\newtheorem{defn}{Definition}[section]
\newtheorem{thm}{Theorem}[section]
\newtheorem{prop}{Proposition}[section]
\newtheorem{lem}{Lemma}[section]
\newtheorem{cor}{Corollary}[section]
\newtheorem{rem}{Remark}[section]
\numberwithin{equation}{section}
\DeclareMathOperator*{\argmin}{argmin}
\newcommand{\mP}{{\mathcal{P}}}
\newcommand{\mh}{\mathcal{H}}
\newcommand{\mf}{\mathcal{F}}
\newcommand{\mprd}{{\mathcal{P}(\Rd)}}
\newcommand{\mptrd}{{\mathcal{P}_2(\Rd)}}
\newcommand{\mpdtard}{{\mathcal{P}_{2}^a(\Rd)}}
\newcommand{\mw}{\mathcal{W}}
\newcommand{\rhote}{\rho_{\tau}^{\varepsilon}}
\newcommand{\rhotne}{\rho_{\tau,\varepsilon}^{n}}
\newcommand{\rhotnne}{\rho_{\tau,\varepsilon}^{n+1}}
\newcommand{\R}{\mathbb{R}}
\newcommand{\Rd}{{\mathbb{R}^{d}}}
\newcommand{\Rn}{{\mathbb{R}^{n}}}
\newcommand{\Rdd}{{\mathbb{R}^{2d}}}
\newcommand{\rhoe}{\rho^\varepsilon}
\newcommand{\supp}{\mathrm{supp}}
\def\XXint#1#2#3{{\setbox0=\hbox{$#1{#2#3}{\int}$}
  \vcenter{\hbox{$#2#3$}}\kern-.5\wd0}}
\title{Nonlocal approximation of nonlinear diffusion equations
}
\date{}
\begin{document}

\author{José Antonio Carrillo \and Antonio Esposito \and Jeremy Sheung-Him Wu}
\address{J. A. Carrillo, A. Esposito -- Mathematical Institute, University of Oxford, Woodstock Road, Oxford, OX2 6GG, United Kingdom.}

\email{carrillo@maths.ox.ac.uk}
\email{antonio.esposito@maths.ox.ac.uk}

\address{J. S.-H. Wu -- Mathematical Sciences Building, University of California, Los Angeles, CA 90095, United States.}

\email{jeremywu@math.ucla.edu}

\begin{abstract}
We show that degenerate nonlinear diffusion equations can be asymptotically obtained as a limit from 
a class of nonlocal partial differential equations. The nonlocal equations are obtained as gradient flows of interaction-like energies approximating the internal energy. We construct weak solutions as the limit of a (sub)sequence of weak measure solutions by using the Jordan-Kinderlehrer-Otto scheme from the context of $2$-Wasserstein gradient flows. Our strategy allows to cover the porous medium equation, for the general slow diffusion case, extending previous results in the literature. As a byproduct of our analysis, we provide a qualitative particle approximation.
\end{abstract}

\keywords{nonlocal-to-local limit, porous medium equation, diffusion equations, gradient flows, deterministic particle methods}
\subjclass[2020]{35A15, 35Q70, 35D30}




\maketitle

\section{Introduction}

Nonlinear diffusion equations are ubiquitous in several real world applications. They were introduced to analyse gas expansion in a porous medium, groundwater infiltration, and heat conduction in plasmas, to name a few applications in physics. These applications drove the first rigorous mathematical results by Zel'dovich and Kompaneets in \cite{zel1950towards} and Barenblatt in \cite{barenblatt1953one} regarding important particular weak solutions of nonlinear diffusion equations with homogeneous nonlinearity. The general filtration equation was then first developed in \cite{Ka74}. The use of these equations in oil recovery software is extensive nowadays. Another source of applications of this family of equations arises from population models in mathematical biology: ecological models \cite{boi,TBL06,BCM07} derived from probabilistic interpretations \cite{Oelschlger1990LargeSO,FigPhi2008}, volume effect in Keller-Segel type models \cite{painter2002volume,gamba2003,calvez2006volume}, volume exclusion in cell-cell adhesion models \cite{CarrilloMurakawaCellAdhesion,CCY19}, and many others.

Although a rigorous mathematical theory has been extensively provided over the years \cite{Vaz07, otto,CJMTU}, there are particular aspects of renewed interest in view of novel applications as well as advances in mathematics. For instance, their derivation from interacting particles, with a distinction between deterministic and stochastic methods, has recently attracted attention for its implications in derivation of models in mathematical biology \cite{CarrilloMurakawaCellAdhesion} and data science \cite{blob_weighted_craig}. We take advantage of the gradient flow structure of nonlinear diffusions \cite{otto} to connect with nonlocal interaction equations. In fact, we rigorously derive particle approximations of nonlinear diffusions from these variational considerations by approximating their energy functional completing the approach started in \cite{Patacchini_blob19}. 

For ease of presentation, let us focus on more standard diffusion equations. Let $m\ge1$ and consider the equation
\[
\partial_t\rho = \Delta \rho^m,
\]
which is better known as the heat equation for $m=1$, or the porous medium equation (PME) in the case $m>1$. A comprehensive study of the above PDE can be found, e.g., in the book of V\'azquez, \cite{Vaz07}. Owing to the advances in optimal transport theory, \cite{V1,V2,S}, starting from the seminal works of Jordan, Kinderlehrer, and Otto, \cite{JKO98,otto}, such diffusion equations are known to be $2$-Wasserstein gradient flows for a specific choice of the energy functional. More precisely, the previous equation can be written as  
\begin{equation}
\label{eq:gf-general}
\left\{
\begin{array}{c}
    \partial_t\rho+\nabla\cdot\left(\rho v\right)=0,\\[2mm]
    v=-\nabla\frac{\delta \mh_m}{\delta\rho},
\end{array}
\right.
\end{equation}
being $\frac{\delta \mh_m}{\delta\rho}$ the first variation of the energy functional
\begin{equation}\label{eq:diffusion_energy}
    \mh_m[\rho]=
    \begin{cases}
    \int_\Rd\rho(x)\log\rho(x)dx \qquad &m=1\\[2mm]
    \frac{1}{m-1}\int_\Rd\rho^m(x)dx &m>1
    \end{cases}.
\end{equation}
In \cite{JKO98} the equation of interest was the linear Fokker--Planck equation, while Otto focused on the porous medium equation in \cite{otto}. Afterwards, a $2$-Wasserstein gradient flow approach has been extended to other PDEs, in particular those modelling nonlocal interaction, \cite{CMcCV03,CMcCV06,AGS,CDFFLS}. The latter equation is of the form \eqref{eq:gf-general} with $v=-\nabla\frac{\delta \mw}{\delta\rho}$ and
\begin{equation}\label{eq:interaction_energy}
    \mw[\rho]=\frac{1}{2}\int_\Rd W*\rho(x)d\rho(x).
\end{equation}

Recent works in the literature show a rigorous and fascinating connection between the two energies above for $m>1$ in \eqref{eq:diffusion_energy} and the corresponding dynamics, by means of gradient flow techniques, c.f.~\cite{Patacchini_blob19,BE22}. More precisely, exploiting the so-called \textit{blob method} developed in \cite{Craig_blob2016}, one can notice already at a formal level that an appropriate regularisation of $\mh_m$ transforms a diffusion equation (which is local) into an interaction PDE (which is nonlocal) by choosing a delocalising kernel. For simplicity, let $m=2$ and consider a standard family of non-negative radial mollifiers $V_\varepsilon(x) = \varepsilon^{-d}V_1(x/\varepsilon)$ for $\varepsilon>0$ on $\Rd$.  Using the commutativity of convolution with even functions such as $V_\varepsilon$, it is indeed not difficult to see
\[
\mh_2[V_\varepsilon*\rho]=\int_\Rd (V_\varepsilon*\rho)^2(x)dx=\int_\Rd(V_\varepsilon*V_\varepsilon)*\rho(x)d\rho(x)=\int_\Rd W_\varepsilon*\rho(x)d\rho(x)=2\mw_\varepsilon[\rho],
\]
by setting $W_\varepsilon:=V_\varepsilon*V_\varepsilon$. This observation sheds light on the aforementioned link between local and nonlocal PDEs. As a natural byproduct such a connection provides a rigorous particle approximation for a class of nonlinear diffusion equations. More precisely, this hinges on deterministic approaches for nonlocal interaction equations, since, \textit{particles are solutions}, i.e. the following empirical measure $\rho_t^N$ is a weak solution of~\eqref{eq:gf-general} with $v=-\nabla \frac{\delta \mw}{\delta \rho}$ and $\mw$ as in~\eqref{eq:interaction_energy}
\[
\rho_t^N=\frac{1}{N}\sum_{i=1}^N\delta_{X_i(t)},
\]
where, for any $i=1,\dots,N$, $X_i(t)$ solves the ODE
\[
\dot{X}_i(t)=-\frac{1}{N}\sum_{j}\nabla W(X_i(t)-X_j(t)).
\]
Further details on this aspect can be found, e.g., in \cite{CDFFLS,CCH14}, and in \cite{DFF,DFESPSCH21} in case of systems of nonlocal PDEs. This structure is advantageous for the computational approximation of continuous solutions to~\eqref{eq:gf-general}. The main issue when diffusion is present is that particles do not remain particles. Indeed, if the initial datum is a Dirac delta, we have an immediate smoothing effect, excluding measure solutions. However, numerical evidence of these deterministic particle methods \cite{Patacchini_blob19} show that this can be achieved, as we shall see later on.

In this manuscript, we consider a general class of internal energy functionals $\mf : \mptrd \to (-\infty,+\infty]$ given by
\[
\mf[\rho] := \left\{
\begin{array}{cl}
\int_{\Rd}F(\rho(x))\, dx, 	&\rho\ll \mathrm{Leb}(\Rd)	\\[2mm]
+\infty, 	&\text{otherwise}
\end{array}
\right.,
\]
where we identify the measure $\rho$ with its density $\rho(x)$ if it is absolutely continuous with respect to Lebesgue measure and $\mptrd$ denotes the set of probability measures with finite second order moment. We define the regularised internal energy functional $\mf^\varepsilon : \mP_2(\Rd)\to (-\infty,+\infty]$ given by
\[
\mf^\varepsilon [\rho] := \int_{\Rd}F(V_\varepsilon*\rho(x)) \, dx,
\]
which gives rise to a class of nonlocal PDEs
\begin{equation}\label{eq:nlie-class}
    \partial_t\rho=\nabla\cdot(\rho\nabla V_\varepsilon*F'(V_\varepsilon*\rho)). \tag{NLE}
\end{equation}
The functional $\mf$ includes $\mh_m$, but it is not limited to it, c.f.~\Cref{sec:preliminary}. The reader is invited to verify
\[
\frac{\delta \mf^\varepsilon}{\delta \rho}(\rho) = V_\varepsilon * \left[\frac{\delta\mf}{\delta \rho}(V_\varepsilon * \rho)\right],
\]
which motivates the consideration of~\eqref{eq:nlie-class} as the 2-Wasserstein gradient flow of $\mf^\varepsilon$. Following the strategy proposed in~\cite{BE22}, defining the pressure by $P(x):=x F'(x)-F(x)$, as in~\cite{McC97,CMcCV03,AGS}, we construct weak solutions of the nonlinear diffusion equation
\begin{equation}\label{eq:nonlinear-diffusion}
    \partial_t\rho=\Delta P(\rho)\tag{DE}
\end{equation}
as a limit of a sequence of weak measure solutions of \eqref{eq:nlie-class}, in case $F$ behaves like power laws of porous medium type, for $m>1$.

The blob method for diffusion was first introduced in \cite{Patacchini_blob19} for diffusion equations with the addition of local and nonlocal drifts. Let us mention that a similar approach was used on the previous work~\cite{Craig_blob2016} approximating nonlocal equations with singular kernels by smooth kernels. The authors in \cite{Patacchini_blob19} consider a slightly different regularisation of the internal energy which is better for numerical purposes, see \cite[Eq. (6)]{Patacchini_blob19}. Despite this difference, the gradient flow perspective remains at the forefront of their and our present work. The corresponding nonlocal gradient flow is indeed different from \eqref{eq:nlie-class}, c.f.~\cite[Eq. (8)]{Patacchini_blob19}, but it coincides with ours in case $m=2$ for the energy $\mh_2$. In \cite{Patacchini_blob19}, $\Gamma$-convergence of the regularised energy, as well as that of minimisers is proven for $m\ge1$. The authors show that stability of gradient flows in the $\varepsilon\to0$ can be established for $m\ge2$ using the framework introduced by Sandier and Serfaty in \cite{Sandier_Serfaty2004,Serfaty_gammaconv_2011} and the concept of $\lambda$-gradient flows developed in \cite{AGS}. This strategy requires to verify additional assumptions which are only known to hold in the case $m=2$ for an initial datum with finite second order moment and log-entropy, i.e. $\mh_1[\rho_0]<\infty$. The result for $m=2$ was previously proven in \cite{Lions_MasGallic_2001}, however on a bounded domain with periodic boundary conditions. The blob method in \cite{Patacchini_blob19} is a deterministic particle method for linear and nonlinear diffusion on $\Rd$. Numerical simulations in \cite[Section 6]{Patacchini_blob19} suggest that the particle approximation remains valid even when $\mh_1[\rho_0]=\infty$. Relaxing the condition $\mh_1[\rho_0]<\infty
$ and rigorously proving a quantitative particle approximation is still an open problem, and left for future research.

In the case $m=2$, in the same spirit of \cite{Patacchini_blob19,blob_weighted_craig}, the authors in \cite{BE22} construct weak solutions of the quadratic porous medium equation as a localising limit ($\varepsilon\to0$) of a sequence of weak measure solutions of the nonlocal interaction equation \eqref{eq:nlie-class}, for $F(x)=x^2$. The authors work directly at the level of the (nonlocal) equations by means of a time-discretisation scheme which allows to work with lack of convexity, as for instance in the case of cross-diffusion systems, or even PDEs with no purely gradient flow structure. As in~\cite{Patacchini_blob19}, finite initial log-entropy is required, thus excluding particle approximation. However, simultaneously to~\cite{BE22}, the authors in~\cite{blob_weighted_craig} focus on a weighted (quadratic) porous medium equation which is relevant, e.g. in sampling --- the weight, $\bar\rho$ in their notations, represents a target probability measure to be approximated from specific samples drawn from it. The blob method is indeed useful to develop a deterministic particle approximation for the weighted porous medium equation, and, as a byproduct, it provides a way to quantize a target $\bar\rho$ in the long-time behaviour. We stress that also in this work it is essential to assume $\mh_1[\rho_0]<\infty$, however using again $\lambda$-convexity of the regularised energy one can achieve a rigorous particle approximation as consequence of $\lambda$-stability (or contractivity) of Wasserstein gradient flows, as in \cite{AGS}. This means one can achieve, so far, a qualitative result, as the initial datum needs to be approximated fast enough, c.f.~\cite[Theorem 1.4]{blob_weighted_craig}. To the best of our knowledge, a quantitative result has not been achieved yet in more than one dimension. Still in one space dimension, the authors in \cite{Daneri_Radici_Runa_JDE} introduce a deterministic particle approximation for aggregation-diffusion equations, including the porous medium equation for the subquadratic ($1<m<2$) and superquadratic ($m>2$) cases. This approach, however, is limited to one space dimension. All the previous three works do not make use of gradient flow techniques. Indeed, other attempts for a particle method have been proposed in the literature. Let us mention two simultaneous numerical methods for linear diffusion $(m=1)$ \cite{Degond_Mustieles90,Russo90}. In one dimension and for nonlinear diffusions, there are other numerical methods based on the PDE satisfied by the transporting maps, see \cite{GosseToscani06,CM09,CRW16}. A nice survey of most of the available numerical methods for these families of equations can be found in \cite{CMWreview}.

\smallskip
Further related to particle methods, we mention the seminal paper by Oelschl\"{a}ger, \cite{Ol}, where a stochastic particle approximation is proven for classical and positive solutions of the quadratic porous medium equation in $\Rd$, and for weak solutions in one dimension, and the recent results in \cite{CDHJ21} for systems. In \cite{FigPhi2008} very weak solutions of the viscous porous medium equation ($m>1$) are studied as a limit of a sequence of distributions of the solutions to nonlinear stochastic differential equations generalising previous results \cite{oelschlaeger2001,morale2005interacting,FigPhi2008}. In \cite{Philipowski2007} strong $L^1$-solutions, c.f.~\cite{Vazquez92_PM_introduction}, of the quadratic porous medium equation are derived from a stochastic mean field interacting particle system with the addition of a vanishing Brownian motion.

\smallskip
Our strategy is different from the aforementioned stochastic approaches as it is based on an optimal transport approach avoiding the addition of higher regularity induced by the (vanishing) viscosity method. We consider a time-discretisation of \eqref{eq:nlie-class} à la Jordan-Kinderlehrer-Otto (JKO), c.f.~\cite{JKO98}. This method provides uniform bounds on the approximating sequence in terms of the associated energy and second order moments. Although the sequence solving \eqref{eq:nlie-class} is only a measure, we are able to prove strong $L^m$-compactness of a smoother sequence of solutions for the $\varepsilon\to0$ limit by using the so-called \textit{flow interchange} technique, c.f.~\cite{MMCS}. More precisely, one of our main contributions is to construct weak solutions of 
    \begin{equation}\label{eq:pme}
    \partial_t\rho=\Delta(\rho^m). \tag{PME}
\end{equation}
as a subsequential $\varepsilon\to0$ limit of weak measure solutions to
\begin{equation}\label{eq:nlie}
    \partial_t\rhoe=\frac{m}{m-1}\nabla\cdot(\rhoe\nabla V_\varepsilon*(V_\varepsilon*\rhoe)^{m-1}), \tag{NLE-m}
\end{equation}
for all $m>1$. The same result is proven also for \eqref{eq:nlie-class} and \eqref{eq:nonlinear-diffusion}. In particular, this extends~\cite{BE22} to the case $m>2$, which is not trivial in view of the nonlinearities involved, and to a class of general nonlinear diffusion function. In~\cite{Patacchini_blob19}, their gradient flow convergence result for $m>2$ was conditional on a uniform $BV$ bound for $\rho^\varepsilon$ while we make no such assumptions here. Furthermore, we are also able to treat the case $1<m<2$ which is more challenging due to the lack of regularity at zero.

As a byproduct of our analysis we obtain an existence result for nonlocal diffusion equations related to a nonlocal internal energy functional. In particular, we are able to construct weak solutions to~\eqref{eq:nlie} via the JKO scheme for $m>1$. While this may not be surprising, this is the first result in this direction to the best of our knowledge. We also provide a particle approximation for \eqref{eq:nlie-class} in case $F$ behaves like power laws, for $m>1$. This result is purely qualitative, and quantitative estimates are not proven. Finally, we stress that the strategy we use to construct weak solutions does not require convexity of the internal energy, thus allowing to extend this method to non-convex energies, e.g. nonlinear cross-diffusion systems, see \cite{BE22}. We leave the extension to systems for a future work as it deserves a deeper analysis.

The case $m=1$, i.e. linear diffusion, is not completely covered in our theory, due to the lack of control on the compactness near the logarithmic singularity in the gradient flow approach. More precisely, our strategy does provide an approximating scheme, validated numerically in~\cite{Patacchini_blob19}, but we are not able at this stage to identify the limit as solution of the heat equation. Indeed, the logarithmic singularity cannot be coped with for the case $m=1$ when the mollifier $V_1$ is compactly supported. This is indeed one of the reasons we did not assume $V_1$ is compactly supported in the case $m\ge2$. Similar difficulties are found for the Landau equation \cite{Landau} in plasma physics, for which efficient deterministic particle methods preserving all the properties of the Landau equation at the discrete level were introduced in \cite{CHWW20} using the same strategy as in this work. Moreover such an approximated Landau equation has been analytically studied in \cite{CDDW20,CDW22} showing the existence of solutions for the approximated problems where $V_1(x)=e^{-|x|}$ with an appropriate mollification at the origin. The particular non-compactly supported kernel is crucial in the detailed estimates performed in \cite{CDDW20}. Dealing with the logarithmic singularity in these problems is a challenging open problem.

\subsection*{Structure of the manuscript} 
\Cref{sec:preliminary} sets the assumptions, notations, and definitions we use in this paper. At the end of~\Cref{sec:preliminary}, we state the precise results obtained once the appropriate notions of solutions are introduced. \Cref{sec:nlie} focuses on the construction of weak solutions $\rho^\varepsilon$ to~\eqref{eq:nlie-class} (c.f.~\Cref{thm:exist_nlie-class}) based on the JKO scheme~\cite{JKO98}. \Cref{sec:compact} discusses the strong compactness criteria used to construct a limit $\rho$ (which is the candidate weak solution to~\eqref{eq:nonlinear-diffusion}) from the sequence $\rho^\varepsilon$. \Cref{sec:nonlocal-to-local-limit} verifies that the limit $\rho$ is a weak solution to~\eqref{eq:nonlinear-diffusion} (c.f.~\Cref{thm:exist_nonlinear_diffusion}) by passing to the limit $\varepsilon\to 0$ from~\eqref{eq:nlie-class}. In~\Cref{sec:convexity}, we sketch the ideas behind the proofs of~\Cref{thm:uniqueness}, which gives conditions for uniqueness of solutions to~\eqref{eq:nlie-class}, and~\Cref{cor:particle_approx}, which provides a particle approximation to~\eqref{eq:nonlinear-diffusion}. Finally, \Cref{sec:proofs} collects various technical results which, possibly with minor adaptations, already exist in the literature.

\section{Preliminaries and results}\label{sec:preliminary}

The mollifying sequence is generated by $V_\varepsilon(x) = \varepsilon^{-d}V_1(x/\varepsilon)$ for $\varepsilon>0$. We assume that the generating function $V_1$ satisfies
\begin{enumerate}[label=\textbf{(V)}]
	\item $V_1\in C_b(\Rd;[0,+\infty))\cap C^1(\Rd)$, $\|V_1\|_{L^1}=1$, $V_1(x)=V_1(-x)$, $\int_\Rd|x|^2V_1(x)\,dx<+\infty$, $|\nabla V_1|\in L^1(\Rd)$, and $|\nabla V_1(x)|\le C(1+|x|)$.  	\label{ass:v1}
\end{enumerate}

Depending on the results we prove, we assume the function $F:[0,+\infty) \to (-\infty,+\infty]$ satisfies some combination of the following assumptions:
\begin{enumerate}[label=\textbf{(F\arabic*)}]
	\item \label{ass:AGS-F} $F$ is a proper, convex, and lower semicontinuous function such that
	\[
	F(0) = 0,\quad \liminf_{s\uparrow+\infty}\frac{F(s)}{s}= +\infty, \quad \liminf_{s\downarrow 0}\frac{F(s)}{s^\alpha}>-\infty, \quad \text{for some }\alpha > \frac{d}{d+2}.
	\]
	\item \label{ass:diff-F} $F \in  C^1([0,+\infty))$.
	\item \label{ass:diff-reg} $F \in C([0,+\infty))\cap C^2((0,+\infty))$.
\end{enumerate}
\begin{enumerate}[label=\textbf{($\textbf{F}_m$)}]
	\item \label{ass:F-m} There exist $c_1, \, c_2>0$ and $m\ge1$ such that $c_1 x^{m-2} \le F''(x) \le c_2 x^{m-2}$ for all $x> 0$.
\end{enumerate}
\begin{rem}[Comments on the assumptions]
\label{rem:F-assumptions}
\ref{ass:AGS-F} is lifted directly from~\cite[Example 9.3.6]{AGS} so that $\mf$ enjoys certain properties; it is well-defined and the associated JKO scheme is well-posed c.f.~\cite{JKO98}. 

For the reader's convenience, we observe the condition $\liminf_{s\downarrow 0}\frac{F(s)}{s^\alpha}>-\infty$ for some $\alpha>\frac{d}{d+2}$ ensures (c.f.~\cite[Remark 9.3.7]{AGS} and~\Cref{lem:lower-bound-F}) that $F^-(\rho) \in L^1(\Rd)$ whenever $\rho \in \mP_2(\Rd)$ is absolutely continuous with respect to Lebesgue measure. In particular, on any sublevel subset $\{\rho \in \mptrd \, | \, m_2(\rho) \le C\}$, the functional $\mf$ is uniformly bounded below.

The superlinear growth (c.f.~\cite[Remark 9.3.8]{AGS}) and convexity ensure that $\mf$ is lower semicontinuous in $\mP_1(\Rd)$.

Assumption \ref{ass:diff-reg} mainly refers to energies lacking regularity at the origin as in the case $F(x)=x\log x$. We stress that \ref{ass:diff-F} is used to construct solutions to~\eqref{eq:nlie-class} in~\Cref{sec:nonlocal-to-local-limit}, however it is not used to derive the compactness estimates in~\Cref{sec:compact}. Conversely, \ref{ass:diff-reg} is used for the compactness estimates in~\Cref{sec:compact} but is not assumed to construct solutions to~\eqref{eq:nlie-class}. The motivating examples which satisfy all of~\ref{ass:AGS-F}, \ref{ass:diff-F}, \ref{ass:diff-reg}, and~\ref{ass:F-m} are power laws $F(x) = \frac{1}{m-1}x^m$ for $m>1$.

A further discussion can be found after the statements of~\Cref{thm:exist_nlie-class} and~\Cref{thm:exist_nonlinear_diffusion}.
\end{rem}

Throughout the manuscript we will denote by $\mP(\Rd)$ the set of probability measures on $\Rd$, for $d\in\mathbb{N}$, and by $\mP_p(\Rd):=\{\rho\in\mP(\Rd):m_p(\rho)<+\infty\}$, being
$m_p(\rho):=\int_\Rd|x|^p\,d\rho(x)$ the $p^{\mathrm{th}}$-order moment of $\rho$, for $1\le p<\infty$. We shall use $\mP_p^a(\Rd)$ for elements in $\mP_p(\Rd)$ which are absolutely continuous with respect to the Lebesgue measure. For $p=2$, the $2$-Wasserstein distance between $\mu_1,\mu_2\in \mptrd$ is
\begin{equation}\label{wass}
d_W^2(\mu_1,\mu_2):=\min_{\gamma\in\Gamma(\mu_1,\mu_2)}\left\{\int_{\Rdd}|x-y|^2\,d\gamma(x,y)\right\},
\end{equation}
where $\Gamma(\mu_1,\mu_2)$ is the class of all transport plans between $\mu_1$ and $\mu_2$, that is the class of measures $\gamma\in\mP(\Rdd)$ such that, denoting by $\pi_i$ the projection operator on the $i$-th component of the product space, the marginality condition
$$
(\pi_i)_{\#}\gamma=\mu_i \quad \mbox{for}\ i=1,2
$$
is satisfied. In the expression above, marginals are the push-forward of $\gamma$ through $\pi_i$. For a measure $\rho\in\mP(\Rd)$ and a Borel map $T:\Rd\to\Rn$, $n\in\mathbb{N}$, the push-forward of $\rho$ through $T$ is defined by
$$
 \int_{\Rn}f(y)\,dT_{\#}\rho(y)=\int_{\Rd}f(T(x))\,d\rho(x) \qquad \mbox{for all Borel functions $f$ on}\ \Rn.
$$
Setting $\Gamma_0(\mu_1,\mu_2)$ as the class of optimal plans, i.e. minimizers of \eqref{wass}, the $2$-Wasserstein distance can be written as
$$
d_W^2(\mu_1,\mu_2)=\int_{\Rdd}|x-y|^2\,d\gamma(x,y), \qquad \gamma\in\Gamma_0(\mu_1,\mu_2).
$$
We denote the $1$-Wasserstein distance with $d_1$ and it is defined by
\begin{align}\label{eq:1wass}
    d_1(\mu_1,\mu_2):=\min_{\gamma\in\Gamma(\mu_1,\mu_2)}\left\{\int_{\Rdd}|x-y|\,d\gamma(x,y)\right\}.
\end{align}
We refer the reader to \cite{AGS,V2,S} for further details on optimal transport theory and Wasserstein spaces. 

Below we recall the concepts of solutions used throughout the manuscript, distinguishing between measure and weak solutions. 

\begin{defn}[Weak measure solution to \eqref{eq:nlie-class}]\label{def:weak-meas-sol-F}
Suppose $F$ satisfies~\ref{ass:AGS-F} and~\ref{ass:diff-F}. An absolutely continuous curve $\rho^\varepsilon:[0,T]\to\mptrd$, mapping $t\in[0,T]\mapsto\rho_t^\varepsilon\in\mptrd$, is a weak measure solution to \eqref{eq:nlie-class} if, for every $\varphi\in C^1_c(\Rd)$ and any $t\in[0,T]$, it holds
\begin{equation}\label{eq:weak-form-F}
\int_\Rd\varphi(x)d\rho_t^\varepsilon(x)\!-\!\int_\Rd\varphi(x)d\rho_0(x)=-\int_0^t\int_\Rd\nabla\varphi(x)\cdot[\nabla V_\varepsilon*F'(V_\varepsilon*\rho_s)](x)d\rho_s^\varepsilon(x)ds.
\end{equation}
\end{defn}

\begin{defn}[Weak measure solution to \eqref{eq:nlie}]\label{def:weak-meas-sol}
An absolutely continuous curve $\rho^\varepsilon:[0,T]\to\mptrd$, mapping $t\in[0,T]\mapsto\rho_t^\varepsilon\in\mptrd$, is a weak measure solution to \eqref{eq:nlie} for $m>1$ if, for every $\varphi\in C^1_c(\Rd)$ and any $t\in[0,T]$, it holds
\begin{equation}\label{eq:weak-form}
\int_\Rd\varphi(x)d\rho_t^\varepsilon(x)\!-\!\int_\Rd\varphi(x)d\rho_0(x)=-\frac{m}{m-1}\int_0^t\int_\Rd\nabla\varphi(x)\cdot[\nabla V_\varepsilon*(V_\varepsilon*\rho_s)^{m-1}](x)d\rho_s^\varepsilon(x)ds.
\end{equation}
\end{defn}

\begin{rem}
By considering fixed $\varepsilon>0$ and the corresponding scaling for $V_1$ satisfying~\ref{ass:v1}, the driving velocity field satisfies
\begin{equation}
\begin{aligned}
\int_0^T\int_\Rd|[\nabla V_\varepsilon*(V_\varepsilon*\rho_t^\varepsilon)^{m-1}](x)|d\rho_t^\varepsilon(x)dt&\le\int_0^T\iint_\Rdd|\nabla V_\varepsilon(x-y)|(V_\varepsilon*\rho_t^\varepsilon)^{m-1}(y)\,dy\,d\rho_t^\varepsilon(x)\,dt\\
&= \int_0^T \int_{\Rd} (|\nabla V_\varepsilon| * \rho_t^\varepsilon)(y) (V_\varepsilon * \rho_t^\varepsilon)^{m-1}(y)dy \, dt\\
&\le \int_0^T \|V_\varepsilon * \rho_t^\varepsilon\|_{L^\infty}^{m-1}\left(\int_{\Rd} (|\nabla V_\varepsilon| * \rho_t^\varepsilon)(y) dy\right) \, dt \\
&\le \varepsilon^{-md}\|V_1\|_{L^\infty} \int_0^T \||\nabla V_\varepsilon| * \rho_t^\varepsilon\|_{L^1}dt \\
&\le \varepsilon^{-md}\|V_1\|_{L^\infty} T \|\nabla V_\varepsilon\|_{L^1}   \\
&= \frac{T}{\varepsilon^{md+1}}\|V_1\|_{L^\infty}\|\nabla V_1\|_{L^1} < \infty.
\end{aligned}
\end{equation}
\cite[Lemma 8.2.1]{AGS} provides the existence of a continuous representative for distributional solutions of continuity equations with velocity fields in $L^1([0,T];L^1(\rho_t))$. This justifies ~\Cref{def:weak-meas-sol} in the sense that the right-hand side of~\eqref{eq:weak-form} is well-defined. Note that a similar computation holds true for the velocity field in \eqref{eq:nlie-class} by applying~\Cref{lem:bdd-comp-F-V}, thus justifying~\Cref{def:weak-meas-sol-F} in the sense that the right-hand side of~\eqref{eq:weak-form-F} is well-defined.
\end{rem}

\begin{defn}[Weak solution to \eqref{eq:pme}]\label{def:sol-pme2}
A weak solution to the Cauchy problem for $m>1$
\begin{equation*}
    \begin{cases}
    \partial_t\rho=\Delta\rho^m\\
    \rho(0,\cdot)=\rho_0
    \end{cases}\tag{PME}
\end{equation*}
on the time interval $[0,T]$ with initial datum $\rho_0\in\mpdtard\cap L^m(\Rd)$ is an absolutely continuous curve $\rho\in C([0,T];\mP_2(\Rd))$ satisfying the following properties:
\begin{enumerate}
    \item for almost every $t\in[0,T]$ the measure $\rho(t)$ has a density with respect to the Lebesgue measure, still denoted by $\rho(t)$, such that $\rho\in L^\infty([0,T];L^m(\Rd))$ and $\nabla \rho^{\frac{m}{2}}\in L^2([0,T];L^2(\Rd))$;
    \item for any $\varphi\in C^1_c(\R^d)$ and all $t\in[0,T]$ it holds
 \begin{align*}
     \int_\Rd\varphi(x)\rho(t,x)\,dx= \int_\Rd\varphi(x)\rho_0(x)\,dx-\int_0^t\int_\Rd \nabla \varphi(x)\cdot \nabla \rho(s,x)^{m}\,dx\,ds;
 \end{align*}
\item $\rho^{-\frac{1}{2}}|\nabla \rho^m|\in L^1([0,T];L^2(\Rd))$.
\end{enumerate}
\end{defn}
\begin{rem}\label{rem:chain_rule_weak_form}
For the sake of clarity we point out the weak solution we obtain initially is
\[
\int_\Rd\varphi(x)\rho(t,x)\,dx=\int_\Rd\varphi(x)\rho_0(x)\,dx-2\int_0^t\int_\Rd\rho(s,x)^{\frac{m}{2}} \nabla \varphi(x)\cdot \nabla \rho(s,x)^{\frac{m}{2}}\,dx\,ds.
\]
The chain rule in Sobolev spaces gives sense to $\nabla\rho^m$ in $L^1(\Rd)$, hence the more standard concept of weak solution for porous medium equation. A further application of the chain rule identifies $\nabla \rho^{m}=\frac{m}{m-1}\rho \nabla \rho^{m-1}$, for $m\ge 2$; the same result, however, does not hold in the case $1<m<2$. Further details are provided in the proof of Theorem~\ref{thm:exist_nonlinear_diffusion} in Section~\ref{sec:nonlocal-to-local-limit}. Finally, the last condition in Definition~\ref{def:sol-pme2} is a consequence of uniqueness of very weak solutions, cf.~\cite{DalKen84}, and the theory in~\cite{AGS}.
\end{rem}
Equally, the same concept is extended to general diffusion equations.

\begin{defn}[Weak solution to \eqref{eq:nonlinear-diffusion}]\label{def:sol-nonlinear-diffusion}
Let $F$ satisfy \ref{ass:AGS-F}, \ref{ass:diff-F}, \ref{ass:diff-reg}, and \ref{ass:F-m} for some $m> 1$. A weak solution to the Cauchy problem
\begin{equation*}
    \begin{cases}
    \partial_t\rho=\Delta P(\rho)\\
    \rho(0,\cdot)=\rho_0
    \end{cases}\tag{DE}
\end{equation*}
on the time interval $[0,T]$ with initial datum $\rho_0\in\mpdtard$ such that $\mf[\rho_0]<\infty$ is an absolutely continuous curve $\rho\in C([0,T];\mP_2(\Rd))$ satisfying the following properties:
\begin{enumerate}
    \item for almost every $t\in[0,T]$ the measure $\rho(t)$ has a density with respect to the Lebesgue measure, still denoted by $\rho(t)$, such that $\rho\in L^\infty([0,T];L^m(\Rd))$ and $\nabla \rho^{\frac{m}{2}}\in L^2([0,T];L^2(\Rd))$;
    \item for any $\varphi\in C^1_c(\R^d)$ and all $t\in[0,T]$ it holds
\begin{align*}
        \int_\Rd\varphi(x)\rho(t,x)\,dx= \int_\Rd\varphi(x)\rho_0(x)\,dx-\int_0^t\int_\Rd \nabla \varphi(x)\cdot \nabla P(\rho(s,x))\,dx\,ds;
    \end{align*}
\item $\rho^{-\frac{1}{2}}|\nabla P(\rho)|\in L^1([0,T];L^2(\Rd))$.    
\end{enumerate}
\end{defn}
With the previous definitions, we are ready to state the results of this manuscript.
\begin{thm}[Existence for~\eqref{eq:nlie-class}]
\label{thm:exist_nlie-class}
Fix $\varepsilon>0$ and let $V_1$, the generator of the mollifying sequence $V_\varepsilon(x) = \varepsilon^{-d}V_1(x/\varepsilon)$, satisfy~\ref{ass:v1}. Let $F$ satisfy~\ref{ass:AGS-F} and~\ref{ass:diff-F} and suppose $\mf^\varepsilon[\rho_0]<+\infty$. Then, there exist weak measure solutions $\rho^\varepsilon$ to~\eqref{eq:nlie-class} such that $\rhoe(0) = \rho_0$.
\end{thm}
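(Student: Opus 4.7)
The plan is to implement the classical Jordan--Kinderlehrer--Otto (JKO) minimising movement scheme for the energy $\mf^\varepsilon$, noting that since $\varepsilon>0$ is fixed, the mollification produces a velocity field that is regular enough for straightforward passage to the limit. Concretely, I would set $\rho_\tau^0 = \rho_0$ and iteratively define
\[
\rho_\tau^{n+1} \in \argmin_{\rho \in \mptrd}\Bigl\{\tfrac{1}{2\tau}d_W^2(\rho,\rho_\tau^n) + \mf^\varepsilon[\rho]\Bigr\}.
\]
To prove well-posedness of each step, I would show that $\rho\mapsto \mf^\varepsilon[\rho]$ is sequentially lower semicontinuous with respect to narrow convergence and bounded below on sublevel sets. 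The bound below follows from \ref{ass:v1} and \ref{ass:AGS-F}: since $V_1$ is bounded, $\|V_\varepsilon * \rho\|_\infty \le \varepsilon^{-d}\|V_1\|_\infty$, so $F^-(V_\varepsilon*\rho)$ is globally bounded and integrable provided tightness (finite second moment) is propagated. Lower semicontinuity follows because $\rho_k \rightharpoonup \rho$ narrowly implies $V_\varepsilon * \rho_k \to V_\varepsilon * \rho$ pointwise (even locally uniformly, since $V_\varepsilon \in C_b$), and Fatou together with the convexity and l.s.c. of $F$ finishes the argument. Existence of a minimiser then follows from the direct method using tightness from the quadratic penalty.

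Next I would extract the discrete Euler--Lagrange equation by perturbing $\rho_\tau^{n+1}$ along the flow of a test vector field $\xi \in C_c^\infty(\Rd;\Rd)$. Using \ref{ass:diff-F} and the smoothness of $V_\varepsilon$, the chain rule gives
\[
\frac{d}{ds}\Big|_{s=0} \mf^\varepsilon\bigl[(\mathrm{id}+s\xi)_\#\rho_\tau^{n+1}\bigr] = \int_\Rd \nabla\bigl[V_\varepsilon * F'(V_\varepsilon * \rho_\tau^{n+1})\bigr](x)\cdot \xi(x)\,d\rho_\tau^{n+1}(x),
\]
which, combined with the standard first variation of the transport cost, yields the discrete continuity equation
\[
\int_\Rd \xi\cdot \Bigl(\tfrac{x - T_{n+1}^n(x)}{\tau}\Bigr)\,d\rho_\tau^{n+1}(x) = \int_\Rd \xi \cdot \nabla\bigl[V_\varepsilon * F'(V_\varepsilon * \rho_\tau^{n+1})\bigr]\,d\rho_\tau^{n+1},
\]
where $T_{n+1}^n$ denotes the optimal transport map from $\rho_\tau^{n+1}$ to $\rho_\tau^n$. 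Combined with the energy dissipation inequality $\mf^\varepsilon[\rho_\tau^{n+1}] + \tfrac{1}{2\tau}d_W^2(\rho_\tau^{n+1},\rho_\tau^n) \le \mf^\varepsilon[\rho_\tau^n]$, this furnishes the bound $\sum_{n}\tfrac{1}{\tau}d_W^2(\rho_\tau^{n+1},\rho_\tau^n) \le 2(\mf^\varepsilon[\rho_0] - \inf \mf^\varepsilon) < \infty$ and a uniform-in-$\tau$ second moment estimate via the usual Gronwall argument.

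I would then define the piecewise constant interpolation $\bar\rho_\tau(t) := \rho_\tau^{\lceil t/\tau \rceil}$ and a geodesic interpolation $\rho_\tau(t)$. The telescoping estimate gives $d_W(\rho_\tau(t),\rho_\tau(s)) \le C\sqrt{|t-s|+\tau}$, so a refined Ascoli--Arzelà theorem extracts a subsequence converging, as $\tau \to 0$, to an absolutely continuous curve $\rho^\varepsilon \in C([0,T];\mptrd)$, narrowly at each time. Writing the discrete formulation in weak form and telescoping yields, for $\varphi \in C_c^1(\Rd)$,
\[
\int_\Rd\varphi\,d\bar\rho_\tau(t) - \int_\Rd\varphi\,d\rho_0 = -\int_0^t\!\!\int_\Rd \nabla\varphi\cdot \bigl[\nabla V_\varepsilon * F'(V_\varepsilon * \bar\rho_\tau)\bigr]\,d\bar\rho_\tau\,ds + o_\tau(1),
\]
where the $o_\tau(1)$ accounts for the discrepancy between $T_{n+1}^n$ and the identity via the $d_W^2$ bound together with smoothness of the test field (this is the standard De Giorgi consistency argument).

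The passage to the limit in the nonlinear term is the step that requires care but is not the true obstacle, because $\varepsilon$ is fixed: narrow convergence $\bar\rho_\tau(t) \rightharpoonup \rho^\varepsilon(t)$ combined with $V_\varepsilon, \nabla V_\varepsilon \in C_b(\Rd)$ gives $V_\varepsilon * \bar\rho_\tau(t) \to V_\varepsilon * \rho^\varepsilon(t)$ locally uniformly, and continuity of $F'$ on $[0,\varepsilon^{-d}\|V_1\|_\infty]$ from \ref{ass:diff-F} then gives $F'(V_\varepsilon * \bar\rho_\tau) \to F'(V_\varepsilon * \rho^\varepsilon)$ locally uniformly, and consequently $\nabla V_\varepsilon * F'(V_\varepsilon * \bar\rho_\tau) \to \nabla V_\varepsilon * F'(V_\varepsilon * \rho^\varepsilon)$ uniformly on $\Rd$. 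Since $\varphi$ is compactly supported and the driving field is uniformly bounded (uniformly in $t$ and $\tau$, cf.\ the computation just before~\Cref{def:sol-pme2}), dominated convergence passes to the limit in both integrals and yields the weak formulation~\eqref{eq:weak-form-F}. The main genuine obstacle is verifying lower semicontinuity and coercivity of $\mf^\varepsilon$ under only \ref{ass:AGS-F}, for which the lower bound $F(s) \ge -C(1 + s^\alpha)$ with $\alpha > d/(d+2)$ and the uniform $L^\infty$ bound on $V_\varepsilon * \rho$ are the crucial ingredients; the remaining steps are then a routine adaptation of~\cite{JKO98,AGS} to the nonlocal energy $\mf^\varepsilon$.
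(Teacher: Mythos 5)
Your proposal follows essentially the same route as the paper's proof: the JKO scheme~\eqref{eq:jko} for $\mf^\varepsilon$, well-posedness of each step via the lower bound coming from~\ref{ass:AGS-F} with $\alpha>\frac{d}{d+2}$ (cf.~\Cref{lem:lower-bound-F,lem:one-step-JKO-F}), narrow lower semicontinuity by Fatou, the discrete Euler--Lagrange identity obtained by perturbing the minimiser along $(\mathrm{id}+\sigma\zeta)_\#$, the telescoped discrete weak formulation with the refined Ascoli--Arzel\`a compactness, and the passage $\tau\downarrow 0$ exploiting that for fixed $\varepsilon$ the velocity field is uniformly bounded and $F'(V_\varepsilon*\cdot)$ is uniformly continuous on compacts (the paper's~\Cref{lem:bdd-comp-F-V,cor:comp-f-conv} and the near-/far-field splitting of the convolution against $\nabla V_\varepsilon$).

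Two points in your sketch need repair, though both are fixable with the tools the paper uses. First, you phrase the discrete Euler--Lagrange equation through the optimal transport map $T^n_{n+1}$ from $\rho_\tau^{n+1}$ to $\rho_\tau^n$; but the JKO minimisers here are in general \emph{not} absolutely continuous ($\mf^\varepsilon$ is finite on every element of $\mptrd$, and atomic measures are perfectly admissible --- indeed empirical measures are genuine solutions, cf.~\Cref{cor:particle_approx}), so Brenier's theorem does not apply and such a map need not exist. The paper works with an optimal plan $\gamma_{\tau,\varepsilon}^n\in\Gamma_0(\rho_{\tau,\varepsilon}^n,\rho_{\tau,\varepsilon}^{n+1})$, leading to~\eqref{eq:sol-discreta-sigma}; the rest of your consistency/telescoping argument goes through verbatim with plans. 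Second, integrability of $F^-(V_\varepsilon*\rho)$ over $\Rd$ does not follow from the bound $\|V_\varepsilon*\rho\|_{L^\infty}\le \varepsilon^{-d}\|V_1\|_{L^\infty}$ together with $F(s)\ge -C(1+s^\alpha)$: the constant is not integrable on the whole space. What is needed is the bound $F^-(s)\le c_1 s + c_2 s^\alpha$ combined with the moment-weighted H\"older inequality, which is precisely where $\alpha>\frac{d}{d+2}$ and $m_2(V_\varepsilon*\rho)\le 2\varepsilon^2 m_2(V_1)+2m_2(\rho)$ enter (\Cref{lem:lower-bound-F}); your parenthetical about propagating second moments suggests you have this in mind, but note also that the second-moment bound and the dissipation bound are coupled through this lower bound, and the loop is closed because $\alpha<1$ (an inequality of the form $x_n\le C_1+C_2x_n^\alpha$), not by a plain Gronwall argument. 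Finally, the differentiation of $\mf^\varepsilon$ along push-forward perturbations with only $F\in C^1$ requires a domination argument (the paper uses the mean-value form of Taylor's theorem, \Cref{lem:bdd-comp-F-V}, Egorov and the extended dominated convergence theorem); your one-line chain rule is correct in substance but this is where the actual work lies.
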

\begin{thm}[$\lim_{\varepsilon\to 0}\eqref{eq:nlie-class} = \eqref{eq:nonlinear-diffusion}$]
\label{thm:exist_nonlinear_diffusion}
Let $F$ satisfy~\ref{ass:AGS-F}, \ref{ass:diff-F}, \ref{ass:diff-reg}, and~\ref{ass:F-m} for some $m>1$. Suppose $\rho_0\in \mpdtard$ such that $\mf[\rho_0]<\infty$ and $V_1$ satisfies~\ref{ass:v1}. In the case $1<m<2$, assume further that $\supp V_1 \subset B_R$ for some $R>0$. Let $\rho^\varepsilon$ be a sequence of weak measure solutions to~\eqref{eq:nlie-class} from~\Cref{thm:exist_nlie-class} with initial condition $\rho^\varepsilon(0) = \rho_0$. Then, the sequence $\rho^\varepsilon$ converges narrowly to the unique weak solution $\rho$ of~\eqref{eq:nonlinear-diffusion} as $\varepsilon\downarrow 0$.
\end{thm}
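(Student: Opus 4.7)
\smallskip

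\noindent \textbf{Proof plan.} The plan is to extract a narrow limit from the JKO-based sequence $\rho^\varepsilon$ provided by~\Cref{thm:exist_nlie-class}, upgrade it to a strong limit in a Lebesgue norm adapted to the nonlinearity, pass to the limit in the weak formulation~\eqref{eq:weak-form-F}, and finally invoke uniqueness of weak solutions to~\eqref{eq:nonlinear-diffusion} to promote subsequential convergence to convergence of the entire sequence. The first step is to collect the uniform-in-$\varepsilon$ bounds already built into the JKO construction. Jensen's inequality combined with~\ref{ass:AGS-F} gives $\mf^\varepsilon[\rho_0]\le \mf[\rho_0] < \infty$, and the energy-dissipation inequality of the scheme yields $\sup_t \mf^\varepsilon[\rho^\varepsilon_t]\le \mf[\rho_0]$, $\sup_t m_2(\rho^\varepsilon_t) \le C$, and the H\"older-type estimate $d_W(\rho^\varepsilon_s,\rho^\varepsilon_t) \le C\sqrt{|t-s|}$, uniformly in $\varepsilon$. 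A refined Ascoli--Arzel\`a argument in $(\mptrd, d_W)$ extracts a subsequence converging narrowly and uniformly in $d_W$ to some $\rho \in C([0,T]; \mptrd)$. The scaling of $V_\varepsilon$ and the moment condition in~\ref{ass:v1} give $d_1(\rho^\varepsilon_t, V_\varepsilon * \rho^\varepsilon_t) \to 0$ uniformly in $t$, so the mollified sequence $\sigma^\varepsilon := V_\varepsilon * \rho^\varepsilon$ shares the same narrow limit $\rho$.

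Second, to handle the nonlinearity $F'(\sigma^\varepsilon)$ in the flux I would upgrade the convergence to a strong one. Here the flow interchange technique of~\cite{MMCS} is decisive: differentiating $\mf^\varepsilon$ along the heat flow (the Wasserstein gradient flow of the Boltzmann entropy) at the discrete JKO level and then passing to the continuous limit, while exploiting~\ref{ass:F-m} and~\ref{ass:diff-reg}, should yield the uniform bound
\[
\int_0^T \int_\Rd \bigl|\nabla (\sigma^\varepsilon)^{m/2}\bigr|^2 dx \, dt \le C.
\]
Combined with the $d_W$-equicontinuity of $\sigma^\varepsilon$ (convolution with a probability kernel is a $d_W$-contraction) and an Aubin--Lions-type lemma \`a la Rossi--Savar\'e, we obtain, up to a further subsequence, $(\sigma^\varepsilon)^{m/2} \to \rho^{m/2}$ strongly in $L^2([0,T]\times\Rd)$, hence $\sigma^\varepsilon \to \rho$ strongly in $L^m([0,T]\times \Rd)$ and a.e.

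Third, passing to the limit in~\eqref{eq:weak-form-F} exploits the symmetry of $V_\varepsilon$: the velocity integral can be rewritten, after an integration by parts in the convolution variable, as an integral involving $\nabla P(\sigma^\varepsilon) = \sigma^\varepsilon \nabla F'(\sigma^\varepsilon)$ paired with the mollified test function. The strong $L^m$ convergence of $\sigma^\varepsilon$, the weak $L^2$ convergence of $\nabla (\sigma^\varepsilon)^{m/2}$ inherited from the uniform bound of Step~2, the $C^1$ convergence $V_\varepsilon * \varphi \to \varphi$, and the narrow convergence $\rho^\varepsilon \to \rho$ together identify the limit of the right-hand side as $-\int_0^t \int_\Rd \nabla \varphi \cdot \nabla P(\rho) \, dx \, ds$. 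Uniqueness of weak (equivalently very weak) solutions to~\eqref{eq:nonlinear-diffusion}, cf.~\cite{DalKen84}, then lifts subsequential convergence to convergence of the entire sequence.

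The main obstacle is the sub-quadratic regime $1<m<2$, where $F'(s) \sim s^{m-1}$ has a singular derivative at the origin. The clean identity $\nabla P(\rho) = \tfrac{m}{m-1}\rho \nabla\rho^{m-1}$ valid for $m\ge 2$ must be replaced by the weaker $\nabla P(\rho) = 2\rho^{m/2}\nabla \rho^{m/2}$, which is precisely why the compactness in Step~2 is developed on $(\sigma^\varepsilon)^{m/2}$ rather than on $(\sigma^\varepsilon)^{m-1}$. The additional compact support of $V_1$ imposed in this range is needed to keep the nonlocal pressure flux under control and to propagate the available integrability of $\sigma^\varepsilon$ without acquiring uncontrolled tails near the free boundary.
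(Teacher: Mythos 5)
Your overall architecture — uniform JKO bounds, flow interchange giving the $L^2_tH^1_x$ bound on $(\sigma^\varepsilon)^{m/2}$, Rossi--Savar\'e/Aubin--Lions for strong $L^m$ convergence of $\sigma^\varepsilon=V_\varepsilon*\rho^\varepsilon$, and uniqueness of weak solutions to upgrade subsequential to full convergence — coincides with the paper's (\Cref{prop:en-ineq-mom-bound}, \Cref{lem:h1-bound}, \Cref{prop:strong-convergence-v}). The genuine gap is in your Step~3, which is where the real work lies. Using the symmetry of $V_\varepsilon$, the flux does \emph{not} become $\nabla P(\sigma^\varepsilon)$ paired with a mollified test function: it becomes $\int \nabla F'(\sigma^\varepsilon)\cdot V_\varepsilon*(\rho^\varepsilon\nabla\varphi)\,dx$, and replacing $V_\varepsilon*(\rho^\varepsilon\nabla\varphi)$ by $\nabla\varphi\,\sigma^\varepsilon$ produces exactly the commutator $z^\varepsilon$ of~\eqref{eq:excess-term}, which you never name or estimate. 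Controlling $z^\varepsilon$ against the singular weight $\nabla F'(\sigma^\varepsilon)\sim(\sigma^\varepsilon)^{\frac{m}{2}-1}\nabla(\sigma^\varepsilon)^{\frac{m}{2}}$ is precisely where the hypotheses split: for $m\ge 2$ one needs $z^\varepsilon\to 0$ in $L^m$ (\Cref{lem:convergence-z-0}) together with a H\"older pairing with exponents $\left(m,\frac{2m}{m-2},2\right)$, which is unavailable for $m<2$; for $1<m<2$ the compact support of $V_1$ is used to obtain the pointwise bound $|z^\varepsilon|\le R\varepsilon\|D^2\varphi\|_{L^\infty}\,\sigma^\varepsilon$, which cancels the negative power $(\sigma^\varepsilon)^{\frac{m}{2}-1}$ and reduces the error to $\varepsilon\,\|(\sigma^\varepsilon)^{\frac{m}{2}}\|_{L^2}\|\nabla(\sigma^\varepsilon)^{\frac{m}{2}}\|_{L^2}$. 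Your explanation of the role of compact support ("uncontrolled tails near the free boundary") misidentifies this mechanism.

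Second, for $1<m<2$ the integration by parts you invoke "in the convolution variable" itself requires justification: by~\ref{ass:F-m}, $F''$ blows up at the origin, so $F'(\sigma^\varepsilon)$ (equivalently $(\sigma^\varepsilon)^{m-1}$) is not differentiable across the zero set of $\sigma^\varepsilon$ — a set that is genuinely nonempty precisely because $V_1$ is compactly supported in this regime. The paper devotes \Cref{lem:ibp} to this point, carrying out the integration by parts via difference quotients and restricting to the positivity set $A_r^\varepsilon=\{\sigma_r^\varepsilon>0\}$ with the extended dominated convergence theorem. Without these two ingredients — the commutator estimate adapted to each range of $m$, and the justified integration by parts on $A_r^\varepsilon$ — your Step~3 does not close; the remaining elements of your plan (weak--strong pairing for the main term, the chain-rule identification of $\nabla P(\rho)$, and uniqueness via Dahlberg--Kenig/Brezis--Crandall) do match the paper's argument.
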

\begin{rem}
    In the case $F$ is a power law given by $F(x) = \frac{1}{m-1}|x|^m$ for some $m>1$, all of~\ref{ass:AGS-F}, \ref{ass:diff-F}, \ref{ass:diff-reg}, and~\ref{ass:F-m} are fulfilled; Theorem~\ref{thm:exist_nonlinear_diffusion} holds for \eqref{eq:pme}.

    At first glance, our compactness estimates only show that a subsequence of $\rho^\varepsilon$ converges narrowly to $\rho$. However, we appeal to~\cite{Brezis_Crandall_79,DalKen84,Vaz07} which imply that weak solutions to~\eqref{eq:nonlinear-diffusion} and~\eqref{eq:pme} are unique. Hence, the entire sequence converges.
\end{rem}
In~\Cref{thm:exist_nlie-class}, the construction of weak measure solutions $\rho^\varepsilon$ to~\eqref{eq:nlie-class} leverages the JKO scheme~\cite{JKO98}. Just at the level of the JKO scheme, only~\ref{ass:AGS-F} is required (c.f.~\Cref{prop:en-ineq-mom-bound}) for which all of the regularised R\'enyi entropies $\mh_m^\varepsilon[\rho] = \mh_m[V_\varepsilon*\rho]$ for any $m\ge 1$ are admissible. In fact, assumption~\ref{ass:diff-F} enters only when verifying $\rho^\varepsilon$ is a weak measure solution of~\eqref{eq:nlie-class} (c.f.~\Cref{sec:nlie}). This excludes $F(x) = x\log x$, but all the power laws for $m>1$ are permitted in this consistency result. Moreover, the assumption that~\ref{ass:F-m} holds for some $m>1$ in~\Cref{thm:exist_nonlinear_diffusion} is only used to verify that the limit $\rho$ is a weak solution to~\eqref{eq:nonlinear-diffusion}. On the other hand, the construction of the limit $\rho$ from the sequence $\rho^\varepsilon$ allows to relax assumption~\ref{ass:F-m} to any $m\ge 1$ provided the initial condition $\rho^\varepsilon(0) = \rho_0$ belongs in $L^m\cap L\log L$ (c.f.~\Cref{sec:compact}), thus including all of the regularised R\'enyi entropies $\mh_m^\varepsilon$. To summarise in the specific case of $\mf^\varepsilon =\mh_m^\varepsilon$ as the regularised energy, the construction of curves $\rho^\varepsilon$ and $\rho$ without consideration of the respective equations~\eqref{eq:nlie} and~\eqref{eq:pme} can be done for any $m\ge 1$. However, our technique requires $m>1$ to verify that $\rho^\varepsilon$ is a weak measure solution of~\eqref{eq:nlie}. Moreover, when $1<m<2$, we insist that the generator, $V_1$ of the mollifying sequence, satisfies~\ref{ass:v1} and has compact support (in the case $m\ge 2$ only~\ref{ass:v1} is required). It is certainly interesting to investigate how we can close this gap to $m=1$ and we leave this direction for future research.

In~\Cref{thm:exist_nonlinear_diffusion} we prove that the solutions $\rho^\varepsilon$ to~\eqref{eq:nlie-class} coming from the construction in~\Cref{thm:exist_nlie-class} converge to $\rho$, the unique weak solution of~\eqref{eq:nonlinear-diffusion}. It is natural to ask whether other solutions $\tilde{\rho}^\varepsilon$ to~\eqref{eq:nlie-class} (not necessarily those constructed via the JKO scheme c.f.~\Cref{sec:nlie}) also converge to $\rho$. Actually, under additional assumptions on the nonlinearity $F$ and the mollifier $V$, the sequence $\rho^\varepsilon$ is unique.
\begin{thm}[Uniqueness of solutions to~\eqref{eq:nlie-class}]
\label{thm:uniqueness}
Let $F$ satisfy~\ref{ass:AGS-F}, \ref{ass:diff-F}, \ref{ass:diff-reg}, and~\ref{ass:F-m} for some $m>1$. Assume $V_1$ satisfies~\ref{ass:v1}, $V_1\in C^2(\Rd)$, and $D^2V_1\in L^\infty(\Rd)$. Then, the weak measure solution $\rho^\varepsilon$ in~\Cref{thm:exist_nlie-class} is unique among absolutely continuous curves $\rho:[0,T]\to \mP_2(\R^d)$ satisfying~\eqref{eq:nlie-class} in the sense of Definition~\ref{def:weak-meas-sol-F}.
\end{thm}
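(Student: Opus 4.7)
The plan is to establish a Dobrushin-type stability estimate in the $2$-Wasserstein distance $d_W$. Given two weak measure solutions $\rho^1, \rho^2 \in \mathrm{AC}([0,T];\mptrd)$ of \eqref{eq:nlie-class} sharing the initial datum $\rho_0$, I would aim for an inequality of the form
\[
\frac{d}{dt}\, d_W^2(\rho^1_t, \rho^2_t) \le C\, d_W^2(\rho^1_t, \rho^2_t) \qquad \text{for a.e.\ } t\in[0,T],
\]
from which Gr\"onwall's lemma together with $d_W(\rho^1_0, \rho^2_0) = 0$ immediately yields $\rho^1_t \equiv \rho^2_t$ on $[0,T]$.

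The argument reduces to two estimates for the nonlocal velocity field $b_\varepsilon[\rho] := -\nabla V_\varepsilon * F'(V_\varepsilon * \rho)$, both enabled by the extra assumptions $V_1 \in C^2$, $D^2 V_1 \in L^\infty$, and the growth $F'(s) \le C s^{m-1}$ coming from \ref{ass:F-m}. \emph{(i) Uniform Lipschitzness in $x$}: from $|\nabla V_\varepsilon(z_1) - \nabla V_\varepsilon(z_2)| \le \|D^2 V_\varepsilon\|_{L^\infty}\,|z_1-z_2|$ one obtains
\[
|b_\varepsilon[\rho](x_1) - b_\varepsilon[\rho](x_2)| \le \|D^2 V_\varepsilon\|_{L^\infty}\,|x_1-x_2|\,\|F'(V_\varepsilon*\rho)\|_{L^1(\Rd)},
\]
and the right-hand norm is controlled uniformly in $\rho\in\mptrd$ by combining $F'(s)\le Cs^{m-1}$ with the Young bounds $\|V_\varepsilon*\rho\|_{L^p}\le\|V_\varepsilon\|_{L^p}$ for $p\in[1,\infty]$ and the second-moment bound on $\rho$. \emph{(ii) Lipschitz stability in $\rho$}: the mean value identity
\[
F'(V_\varepsilon*\rho^1) - F'(V_\varepsilon*\rho^2) = \left(\int_0^1 F''\bigl((1-\theta)V_\varepsilon*\rho^2 + \theta V_\varepsilon*\rho^1\bigr)\,d\theta\right) V_\varepsilon*(\rho^1-\rho^2),
\]
combined with Kantorovich--Rubinstein, $\|V_\varepsilon*(\rho^1-\rho^2)\|_{L^\infty} \le \mathrm{Lip}(V_\varepsilon)\,d_1(\rho^1,\rho^2) \le \mathrm{Lip}(V_\varepsilon)\,d_W(\rho^1,\rho^2)$, should give $\|b_\varepsilon[\rho^1]-b_\varepsilon[\rho^2]\|_{L^\infty} \le C_\varepsilon\, d_W(\rho^1,\rho^2)$, at least when $m\ge 2$, since the $\theta$-integral of $F''$ is bounded on the range of $V_\varepsilon*\rho^i \le \|V_\varepsilon\|_{L^\infty}$.

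Given (i)--(ii), pick an optimal plan $\gamma_t \in \Gamma_0(\rho^1_t, \rho^2_t)$ and use the weak formulation of \eqref{eq:nlie-class} (Definition~\ref{def:weak-meas-sol-F}) together with a standard approximation of $|x-y|^2$ by smooth test functions to derive
\[
\frac{1}{2}\frac{d}{dt}\,d_W^2(\rho^1_t,\rho^2_t) \le \int_{\Rdd}\langle x-y,\, b_\varepsilon[\rho^1_t](x) - b_\varepsilon[\rho^2_t](y)\rangle\, d\gamma_t(x,y).
\]
Splitting the integrand as $(b_\varepsilon[\rho^1_t](x) - b_\varepsilon[\rho^1_t](y)) + (b_\varepsilon[\rho^1_t](y) - b_\varepsilon[\rho^2_t](y))$, the first summand is bounded by $L\, d_W^2$ via (i), and the second by $\|b_\varepsilon[\rho^1_t] - b_\varepsilon[\rho^2_t]\|_{L^\infty}\, d_1(\rho^1_t,\rho^2_t) \le C_\varepsilon\, d_W^2$ via Cauchy--Schwarz and (ii), giving the desired Gr\"onwall inequality.

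The hard part is property (ii) in the regime $1<m<2$, where $F''(s)\le c_2 s^{m-2}$ is singular at the origin. The naive estimate only gives a H\"older bound $\|F'(V_\varepsilon*\rho^1)-F'(V_\varepsilon*\rho^2)\|_{L^\infty} \le C\|V_\varepsilon*(\rho^1-\rho^2)\|_{L^\infty}^{m-1}$, leading to $\tfrac{d}{dt}d_W^2 \le C\,d_W^2 + C\,d_W^m$ with $m\in(1,2)$, which fails the Osgood criterion and does not rule out non-uniqueness from $d_W(0)=0$. To close this gap one should either exploit the sharper identity $|F'(a)-F'(b)| \le C|a^{m-1}-b^{m-1}|$ together with the joint $L^1\cap L^\infty$ integrability of $V_\varepsilon*\rho^i$ and an interpolation argument, or leverage the $(-C_\varepsilon)$-convexity of $\mf^\varepsilon$ along generalized geodesics (a consequence of $D^2 V_\varepsilon \in L^\infty$ and the pointwise bound $F'(V_\varepsilon*\rho)\le F'(\|V_\varepsilon\|_{L^\infty})$) in order to upgrade the weak measure solution to an EVI gradient flow and invoke the contractivity provided by the AGS theory.
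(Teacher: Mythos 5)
Your Dobrushin/Gr\"onwall strategy is a genuinely different route from the paper's, and for $m\ge 2$ it is plausible and essentially self-contained; but it does not prove the theorem as stated, and the gap is exactly where you flag it. \Cref{thm:uniqueness} covers all $m>1$, while for $1<m<2$ your estimate (ii) only yields the H\"older bound $\|b_\varepsilon[\rho^1]-b_\varepsilon[\rho^2]\|_{L^\infty}\lesssim d_W(\rho^1,\rho^2)^{m-1}$, hence $\tfrac{d}{dt}d_W^2\lesssim d_W^2+d_W^{m}$, which fails the Osgood criterion and cannot exclude branching from $d_W(0)=0$; neither of the two repairs you gesture at is carried out, and the second one ($\lambda$-convexity of $\mf^\varepsilon$ plus EVI contractivity) \emph{is} the paper's proof, so the hard case is deferred to precisely the argument you were asked to supply. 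There is also an unacknowledged problem in estimate (i) in the same regime: your Lipschitz constant is $\|D^2V_\varepsilon\|_{L^\infty}\|F'(V_\varepsilon*\rho)\|_{L^1(\Rd)}$, and for $1<m<2$ the bound $F'(s)\lesssim s^{m-1}$ leads to $\int_{\Rd}(V_\varepsilon*\rho)^{m-1}\,dx$, which need not be finite for a general $\rho\in\mptrd$: Young's inequality is unavailable for the exponent $m-1<1$, and a probability measure with finite second moment can have tails slow enough that $(V_\varepsilon*\rho)^{m-1}\notin L^1(\Rd)$. So below $m=2$ both halves of your velocity-field analysis break down, not only the stability in $\rho$.

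For comparison, the paper never differentiates $t\mapsto d_W^2(\rho^1_t,\rho^2_t)$ along two solutions. It shows (Section~\ref{sec:convexity}, adapting \cite[Propositions 3.4 and 3.6]{blob_weighted_craig} and \cite[Propositions 3.10--3.12]{Patacchini_blob19}) that the extra hypotheses $V_1\in C^2(\Rd)$, $D^2V_1\in L^\infty(\Rd)$ make $\mf^\varepsilon$ satisfy an above-the-tangent inequality, i.e.\ $\lambda_F^\varepsilon$-convexity along geodesics with $\lambda_F^\varepsilon=-c_2\|D^2V_\varepsilon\|_{L^\infty}\|V_\varepsilon\|_{L^\infty}^{m-2}/(m-1)\approx-\varepsilon^{-2-d(m-1)}$, identifies $\frac{\delta\mf^\varepsilon}{\delta\rho}=V_\varepsilon*F'(V_\varepsilon*\rho)$ as the subdifferential, and then invokes the $\lambda$-gradient-flow theory of \cite{AGS} (following \cite[Section 5]{Patacchini_blob19}) to conclude that weak measure solutions in the sense of Definition~\ref{def:weak-meas-sol-F} are EVI gradient flows and hence unique. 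This handles every $m>1$ in one stroke because uniqueness is extracted from the EVI contraction rather than from a pointwise Lipschitz bound on $F'(V_\varepsilon*\rho)$. Your approach, restricted to $m\ge 2$ and with the differentiation of $d_W^2$ justified properly (e.g.\ via \cite[Theorem 8.4.7]{AGS} rather than the informal smoothing of $|x-y|^2$), would be a correct and more elementary alternative on that subrange, and it even gives an explicit double-exponential-in-$\varepsilon$ stability constant; but as written the statement for $1<m<2$ remains unproved, and closing it requires either carrying out the interpolation you allude to or reproducing the geodesic-convexity argument of the paper.
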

The following concluding result is completely analogous to Theorem 1.2 of~\cite{blob_weighted_craig}.
\begin{cor}[Particle approximation to~\eqref{eq:nonlinear-diffusion}]
\label{cor:particle_approx}
Let $F$ satisfy~\ref{ass:AGS-F}, \ref{ass:diff-F}, \ref{ass:diff-reg}, and~\ref{ass:F-m} for some $m>1$. Assume $V_1$ satisfies~\ref{ass:v1}, $V_1\in C^2(\Rd)$, and $D^2V_1\in L^\infty(\Rd)$. In the case $1<m<2$, assume moreover that $\supp V_1 \subset B_R$ for some $R>0$. For any $t\in[0,T]$, $N\in\mathbb{N}$, the empirical measure $\rho^N_\varepsilon(t) = \frac{1}{N}\sum_{j=1}^N \delta_{x^j_\varepsilon(t)}$ is a weak solution to~\eqref{eq:nlie-class} provided the particles satisfy the following ODE system
\[
\dot{x}^i_\varepsilon(t) = - \nabla \int_{\R^d} V_\varepsilon(x^i_\varepsilon(t) - y) F'\left(
\frac{1}{N}\sum_{j=1}^NV_\varepsilon(y - x^j_\varepsilon(t))
\right)dy \quad \forall i=1,\dots, N.
\]
Suppose that (up to a subsequence) as $\varepsilon\to0$ there exist $N=N(\varepsilon)\to+\infty$ such that
\[
e^{-\lambda_F^\varepsilon t} d_W(\rho_\varepsilon^N(0),\rho(0))\to0, \qquad \mbox{for }\,
\lambda_F^\varepsilon\approx-\varepsilon^{-2-d(m-1)},\quad t\in[0,T],
\]
with $\rho_0\in \mpdtard$ such that $\mf[\rho_0]<\infty$ and $T>0$. Then $\rho_\varepsilon^N(t)$ converges narrowly to a weak solution of~\eqref{eq:nonlinear-diffusion}, $\rho(t)$, for any $t\in[0,T]$.
\end{cor}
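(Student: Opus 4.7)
The plan is to combine three ingredients in the spirit of Theorem 1.2 of~\cite{blob_weighted_craig}: a direct verification that the empirical measures solve~\eqref{eq:nlie-class}, a $\lambda$-convexity estimate for $\mf^\varepsilon$ along generalised geodesics with explicit rate $\lambda_F^\varepsilon \approx -\varepsilon^{-2-d(m-1)}$, and the nonlocal-to-local convergence of~\Cref{thm:exist_nonlinear_diffusion}. The extra smoothness assumptions $V_1\in C^2$ and $D^2V_1\in L^\infty$ make the driving velocity in the ODE Lipschitz in space (with constants depending on $\varepsilon$ and $N$), so that the particle trajectories exist globally in time.

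To see that $\rho_\varepsilon^N(t)$ is a weak measure solution of~\eqref{eq:nlie-class} in the sense of~\Cref{def:weak-meas-sol-F}, I would take $\varphi \in C_c^1(\Rd)$ in~\eqref{eq:weak-form-F} and compute
\[
\frac{d}{dt}\int_\Rd \varphi\, d\rho_\varepsilon^N(t) = \frac{1}{N}\sum_{i=1}^N \nabla \varphi(x_\varepsilon^i(t))\cdot \dot{x}_\varepsilon^i(t),
\]
then substitute the prescribed ODE $\dot{x}_\varepsilon^i(t) = -\bigl(\nabla V_\varepsilon * F'(V_\varepsilon*\rho_\varepsilon^N(t))\bigr)(x_\varepsilon^i(t))$ and integrate in time to recover exactly~\eqref{eq:weak-form-F} with $\rho_s^\varepsilon := \rho_\varepsilon^N(s)$. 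This identifies $\rho_\varepsilon^N$ as an absolutely continuous curve in $(\mptrd,d_W)$ satisfying the required weak formulation.

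The core of the argument is a Wasserstein contraction estimate between the particle solution and the JKO solution of~\Cref{thm:exist_nlie-class}. Using the scalings $\|V_\varepsilon\|_\infty \lesssim \varepsilon^{-d}$, $\|\nabla V_\varepsilon\|_\infty \lesssim \varepsilon^{-d-1}$, $\|D^2 V_\varepsilon\|_\infty \lesssim \varepsilon^{-d-2}$ together with the upper bound $F''(y) \le c_2\, y^{m-2}$ from~\ref{ass:F-m} applied to $y = V_\varepsilon*\rho$, I would expand the Wasserstein Hessian of $\mf^\varepsilon$ along generalised geodesics and obtain that $\mf^\varepsilon$ is $\lambda_F^\varepsilon$-convex with $\lambda_F^\varepsilon \gtrsim -\varepsilon^{-2-d(m-1)}$. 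Combined with~\Cref{thm:uniqueness}, this identifies the JKO curve $\rho^\varepsilon$ starting from $\rho_0$ with the unique $\lambda_F^\varepsilon$-EVI gradient flow of $\mf^\varepsilon$, and the general stability theorem for such flows then yields
\[
d_W\bigl(\rho_\varepsilon^N(t),\rho^\varepsilon(t)\bigr) \le e^{-\lambda_F^\varepsilon t}\, d_W\bigl(\rho_\varepsilon^N(0),\rho_0\bigr), \qquad t\in[0,T].
\]

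The conclusion then follows from the triangle inequality $d_W(\rho_\varepsilon^N(t),\rho(t)) \le d_W(\rho_\varepsilon^N(t),\rho^\varepsilon(t)) + d_W(\rho^\varepsilon(t),\rho(t))$: the first term vanishes along the chosen subsequence $N=N(\varepsilon)\to\infty$ by the standing hypothesis, while~\Cref{thm:exist_nonlinear_diffusion} provides narrow convergence $\rho^\varepsilon(t)\to \rho(t)$ together with uniform second-moment control, which is enough to upgrade to narrow convergence of $\rho_\varepsilon^N(t)$ to $\rho(t)$. The hard part will be the $\lambda$-convexity step: $\mf^\varepsilon$ is a hybrid between an internal and a nonlocal interaction energy, so its geodesic Hessian must be unpacked by hand and the scaling in $\varepsilon$ tracked carefully, ensuring the growth $\varepsilon^{-2-d(m-1)}$ is indeed sharp enough for the balancing condition $e^{-\lambda_F^\varepsilon t}d_W(\rho_\varepsilon^N(0),\rho_0)\to 0$ to be attainable with some $N(\varepsilon)\to\infty$.
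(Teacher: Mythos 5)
Your proposal is correct and follows essentially the same route as the paper: verify that the empirical measure is a weak solution of~\eqref{eq:nlie-class}, establish $\lambda_F^\varepsilon$-convexity of $\mf^\varepsilon$ along (generalised) geodesics with $\lambda_F^\varepsilon\approx-\varepsilon^{-2-d(m-1)}$ adapted from~\cite{blob_weighted_craig,Patacchini_blob19}, apply the stability estimate for $\lambda$-gradient flows from~\cite[Theorem 11.2.1]{AGS} to compare $\rho_\varepsilon^N$ with the unique solution $\rho^\varepsilon$ of~\Cref{thm:uniqueness}, and conclude by the triangle-type argument together with~\Cref{thm:exist_nonlinear_diffusion}. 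The only cosmetic difference is that the paper phrases the convexity step via an above-the-tangent-line inequality (under the normalisation $F'(0)=0$) rather than a Hessian expansion, which amounts to the same estimate.
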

In view of~\Cref{cor:particle_approx} and~\cite{blob_weighted_craig}, if the initial distribution of particles $x_\varepsilon^i$ is cleverly chosen (so that $d_W(\rho_\varepsilon^N(0),\rho(0)) = O(1/N)$), then one can take $N = o\left(e^{-1/\varepsilon^{2+d(m-1)}} \right)$ to fulfill the hypothesis on the initial condition. However, it was also suggested in~\cite{Craig_blob2016,blob_weighted_craig} by numerical evidence that a much smaller number of particles $N\sim \varepsilon^{-1.01}$ for $m=2$ in one dimension still yields good accuracy. Bridging this gap between theory and practice is left for future investigation.

\section{Results on the nonlocal equation}\label{sec:nlie}

In this section we focus on \eqref{eq:nlie-class}. We show existence of weak measure solutions by means of the JKO scheme~\cite{JKO98} which is needed to derive uniform bounds for the nonlocal-to-local limit proven in~\Cref{sec:nonlocal-to-local-limit}. Although this is not the main purpose of the paper, and it may be unsurprising, this is indeed an existence result for weak measure solutions to a class of nonlocal PDEs, including nonlocal interactions but not limited to this case. To the best of our knowledge this is the first general result in this context --- the structure of $\mf^\varepsilon$ does not fit in the classical framework of functionals considered in~\cite{AGS}. Note that we do not require the functional to satisfy convexity, for instance as in \cite{AGS,CDFFLS}.

We consider initial data $\rho_0 \in \mptrd$ such that $\sup_{\varepsilon>0}\mf^\varepsilon[\rho_0]<+\infty$. In the case of nonlinear diffusion equations, $F(x) = \frac{1}{m-1}|x|^m$ with $m>1$ we denote the corresponding energy functionals by
\[
\mh_m^\varepsilon[\rho] := \frac{1}{m-1}\int_{\Rd}|V_\varepsilon * \rho(x)|^m \, dx.
\]
\begin{rem}
\label{rem:uniform-bound-initial-energy}
In the case of power laws $F(x) = \frac{1}{m-1}|x|^m$ for $m>1$, the condition $\sup_{\varepsilon>0}\mh_m^\varepsilon[\rho_0]<+\infty$	is guaranteed when $\rho_0\in\mptrd\cap L^m(\Rd)$. More precisely, Young's convolution inequality gives
	\begin{align*}
		\mh_m^\varepsilon[\rho_0]&=\frac{1}{m-1}\int_{\Rd}|V_\varepsilon * \rho_0(x)|^m\,dx =\frac{1}{m-1}\|V_\varepsilon*\rho_0\|_{L^m(\Rd)}^m 	\\
		&\le\frac{1}{m-1}\|V_\varepsilon\|_{L^1}^m\|\rho_0\|_{L^m}^m=\frac{1}{m-1}\|V_1\|_{L^1}^m\|\rho_0\|_{L^m}^m<\infty.
	\end{align*}
\end{rem}

We now proceed with the JKO scheme associated to $\mf^\varepsilon$. First, we define a sequence recursively as follows:
\begin{itemize}
	\item fix a time step $\tau \in(0,1)$ such that $\rho_{\tau,\varepsilon}^0:=\rho_0$;
	\item for $n\in\mathbb{N}$ and given $\rhotne\in\mptrd$, choose
	\begin{equation}\label{eq:jko}
		\rhotnne\in\argmin_{\rho\in\mptrd}\left\{\frac{d_W^2(\rhotne,\rho)}{2\tau}+\mf^\varepsilon[\rho]\right\}.
	\end{equation}
\end{itemize}
The above sequence is well-defined for $\tau$ sufficiently small independently of $\varepsilon$ (given explicitly in~\Cref{lem:one-step-JKO-F}).

Let $T>0$ be fixed, and define a piecewise constant interpolation as follows: take $N:=\left[\frac{T}{\tau}\right]$ the largest integer less than or equal to $\frac{T}{\tau}$ and set
$$
\rhote(t)=\rhotne \qquad t\in((n-1)\tau,n\tau], \quad n=0,1,\dots,N,
$$
being $\rhotne$ defined in \eqref{eq:jko}. As usually proven, we derive energy and moments bounds sufficient to show narrow compactness. 

\begin{prop}[Narrow compactness, energy, $\&$ moments bound]\label{prop:en-ineq-mom-bound}
	Let $0 < \varepsilon_0<\infty$ be fixed and suppose $F$ satisfied~\ref{ass:AGS-F}. There exists an absolutely continuous curve $\tilde{\rho}^\varepsilon: [0,T]\rightarrow\mptrd$ such that the piecewise constant interpolation $\rhote$ admits a subsequence $\rho_{\tau_k}^\varepsilon$ narrowly converging to $\tilde{\rho}^\varepsilon$ uniformly in $t\in[0,T]$ and $0 < \varepsilon\le \varepsilon_0$ as $k\rightarrow +\infty$. Moreover, for any $t\in[0,T]$, the following uniform bounds in $\tau$ and $0< \varepsilon \le \varepsilon_0$ hold
	\begin{subequations}
		\begin{align*}
			\mf^\varepsilon[\tilde{\rho}^\varepsilon(t)]&\le\sup_{\varepsilon>0}\mf^\varepsilon[\rho_0],\quad
			m_2(\tilde{\rho}^\varepsilon)\le C\left(T, \, m_2(\rho_0), \, \varepsilon_0^2 m_2(V_1), \, \sup_{\varepsilon>0}\mf^\varepsilon[\rho_0]\right),
		\end{align*}
	\end{subequations}
where $C\left(T, \, m_2(\rho_0), \, \varepsilon_0^2m_2(V_1), \, \sup_{\varepsilon>0}\mf^\varepsilon[\rho_0]\right)>0$ is a uniform constant depending only on the quantities in the brackets.
\end{prop}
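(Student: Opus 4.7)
The plan is to follow the classical JKO convergence machinery, keeping track of the $\varepsilon$-dependence solely through the mollified energy $\mf^\varepsilon$ and the identity $m_2(V_\varepsilon*\rho) = m_2(\rho) + \varepsilon^2 m_2(V_1)$ (valid because $V_1$ is even). The starting point is the one-step minimality inequality obtained by choosing $\rhotne$ as a competitor in~\eqref{eq:jko}:
\[
\frac{d_W^2(\rhotne,\rhotnne)}{2\tau} + \mf^\varepsilon[\rhotnne] \le \mf^\varepsilon[\rhotne].
\]
Telescoping from $k=0$ to $n-1$ immediately yields energy monotonicity $\mf^\varepsilon[\rhotne]\le \mf^\varepsilon[\rho_0]$ together with the cumulative displacement bound $\sum_{k=0}^{n-1}d_W^2(\rho_{\tau,\varepsilon}^k,\rho_{\tau,\varepsilon}^{k+1})/(2\tau) \le \mf^\varepsilon[\rho_0] - \mf^\varepsilon[\rhotne]$.

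To turn the cumulative bound into a genuinely uniform estimate I need a lower bound on $\mf^\varepsilon[\rhotne]$ that involves only the second moment. Under~\ref{ass:AGS-F}, the condition $\liminf_{s\downarrow0}F(s)/s^\alpha>-\infty$ with $\alpha > d/(d+2)$ combined with superlinear growth at infinity provides, via a standard interpolation between $L^\alpha$ and a weighted $L^1$-space (which I expect to be the content of~\Cref{lem:lower-bound-F}), a lower bound of the shape
\[
\mf^\varepsilon[\rho] \ge -C\bigl(1 + m_2(V_\varepsilon*\rho)^\theta\bigr), \qquad \text{for some } \theta\in(0,1).
\]
The moment identity above converts this into a lower bound in $m_2(\rho)$ alone, uniformly for $\varepsilon\le\varepsilon_0$ since the additive $\varepsilon^2 m_2(V_1)\le \varepsilon_0^2 m_2(V_1)$ is absorbed into $C$.

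The moment bound then follows by applying the triangle inequality in $\mpt$ to $\sqrt{m_2(\cdot)} = d_W(\cdot,\delta_0)$ and Cauchy-Schwarz to the telescoped sum (using $n\tau\le T$):
\[
\sqrt{m_2(\rhotne)} \le \sqrt{m_2(\rho_0)} + \sum_{k=0}^{n-1}d_W(\rho_{\tau,\varepsilon}^k,\rho_{\tau,\varepsilon}^{k+1}) \le \sqrt{m_2(\rho_0)} + \sqrt{2T\bigl(\mf^\varepsilon[\rho_0] - \mf^\varepsilon[\rhotne]\bigr)}.
\]
Inserting the lower bound on $\mf^\varepsilon[\rhotne]$ and squaring yields a closed inequality $m_2(\rhotne) \le A + B\, m_2(\rhotne)^\theta$ with constants depending only on $T$, $m_2(\rho_0)$, $\varepsilon_0^2m_2(V_1)$, and $\sup_{\varepsilon>0}\mf^\varepsilon[\rho_0]$. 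Since $\theta<1$, a Young's inequality self-bootstrap absorbs the $m_2(\rhotne)^\theta$ term and produces a uniform bound on $m_2(\rhote(t))$; feeding this back gives the corresponding uniform control of $\sum_k d_W^2(\rho_{\tau,\varepsilon}^k,\rho_{\tau,\varepsilon}^{k+1})/\tau$. This coupled energy-moment loop, at the scaling-critical exponent $\alpha>d/(d+2)$, is the one step I expect to require the most care.

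For compactness, I note that for $0\le s \le t \le T$ the piecewise constant interpolant satisfies
\[
d_W(\rhote(s),\rhote(t)) \le \left(\Bigl(1 + \tfrac{t-s}{\tau}\Bigr) \sum_{k=0}^{N-1} d_W^2(\rho_{\tau,\varepsilon}^k,\rho_{\tau,\varepsilon}^{k+1})\right)^{1/2} \le C\bigl(\sqrt{t-s} + \sqrt{\tau}\bigr),
\]
delivering asymptotic equicontinuity in $t$, while tightness of the family $\{\rhote(t)\}_{t,\tau}$ is immediate from the uniform second-moment bound. A refined Ascoli-Arzel\`a theorem in the Wasserstein space (c.f.~\cite[Proposition 3.3.1]{AGS}) then extracts a subsequence $\tau_k\downarrow0$ such that $\rho_{\tau_k}^\varepsilon$ converges narrowly, uniformly on $[0,T]$, to an absolutely continuous curve $\tilde\rho^\varepsilon$. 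The asserted bounds on $\mf^\varepsilon[\tilde\rho^\varepsilon(t)]$ and $m_2(\tilde\rho^\varepsilon(t))$ pass to the limit by lower semicontinuity of both functionals with respect to narrow convergence, and the uniformity in $\varepsilon\le\varepsilon_0$ is automatic since all of the above constants depend on $\varepsilon$ only through $\varepsilon_0^2 m_2(V_1)$ and the hypothesis $\sup_{\varepsilon>0}\mf^\varepsilon[\rho_0]<\infty$.
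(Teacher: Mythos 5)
Your proposal is correct and follows essentially the same route as the paper: one-step minimality of the JKO scheme, telescoping, the lower bound on $\mf^\varepsilon$ in terms of second moments (the content of \Cref{lem:lower-bound-F}), the self-consistent inequality $m_2 \le A + B\,m_2^\alpha$ with $\alpha<1$, the $\frac12$-H\"older-type equicontinuity estimate, the refined Ascoli--Arzel\`a theorem of \cite[Proposition 3.3.1]{AGS}, and lower semicontinuity to pass the bounds to the limit. The only (harmless) cosmetic differences are that you use $\sqrt{m_2(\cdot)}=d_W(\cdot,\delta_0)$ and the exact identity $m_2(V_\varepsilon*\rho)=m_2(\rho)+\varepsilon^2 m_2(V_1)$ where the paper uses the cruder doubling inequalities.
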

The following proof is based on~\cite{JKO98, AGS}.
\begin{proof}
	From the definition of the sequence $\{\rhotne\}_{n=0,\dots,N}$ it holds
	\begin{align}\label{eq:basic-ineq}
		\frac{d_W^2(\rhotne,\rhotnne)}{2\tau}+\mf^\varepsilon[\rhotnne]\le\mf^\varepsilon[\rhotne], \quad \forall n=0,\dots,N-1.
	\end{align}
	which implies $\mf^\varepsilon[\rhotnne]\le\mf^\varepsilon[\rhotne]$, and, in particular, the following bound for the regularised internal energy
	\begin{align}\label{eq:energy-ineq}
		\sup_{0 \le n \le N, \, N\tau \le T} \mf^\varepsilon[\rhotne]\le\mf^\varepsilon[\rho_0],
	\end{align}
	where the supremum is over all $n=0,\dots,N$ and $\tau\in(0,1)$ such that $N\tau \le T$ with $N := \left[\frac{T}{\tau}\right]$. By summing up over $k$ in inequality \eqref{eq:basic-ineq}, we obtain
	\begin{equation}
		\label{eq:sum-ineq}
		\sum_{k=m}^n\frac{d_W^2(\rho_{\tau,\varepsilon}^k,\rho_{\tau,\varepsilon}^{k+1})}{2\tau}\le \mf^\varepsilon[\rho_{\tau,\varepsilon}^{m}]-\mf^\varepsilon[\rhotnne], \quad \forall 0 \le m \le n \le N-1.
	\end{equation}
	\underline{Bounded second moment:} We claim the existence of some uniform constant $C>0$ (depending on the quantities discussed in the statement of this result) such that
	\begin{equation}
		\label{eq:mom-inequality}
		\sup_{0 \le n \le N, \, N\tau \le T}m_2(\rho_{\tau,\varepsilon}^n) \le C.
	\end{equation}
	By~\Cref{rem:mom-ineq}, for any fixed $n=0,\dots,N-1$, we begin with
	\[
	m_2(\rho_{\tau, \varepsilon}^{n+1})\le 2d_W^2(\rho_0,\rho_{\tau,\varepsilon}^{n+1}) + 2m_2(\rho_0).
	\] 
	We use the triangle inequality and Cauchy-Schwarz to estimate the $d_W^2$ term
	\[
	m_2(\rho_{\tau,\varepsilon}^{n+1}) \le 2(n+1) \sum_{k=0}^{n}d_W^2(\rho_{\tau,\varepsilon}^k, \, \rho_{\tau, \varepsilon}^{k+1}) + 2m_2(\rho_0).
	\]
	We replace the summation with~\eqref{eq:sum-ineq} and use $n+1 \le N$ to obtain
	\[
	m_2(\rho_{\tau, \varepsilon}^{n+1})\le 4 T (\mf^\varepsilon[\rho_0] - \mf^\varepsilon[\rho_{\tau,\varepsilon}^{n+1}]) + 2m_2(\rho_0).
	\]
	We insert the lower bound for $\mf^\varepsilon$ from~\eqref{eq:low-bdd-fveps} so that we have
	\[
	m_2(\rho_{\tau,\varepsilon}^{n+1}) \le 4T \left(
	\mf^\varepsilon[\rho_0] +c_1 + c_2 C_{d,\alpha}(1 + \varepsilon^2 m_2(V_1) + m_2(\rho_{\tau,\varepsilon}^{n+1}))^\alpha
	\right) + 2m_2(\rho_0).
	\]
	Keeping in mind that we can assume $\alpha < 1$ without loss of generality, this final inequality implies the bound~\eqref{eq:mom-inequality}. This can be seen by analysing sequences $x_n\ge 0$ satisfying $x_n \le C_1 + C_2x_n^\alpha$.
	
	\underline{Bounded squared 2-Wasserstein distance:} We claim the existence of some uniform constant $c>0$ such that
	\begin{equation}
		\label{eq:total-square-en-estim}
		\sum_{k=m}^nd_W^2(\rho_{\tau,\varepsilon}^k, \, \rho_{\tau,\varepsilon}^{k+1}) \le c\tau, \quad \forall 0 \le m \le n \le N-1.
	\end{equation}
	We insert the upper bound of $\mf^\varepsilon$~\eqref{eq:energy-ineq} and the lower bound of $\mf^\varepsilon$~\eqref{eq:low-bdd-fveps} into~\eqref{eq:sum-ineq} to obtain
	\begin{align*}
		\sum_{k=m}^nd_W^2(\rho_{\tau,\varepsilon}^k, \, \rho_{\tau,\varepsilon}^{k+1}) &\le 2\tau (\mf^\varepsilon[\rho_{\tau,\varepsilon}^{m}]-\mf^\varepsilon[\rhotnne]) \\
		&\le 2\tau (\mf^\varepsilon[\rho_0] +c_1 + c_2 C_{d,\alpha}(1 + \varepsilon^2 m_2(V_1) + m_2(\rho_{\tau,\varepsilon}^{n+1}))^\alpha).
	\end{align*}
	By the uniform second moment estimate~\eqref{eq:mom-inequality}, the inequality~\eqref{eq:total-square-en-estim} is verified.
	
	\underline{Compactness:} Now, let us consider $0< s<t$ such that $s\in((m-1)\tau,m\tau]$ and $t\in((n-1)\tau,n\tau]$ (which implies $|n-m|<\frac{|t-s|}{\tau}+1$); by Cauchy-Schwarz inequality and \eqref{eq:total-square-en-estim}, we obtain
	\begin{equation}\label{eq:holder-cont}
		\begin{split}
			d_W(\rhote(s),\rhote(t))&\le\sum_{k=m}^{n-1}d_W(\rho_{\tau,\varepsilon}^{k},\rho_{\tau,\varepsilon}^{k+1})\le\left(\sum_{k=m}^{n-1}d_W^2(\rho_{\tau,\varepsilon}^{k},\rho_{\tau,\varepsilon}^{k+1})\right)^{\frac{1}{2}}|n-m|^{\frac{1}{2}}\\&\le c \left(\sqrt{|t-s|}+\sqrt{\tau}\right),
		\end{split}
	\end{equation}
	where $c$ is a positive constant.
	Thus $\rhote$ is $\frac{1}{2}$-H\"{o}lder equicontinuous, up to a negligible error of order $\sqrt{\tau}$. By using a refined version of Ascoli-Arzel\`{a}'s theorem, \cite[Proposition 3.3.1]{AGS}, we obtain $\rhote$ admits a subsequence narrowly converging to a limit $\tilde{\rho}^\varepsilon$ as $\tau\to0^+$ uniformly on $[0,T]$.
	Since $|\cdot|^2$ is lower semicontinuous and bounded from below, we actually have for any $t\in[0,T]$
	\begin{align*}
		\liminf_{k\to+\infty}\int_\Rd|x|^2\,d\rho_{\tau_k}^\varepsilon(x)\ge\int_\Rd|x|^2\,d\tilde{\rho}^\varepsilon(x).
	\end{align*}
	Moreover, $\mf^\varepsilon$ is lower semicontinuous and bounded from below since $V_\varepsilon*\rho$ is bounded. Then an application of Fatou's lemma implies 
	\begin{align*}
		\liminf_{k\to+\infty}\mf^\varepsilon[\rho_{\tau_k}^\varepsilon]&\ge\mf^\varepsilon[\tilde{\rho}^\varepsilon],
	\end{align*}
	whence the thesis follows by applying the above inequalities to \eqref{eq:energy-ineq} and \eqref{eq:mom-inequality}.
\end{proof}

Next, we show that $\tilde{\rho}^\varepsilon$ provided by~\Cref{prop:en-ineq-mom-bound} is indeed a solution to~\eqref{eq:nlie-class}, thus proving~\Cref{thm:exist_nlie-class}. Since we make use of \ref{ass:diff-F}, the theorem below does not include linear diffusion corresponding to $F(x) = x\log x$.

\begin{proof}[Proof of~\Cref{thm:exist_nlie-class}]
	Let us consider two consecutive elements of the sequence $\{\rhotne\}_{n\in\mathbb{N}}$ defined from the JKO step \eqref{eq:jko}, i.e. $\rhotne$ and $\rhotnne$. We perturb $\rhotnne$ by using the map $P^\sigma=\mathrm{id}+\sigma\zeta$, for some $\zeta\in C_c^\infty(\Rd;\Rd)$ and $\sigma>0$, that is we consider the perturbation
	\begin{equation}\label{eq:perturbation}
		\rho^\sigma:= P_\#^\sigma\rhotnne.
	\end{equation}
	Being $\rhotnne$ a minimiser of \eqref{eq:jko}, we have 
	\begin{equation}\label{eq:optimality}
		\frac{1}{2\tau}\left[\frac{d_W^2(\rhotne,\rho^\sigma)- d_W^2(\rhotne, \rhotnne)}{\sigma}\right]+\frac{\mf^\varepsilon[\rho^\sigma]-\mf^\varepsilon[\rhotnne]}{\sigma}\ge0.
	\end{equation}
We now let $\sigma\to0$ in \eqref{eq:optimality} analysing the two terms involved separately.

\underline{The energy functional terms in~\eqref{eq:optimality}:} In this part of the proof, we aim to show
\begin{equation}
\label{eq:energy-diff}
\frac{\mf^\varepsilon[\rho^\sigma] - \mf^\varepsilon[\rho_{\tau,\varepsilon}^{n+1}]}{\sigma} \to \int_{\Rd}\zeta(x)\cdot \nabla V_\varepsilon * [F'(V_\varepsilon * \rho_{\tau,\varepsilon}^{n+1})](x) \, d\rho_{\tau,\varepsilon}^{n+1}(x), \quad \sigma\to 0.
\end{equation}
We apply the mean-value form of the Taylor expansion to $F$
\begin{align}
	\label{eq:interaction-terms}
\begin{split}
&\quad \frac{1}{\sigma}\int_{\Rd}(F(V_\varepsilon * \rho^\sigma(x)) - F(V_\varepsilon * \rho_{\tau,\varepsilon}^{n+1}))dx 	\\
&= \frac{1}{\sigma}\int_{\Rd}(V_\varepsilon*\rho^\sigma(x) - V_\varepsilon* \rho_{\tau,\varepsilon}^{n+1}(x))\underbrace{\int_0^1 F'\left(
tV_\varepsilon*\rho^\sigma(x) + (1-t)V_\varepsilon * \rho_{\tau,\varepsilon}^{n+1}(x)
\right)dt}_{=:M_\varepsilon^\sigma(x)}\, dx 	\\
&= \frac{1}{\sigma}\int_{\Rd} (V_\varepsilon * M_\varepsilon^\sigma)(x) d[\rho^\sigma - \rho_{\tau,\varepsilon}^{n+1}](x) = \int_{\Rd}\frac{(V_\varepsilon*M_\varepsilon^\sigma)(P^\sigma(x)) - (V_\varepsilon*M_\varepsilon^\sigma)(x)}{\sigma}d\rho_{\tau,\varepsilon}^{n+1}(x) 	\\
&= \int_{\Rd}\left\{\int_{\Rd} \left(\frac{V_\varepsilon(P^\sigma(x) - y) - V_\varepsilon(x-y)}{\sigma}\right) M_\varepsilon^\sigma(y)dy\right\} d\rho_{\tau,\varepsilon}^{n+1}(x).
\end{split}
\end{align}
In the last few lines, we used the definition of $\rho^\sigma$ from~\eqref{eq:perturbation} and expanded the convolution. The limit \eqref{eq:energy-diff} is achieved by first proving
\begin{align}
\label{eq:aex-convergence}
\begin{split}
\int_{\Rd}&\left(\frac{V_\varepsilon(P^\sigma(x) - y) - V_\varepsilon(x-y)}{\sigma}\right) M_\varepsilon^\sigma(y)dy 	\\
&\to \zeta(x)\cdot \int_{\Rd}\nabla V_\varepsilon(x-y) F'(V_\varepsilon * \rho_{\tau,\varepsilon}^{n+1}(y))dy, \quad \sigma \to 0, \, \rho_{\tau,\varepsilon}^{n+1}\text{-almost every }x\in\Rd.
\end{split}
\end{align}
This is exactly $\zeta(x)\cdot \nabla V_\varepsilon * [F'(V_{\varepsilon}*\rho_{\tau,\varepsilon}^{n+1})](x)$ which appears as the integrand in~\eqref{eq:energy-diff}. Assuming this is true for now, by Egorov's theorem, for every $\eta>0$, there exists a measurable set $S_\eta\subset \Rd$ such that $\rho_{\tau,\varepsilon}^{n+1}(S_\eta) < \eta$ and the convergence~\eqref{eq:aex-convergence} is uniform on $\Rd\setminus S_\eta$. Continuing from the last line of~\eqref{eq:interaction-terms}, we have
\begin{align}
	\label{eq:egorov}
	\begin{split}
\frac{\mf^\varepsilon[\rho^\sigma] - \mf^\varepsilon[\rho_{\tau,\varepsilon}^{n+1}]}{\sigma} = &\int_{S_\eta} \left\{\int_{\Rd} \left(\frac{V_\varepsilon(P^\sigma(x) - y) - V_\varepsilon(x-y)}{\sigma}\right) M_\varepsilon^\sigma(y)dy\right\} d\rho_{\tau,\varepsilon}^{n+1}(x) 	\\
+ &\int_{\Rd\setminus S_\eta} \left\{\int_{\Rd} \left(\frac{V_\varepsilon(P^\sigma(x) - y) - V_\varepsilon(x-y)}{\sigma}\right) M_\varepsilon^\sigma(y)dy\right\} d\rho_{\tau,\varepsilon}^{n+1}(x).
\end{split}
\end{align}
The integral over $\Rd\setminus S_\eta$ passes well in the limit $\sigma\to 0$ owing to~\eqref{eq:aex-convergence} and Egorov's theorem, so~\eqref{eq:energy-diff} is achieved once we show that the integral over $S_\eta$ is small. We apply the mean-value form of Taylor's theorem for $V_\varepsilon$ and~\Cref{lem:bdd-comp-F-V} (with $\rho = t\rho^\sigma + (1-t)\rho_{\tau,\varepsilon}^{n+1}$ and $C=1$) to estimate $M_\varepsilon^\sigma$ and obtain
\begin{align*}
&\left|\int_{\Rd} \left(\frac{V_\varepsilon(P^\sigma(x) - y) - V_\varepsilon(x-y)}{\sigma}\right) M_\varepsilon^\sigma(y)dy\right|\\
&\le \|F'\|_{L^\infty([0,\,\|V_\varepsilon\|_{L^\infty}])} |\zeta(x)|\int_{\Rd} \int_0^1 |\nabla V_\varepsilon (x + s\sigma \zeta(x) - y)|ds dy 	\\
&= \|F'\|_{L^\infty([0,\,\|V_\varepsilon\|_{L^\infty}])}|\zeta(x)|\int_0^1 \int_{\Rd} |\nabla V_\varepsilon(x + s \sigma \zeta(x) - y)|dy ds\\
&= \|F'\|_{L^\infty([0,\,\|V_\varepsilon\|_{L^\infty}])}|\zeta(x)|\int_0^1 \int_{\Rd} |\nabla V_\varepsilon(z)|dz ds 	\\
&= \|F'\|_{L^\infty([0,\,\|V_\varepsilon\|_{L^\infty}])}\|\nabla V_\varepsilon\|_{L^1}|\zeta(x)|.
\end{align*}
In the second to last line, we have used Fubini and the linear change of variables $z = x + s\sigma\zeta(x) - y$ for fixed $x$. Therefore, the integral over $S_\eta$ from~\eqref{eq:egorov} can be estimated by
\[
\left|\int_{S_\eta}\! \left\{\int_{\Rd} \left(\!\frac{V_\varepsilon(P^\sigma(x) - y) - V_\varepsilon(x-y)}{\sigma}\right) \!M_\varepsilon^\sigma(y)dy\right\} \!d\rho_{\tau,\varepsilon}^{n+1}(x) \right|\! \le \!\|\zeta\|_{L^\infty} \|F'\|_{L^\infty([0,\,\|V_\varepsilon\|_{L^\infty}])} \|\nabla V_\varepsilon\|_{L^1} \, \eta,
\]
which is negligible by taking $\eta \to 0$.

\underline{Proving~\eqref{eq:aex-convergence}:} Throughout this step, we fix $x\in\Rd$. We again use the mean-value form of Taylor's theorem to rewrite the difference quotient appearing in~\eqref{eq:aex-convergence}
\[
\int_{\Rd} \left(\frac{V_\varepsilon(P^\sigma(x) - y) - V_\varepsilon(x-y)}{\sigma}\right) M_\varepsilon^\sigma(y)dy = \zeta(x)\cdot \int_{\Rd}\left(\int_0^1\nabla V_\varepsilon(x + s\sigma\zeta(x) - y)\,ds\right) M_\varepsilon^\sigma(y)dy.
\]
We majorise the integrand with the sequence
\[
\left|
\int_0^1\nabla V_\varepsilon(x + s\sigma\zeta(x) - y)\,ds \, M_\varepsilon^\sigma(y)
\right| \le \|F'\|_{L^\infty([0,\|V_\varepsilon\|_{L^\infty}])}\int_0^1|\nabla V_\varepsilon(x + s\sigma\zeta(x) - y)|ds.
\]
We seek to apply~\Cref{thm:EDCT} on $X = \Rd$ with
\begin{align*}
	f^\sigma(y) &:= \int_0^1\nabla V_\varepsilon(x + s\sigma\zeta(x) - y)\,ds \, M_\varepsilon^\sigma(y), \\
	g^\sigma(y) 	&:= \|F'\|_{L^\infty([0,\|V_\varepsilon\|_{L^\infty}])}\int_0^1|\nabla V_\varepsilon(x + s\sigma\zeta(x) - y)|ds.
\end{align*}
We have already shown the majorisation $|f^\sigma(y)|\le g^\sigma(y)$ and the convergence
\begin{align*}
f^\sigma(y) \to f(y) &:= \nabla V_\varepsilon (x-y) F'(V_\varepsilon*\rho(y)), 	\\
g^\sigma(y) \to g(y) &:= \|F'\|_{L^\infty([0,\|V_\varepsilon\|_{L^\infty}])} |\nabla V_\varepsilon(x-y)|,
\end{align*}
for almost every $y\in\Rd$ can be proven using the usual Dominated Convergence Theorem. In particular, the growth estimate $|\nabla V_1(z)| \le C(1+|z|)$ treats the integration $\int_0^1\, ds$. On the other hand, for $M_\varepsilon^\sigma(y)$, the composition $F'(tV_\varepsilon*\rho^\sigma(y) + (1-t)V_\varepsilon*\rho_{\tau,\varepsilon}^{n+1}(y))$ is bounded uniformly in $\sigma$ by~\Cref{lem:bdd-comp-F-V}. We verify the last assumption of~\Cref{thm:EDCT} using Fubini and the change of variables $z = -s\sigma\zeta(x) + y$.
\begin{align*}
\quad \int_{\Rd}g^\sigma(y) dy &= \|F'\|_{L^\infty([0,\|V_\varepsilon\|_{L^\infty}])} \int_{\Rd} \int_0^1|\nabla V_\varepsilon(x + s\sigma\zeta(x) - y)|ds dy \\
&=\|F'\|_{L^\infty([0,\|V_\varepsilon\|_{L^\infty}])} \int_0^1 \int_{\Rd}|\nabla V_\varepsilon(x + s\sigma\zeta(x) - y)|dy \, ds \\
&= \|F'\|_{L^\infty([0,\|V_\varepsilon\|_{L^\infty}])} \int_0^1 \int_{\Rd}|\nabla V_\varepsilon(x-z)|dz \, ds 	\\
&= \int_{\Rd}g(y) dy.
\end{align*}
Therefore, we can apply~\Cref{thm:EDCT} and~\eqref{eq:aex-convergence} is established.

\underline{The 2-Wasserstein terms in~\eqref{eq:optimality}:} the treatment here is standard and we reproduce the proof in~\cite[Theorem 3.1]{BE22} for completeness. Consider an optimal transport plan $\gamma_{\tau,\varepsilon}^{n+1} \in \Gamma_o(\rho_{\tau,\varepsilon}^n, \, \rho_{\tau,\varepsilon}^{n+1})$ between $\rho_{\tau,\varepsilon}^n$ and $\rho_{\tau,\varepsilon}^{n+1}$. By definition of $d_W$, we have
\begin{equation*}
	\begin{split}
		\frac{1}{2\tau}\left[\frac{d_W^2(\rhotne, \rho^\sigma)-d_W^2(\rhotne, \rhotnne)}{\sigma}\right]&\le\frac{1}{2\tau\sigma}\iint_\Rdd\left(|x-P^\sigma(y)|^2 -|x-y|^2\right)\,d\gamma_{\tau,\varepsilon}^n(x,y)\\
		&=\frac{1}{2\tau\sigma}\iint_\Rdd\left(|x-y-\sigma\zeta(y)|^2 -|x-y|^2\right)\,d\gamma_{\tau,\varepsilon}^n(x,y)\\
		&=-\frac{1}{\tau}\iint_\Rdd(x-y)\cdot \zeta(y)\,d\gamma_{\tau,\varepsilon}^n(x,y)+o(\sigma),
	\end{split}
\end{equation*}
where in the last equality we applied a first order Taylor expansion. By sending $\sigma$ to $0$ and recalling~\eqref{eq:optimality}, it holds
\begin{equation*}
	\frac{1}{\tau}\iint_\Rdd(x-y)\cdot \zeta(y)\,d\gamma_{\tau,\varepsilon}^n(x,y)\le \int_{\Rd}\zeta(x)\cdot \nabla V_\varepsilon*[F'(V_\varepsilon*\rhotnne)](x) d\rhotnne(x).
\end{equation*}
Repeating the same computation for $\sigma\le0$, we actually obtain an equality, that is, for $\zeta=\nabla\varphi$
\begin{equation}\label{eq:sol-discreta-sigma}
	\begin{split}
		\frac{1}{\tau}\!\iint_\Rdd(x-y)\cdot \nabla\varphi(y)d\gamma_{\tau,\varepsilon}^n(x,y)\!=\!\int_{\Rd}\nabla \varphi(x)\cdot \nabla V_\varepsilon*[F'(V_\varepsilon*\rhotnne)](x) d\rhotnne(x).
	\end{split}
\end{equation}
Note that the H\"older estimate \eqref{eq:holder-cont} and $(x-y)\cdot \nabla\varphi(y)=\varphi(x)-\varphi(y)+o(|x-y|^2)$ imply
\[
\frac{1}{\tau}\iint_\Rdd(x-y)\cdot \nabla\varphi(y)\,d\gamma_{\tau,\varepsilon}^n(x,y)=\frac{1}{\tau}\int_\Rd\varphi(x)\,d(\rhotne-\rhotnne)(x) + O(\tau).
\]
Now, let $0\le s<t$ be fixed, with
$$
h=\left[\frac{s}{\tau}\right]+1\quad \text{and}\quad k=\left[\frac{t}{\tau}\right].
$$
Taking into account the last equality, by summing in \eqref{eq:sol-discreta-sigma} over $j$ from $h$ to $k$, we obtain
\begin{align*}
	\int_\Rd\varphi(x)\,d\rho_{\tau,\varepsilon}^{k+1}-&\int_\Rd\varphi(x)\,d\rho_{\tau,\varepsilon}^h+O(\tau^2)=\\
	&-\tau\sum_{j=h}^k \int_{\Rd}\nabla \varphi(x)\cdot \nabla V_\varepsilon * [F'(V_\varepsilon*\rho_{\tau,\varepsilon}^{j+1})](x)d\rho_{\tau,\varepsilon}^{j+1}(x),
\end{align*}
which is equivalent to
\begin{align}
\label{eq:weak-form-tau}
\begin{split}
	\int_\Rd\varphi(x)\,d\rhote(t)(x)-&\int_\Rd\varphi(x)\,d\rhote(s)(x)+O(\tau^2)=\\
	&-\int_s^t\int_{\Rd}\nabla\varphi(x)\cdot \nabla V_\varepsilon* [F'(V_\varepsilon * \rho_\tau^\varepsilon(r))](x)\,d\rhote(r)(x)\,dr.
\end{split}
\end{align}

It remains to pass the limit $\tau\downarrow 0$ up to a subsequence for $\rho_\tau^\varepsilon \rightharpoonup \tilde{\rho}^\varepsilon$ as in~\Cref{prop:en-ineq-mom-bound}. More specifically, the result there states that $\rho_\tau^\varepsilon$ narrowly converges uniformly in $t\in[0,T]$ to $\tilde{\rho}^\varepsilon$ as (a subsequence of) $\tau\downarrow 0$. Clearly, the left-hand side of~\eqref{eq:weak-form-tau} passes easily in the limit $\tau\downarrow 0$ so we only focus on the right-hand side. Let us take the following statement for granted: for fixed $\varepsilon>0$ and almost every $r\in [0,T]$, we have
\begin{align}
	\label{eq:AEC}
	\begin{split}
	& \int_{\Rd}\!\!\nabla\varphi(x)\!\cdot\! \nabla V_\varepsilon* [F'(V_\varepsilon * \rho_\tau^\varepsilon(r))](x)\,d\rhote(r)(x) \\
	&\qquad \qquad  \to \int_{\Rd}\!\!\nabla\varphi(x)\!\cdot \!\nabla V_\varepsilon* [F'(V_\varepsilon * \tilde{\rho}^\varepsilon(r))](x)\,d\tilde{\rho}^\varepsilon(r)(x), \quad \tau\downarrow 0, \, \text{almost every }r\in[0,T].
	\end{split}
\end{align}
Passing to the limit $\tau\downarrow 0$ on the right-hand side of~\eqref{eq:weak-form-tau} reduces to finding an $L^1((s,t);dr)$ majorant, assuming~\eqref{eq:AEC} holds. By Young's convolution inequality and~\Cref{lem:bdd-comp-F-V}, we have
\[
|\nabla V_\varepsilon * [F'(V_\varepsilon*\rho_\tau^\varepsilon(r))](x)| \le \|\nabla V_\varepsilon\|_{L^1}\|F'\|_{L^\infty([0, \, \|V_\varepsilon\|_{L^\infty}])}.
\]
Overall, this implies the uniform estimate in $\tau>0$
\begin{align*}
&\quad \left|
\int_{\Rd}\nabla\varphi(x)\cdot \nabla V_\varepsilon* [F'(V_\varepsilon * \rho_\tau^\varepsilon(r))](x)\,d\rhote(r)(x)
\right| \le \|\nabla \varphi\|_{L^\infty}\|\nabla V_\varepsilon\|_{L^1}\|F'\|_{L^\infty([0, \, \|V_\varepsilon\|_{L^\infty}])}.
\end{align*}
Hence, we can pass to the limit $\tau\downarrow 0$ in the right-hand side of~\eqref{eq:weak-form-tau} and conclude.

Let us prove~\eqref{eq:AEC}. We fix $r\in[0,T]$ and henceforth drop the explicit dependence on this variable. We add and subtract
\begin{align}
&\quad \int_{\Rd}\nabla\varphi(x)\cdot \nabla V_\varepsilon* [F'(V_\varepsilon * \rho_\tau^\varepsilon)](x)\,d\rhote(x) - \int_{\Rd}\nabla\varphi(x)\cdot \nabla V_\varepsilon* [F'(V_\varepsilon * \tilde{\rho}^\varepsilon)](x)\,d\tilde{\rho}^\varepsilon(x) \notag\\
&=\int_{\supp \varphi}\nabla\varphi(x)\cdot \nabla V_\varepsilon* [F'(V_\varepsilon * \rho_\tau^\varepsilon) - F'(V_\varepsilon*\tilde{\rho}^\varepsilon)](x)\,d\rhote(x) 	\label{eq:diff-F'} 	\\
&\qquad \qquad \qquad \qquad \qquad \qquad \qquad + \int_{\supp \varphi}\nabla\varphi(x)\cdot \nabla V_\varepsilon* [F'(V_\varepsilon * \tilde{\rho}^\varepsilon)](x)\,d[\rho_\tau^\varepsilon -\tilde{\rho}^\varepsilon](x). 	\label{eq:diff-rho}
\end{align}
Fix small $\eta>0$ and find $R>1$ large enough such that $\int_{\Rd\setminus B_R}|\nabla V_\varepsilon(y)| dy < \eta$ where $B_R$ denotes the open ball of radius $R$ centred at the origin. We begin with the difference in~\eqref{eq:diff-F'} by expanding the convolution
\begin{align*}
	&\quad \left|\nabla V_\varepsilon* [F'(V_\varepsilon * \rho_\tau^\varepsilon) - F'(V_\varepsilon*\tilde{\rho}^\varepsilon)](x)\right| \le \int_{\Rd} |\nabla V_\varepsilon(y)| | F'(V_\varepsilon * \rho_\tau^\varepsilon(x-y)) - F'(V_\varepsilon*\tilde{\rho}^\varepsilon(x-y)) |dy.
\end{align*}
Up to a further subsequence, \Cref{cor:comp-f-conv} gives
\[
\sup_{x\in \supp\varphi, \, y \in \bar{B}_R} | F'(V_\varepsilon * \rho_\tau^\varepsilon(x-y)) - F'(V_\varepsilon*\tilde{\rho}^\varepsilon(x-y)) | < \eta,
\]
for $\tau>0$ sufficiently small. Hence, 
\begin{equation}
	\label{eq:near-field}
\int_{B_R} |\nabla V_\varepsilon(y)| | F'(V_\varepsilon * \rho_\tau^\varepsilon(x-y)) - F'(V_\varepsilon*\tilde{\rho}^\varepsilon(x-y)) |dy < \|\nabla V_\varepsilon\|_{L^1}\eta.
\end{equation}
Concerning the integral over $\Rd\setminus B_R$, we apply~\Cref{lem:bdd-comp-F-V} to obtain (uniformly in $\tau>0$)
\begin{equation}
	\label{eq:far-field}
\int_{\Rd\setminus B_R} |\nabla V_\varepsilon(y)| | F'(V_\varepsilon * \rho_\tau^\varepsilon(x-y)) - F'(V_\varepsilon*\tilde{\rho}^\varepsilon(x-y)) |dy < 2\|F'\|_{L^\infty([0, \, \|V_\varepsilon\|_{L^\infty}])} \eta.
\end{equation}
These inequalities imply that the integral in~\eqref{eq:diff-F'} can be made arbitrarily small in the limit $\tau\downarrow 0$.

Turning to the difference in~\eqref{eq:diff-rho}, we only need to show that $\nabla V_\varepsilon * [F'(V_\varepsilon*\tilde{\rho}^\varepsilon)]$ is continuous on $\supp \varphi$. Then, we can appeal to the narrow convergence $\rho_\tau^\varepsilon \rightharpoonup \tilde{\rho}^\varepsilon$ in duality with continuous and bounded functions. Suppose $x^n\in \supp\varphi$ is a sequence which converges to $x\in \supp\varphi$, we compare the difference
\begin{align*}
&\quad \nabla V_\varepsilon*F'(V_\varepsilon*\tilde{\rho}^\varepsilon)(x^n) - \nabla V_\varepsilon*F'(V_\varepsilon*\tilde{\rho}^\varepsilon)(x) \\
&= \int_{B_R} \nabla V_\varepsilon(y) [F'(V_\varepsilon*\tilde{\rho}^\varepsilon(x^n-y)) - F'(V_\varepsilon*\tilde{\rho}^\varepsilon(x-y))]dy 	\\
&\qquad \qquad \qquad \qquad \qquad \qquad \qquad  + \int_{\Rd\setminus B_R} \nabla V_\varepsilon(y) [F'(V_\varepsilon*\tilde{\rho}^\varepsilon(x^n-y)) - F'(V_\varepsilon*\tilde{\rho}^\varepsilon(x-y))]dy.
\end{align*}
The integral over $B_R$ can be made arbitrarily small as $n\to \infty$ owing to the uniform continuity of $F'(V_\varepsilon*\tilde{\rho}^\varepsilon(\cdot))$ from~\Cref{cor:comp-f-conv} and integrability of $\nabla V_\varepsilon$. This is similar to what is done for~\eqref{eq:near-field}. The other integral over $\Rd\setminus B_R$ can be made arbitrarily small by the same argument for~\eqref{eq:far-field}.
\end{proof}

\section{Compactness in the limit $\varepsilon\downarrow 0$}
\label{sec:compact}

This section discusses the construction of a limit $\rho$ for a subsequence of $\{\tilde{\rho}^\varepsilon\}_{\varepsilon>0}$. The key estimate is~\Cref{lem:h1-bound} which we are able to prove for general functions $F$ satisfying~\ref{ass:AGS-F}, \ref{ass:diff-reg}, and the growth conditions \ref{ass:F-m}.

Assumptions~\ref{ass:AGS-F}, \ref{ass:diff-reg}, and \ref{ass:F-m} cover all the power laws $F(x) = \frac{1}{m-1}|x|^m$ for $m>1$ and $F(x) = x\log x$ (corresponding to $m=1$). In the case $m>1$, \ref{ass:F-m} implies~\ref{ass:diff-F} since $F'$ can be extended to $x=0$. More precisely, if $F''$ satisfies the bounds in~\ref{ass:F-m} for some $m>1$, then $F''$ is locally integrable around 0. By the fundamental theorem of Calculus, 
\[
F'(x) = F'(1) - \int_x^1 F''(t)dt, \quad \forall x>0.
\]
Owing to Lebesgue's dominated convergence theorem, the right-hand side has a limit as $x\downarrow 0$ and therefore so does the left-hand side which we call $F'(0) := \lim_{x\downarrow 0} F'(x)$.
\begin{rem}[Comments on~\ref{ass:F-m}]
\label{rem:F-m}
Combining~\ref{ass:F-m} with the assumption $F(0)=0$ from~\ref{ass:AGS-F} gives, for $m>1$,
\[
\frac{c_1}{m(m-1)}x^m \le F(x) - F'(0)x \le \frac{c_2}{m(m-1)}x^m.
\]
The inequalities above and the uniform bound for $\mf^\varepsilon[\tilde{\rho}^\varepsilon(t)]$ from~\Cref{prop:en-ineq-mom-bound} yield the following integrability estimate uniform in $t\in[0,T]$ and $\varepsilon>0$
\begin{align}
	\label{eq:lmest}
 \begin{split}
&\quad \|V_\varepsilon * \tilde{\rho}^\varepsilon(t) \|_{L^m(\R^d)}^m \le \frac{m(m-1)}{c_1}\mf^\varepsilon[\tilde{\rho}^\varepsilon(t)] - \frac{m(m-1)}{c_1}F'(0)  \\
&\le \frac{m(m-1)}{c_1}\mf^\varepsilon[\rho_0]- \frac{m(m-1)}{c_1}F'(0) \le \frac{c_2}{c_1}\|V_\varepsilon*\rho_0\|_{L^m}^m \le \frac{c_2}{c_1}\|\rho_0\|_{L^m}^m.
\end{split}
\end{align}
Concerning the $m=1$ case, we directly estimate
\begin{equation}
    \label{eq:entest}
    \mathcal{H}[V_\varepsilon*\tilde{\rho}^\varepsilon(t)] = \mf^\varepsilon[\tilde{\rho}^\varepsilon(t)] \le \mf^\varepsilon[\rho_0] = \mathcal{H}[V_\varepsilon*\rho_0] \le \mathcal{H}[\rho_0].
\end{equation}
Here, we used Jensen's inequality with the convex function $x\log x$ and reference measure $V_\varepsilon$ to obtain $\mathcal{H}[V_\varepsilon*\rho_0]\le \int V_\varepsilon*(\rho_0\log \rho_0) = \int \rho\log \rho$ recalling $\int V_\varepsilon = 1$ as well as~\Cref{prop:en-ineq-mom-bound}.
\end{rem}
The sequence of solutions $\{\tilde{\rho}^\varepsilon\}_{\varepsilon>0}$ to~\eqref{eq:nlie-class} constructed in~\Cref{sec:nlie} is the candidate approximating \textit{weak solution} of~\eqref{eq:nonlinear-diffusion}. As $\{\tilde{\rho}^\varepsilon\}_{\varepsilon>0}$ is in general a sequence of measures, it is useful to consider the regularised version, $V_\varepsilon * \tilde{\rho}^\varepsilon$. For brevity, we drop the tilde on $\rho^\varepsilon$ from now on. First, we state compactness of $\{\rho^\varepsilon\}_{\varepsilon>0}$ in $C([0,T]; \mP_2(\R^d))$.
\begin{prop}
\label{prop:limit-rho}
There exists an absolutely continuous curve $\tilde{\rho}:[0,T]\to\mptrd$ such that the sequence $\{\rho^\varepsilon\}_{\varepsilon>0}$ admits a subsequence $\{\rho^{\varepsilon_k}\}$ such that $\rho^{\varepsilon_k}(t)$ narrow converges to $\tilde{\rho}(t)$ for any $t\in[0,T]$ as $k\to+\infty$.
\end{prop}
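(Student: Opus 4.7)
The strategy mirrors the proof of \Cref{prop:en-ineq-mom-bound}, with $\varepsilon\downarrow 0$ playing the role of $\tau\downarrow 0$. The plan is to apply the refined Ascoli--Arzel\`a principle \cite[Proposition~3.3.1]{AGS} to curves $[0,T]\to\mptrd$ endowed with the narrow topology, which reduces the claim to (i)~pointwise narrow relative compactness of $\{\rho^\varepsilon(t)\}_\varepsilon$ for each $t$, and (ii)~uniform-in-$\varepsilon$ equicontinuity in $d_W$.

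For (i), \Cref{prop:en-ineq-mom-bound} supplies $\sup_{t\in[0,T],\,\varepsilon\in(0,\varepsilon_0]}m_2(\rho^\varepsilon(t))\le C$ with $C$ independent of $\varepsilon$; Chebyshev's inequality together with Prokhorov's theorem then yield the sought narrow relative compactness. For (ii), the H\"older estimate~\eqref{eq:holder-cont}
\[
d_W(\rho_\tau^\varepsilon(s),\rho_\tau^\varepsilon(t))\le c\bigl(\sqrt{|t-s|}+\sqrt{\tau}\bigr)
\]
has a constant $c$ depending only on the uniform-in-$\varepsilon$ quantities $T$, $m_2(\rho_0)$, $\varepsilon_0^2 m_2(V_1)$, and $\sup_\varepsilon\mf^\varepsilon[\rho_0]$. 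Passing $\tau\downarrow 0$ along the subsequence already produced in~\Cref{prop:en-ineq-mom-bound} and using the lower semicontinuity of $d_W$ under narrow convergence with uniformly bounded second moments, this descends to
\[
d_W(\rho^\varepsilon(s),\rho^\varepsilon(t))\le c\sqrt{|t-s|}, \qquad s,t\in[0,T],\ \varepsilon\in(0,\varepsilon_0],
\]
which is the required uniform-in-$\varepsilon$ equicontinuity.

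With (i) and (ii) in hand, the refined Ascoli--Arzel\`a theorem extracts a subsequence $\varepsilon_k\downarrow 0$ and a curve $\tilde\rho:[0,T]\to\mptrd$ such that $\rho^{\varepsilon_k}(t)\rightharpoonup\tilde\rho(t)$ narrowly for every $t\in[0,T]$. A further application of the same narrow lower semicontinuity of $d_W$ transfers the $\frac12$-H\"older estimate to $\tilde\rho$, which in particular implies its absolute continuity as a curve into $(\mptrd,d_W)$. The only real subtlety is ensuring that all constants remain uniform as $\varepsilon\to 0$; this is already encoded in~\Cref{prop:en-ineq-mom-bound} through its $\varepsilon_0$-uniform bounds, together with the standing hypothesis $\sup_\varepsilon\mf^\varepsilon[\rho_0]<\infty$, which is in force in the regimes under consideration (c.f.~\Cref{rem:uniform-bound-initial-energy} and~\Cref{rem:F-m}).
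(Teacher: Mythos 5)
Your overall strategy is exactly the one the paper (and the reference it defers to, \cite{BE22}) uses: uniform second moments from \Cref{prop:en-ineq-mom-bound} give pointwise tightness, the $\varepsilon$-uniform H\"older estimate inherited from \eqref{eq:holder-cont} via lower semicontinuity of $d_W$ gives equicontinuity, and the refined Ascoli--Arzel\`a theorem \cite[Proposition 3.3.1]{AGS} produces the subsequence and the limit curve. All of those steps are correct, and your attention to the uniformity of the constants in $\varepsilon\le\varepsilon_0$ is exactly the right point to check.

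There is, however, one step that does not hold as stated: you conclude absolute continuity of $\tilde\rho$ from the $\tfrac12$-H\"older bound, but H\"older continuity of exponent $\tfrac12$ does \emph{not} imply absolute continuity of a curve in a metric space (there are $\tfrac12$-H\"older, nowhere differentiable real-valued functions, which are not absolutely continuous). Since the proposition explicitly asserts that $\tilde\rho$ is an absolutely continuous curve in $(\mptrd,d_W)$, this part needs a different argument. The standard fix uses the finer information already available from the scheme: the estimate \eqref{eq:total-square-en-estim} says that the piecewise constant ``discrete metric speed'' $m_{\tau,\varepsilon}(r):=d_W(\rho_{\tau,\varepsilon}^{k},\rho_{\tau,\varepsilon}^{k+1})/\tau$ for $r\in(k\tau,(k+1)\tau]$ is bounded in $L^2(0,T)$ uniformly in $\tau$ and $\varepsilon\le\varepsilon_0$, and
\[
d_W\bigl(\rho_\tau^\varepsilon(s),\rho_\tau^\varepsilon(t)\bigr)\le\int_{s'}^{t'}m_{\tau,\varepsilon}(r)\,dr
\]
for nearby grid points $s',t'$. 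Extracting a weak $L^2$ limit $m^\varepsilon$ (and then $m$) and using lower semicontinuity of $d_W$ under narrow convergence yields $d_W(\tilde\rho(s),\tilde\rho(t))\le\int_s^t m(r)\,dr$ with $m\in L^2(0,T)$, i.e.\ $\tilde\rho\in AC^2([0,T];\mptrd)$; equivalently one can pass the uniform $AC^2$ bound on the metric derivatives of $\tilde\rho^\varepsilon$ to the limit. With that replacement your proof is complete and coincides in substance with the paper's (cited) argument.
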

\begin{proof}
The proof is exactly the same as in~\cite[Proposition 4.1]{BE22} using a refined version of Ascoli-Arzel\`a~\cite[Proposition 3.3.1]{AGS}.
\end{proof}
The narrow convergence proven in~\Cref{prop:limit-rho} is not sufficient to pass to the limit $\varepsilon\downarrow 0$ from~\eqref{eq:nlie-class} to~\eqref{eq:nonlinear-diffusion}. For this reason, we study the sequence $v^\varepsilon(t) := V_\varepsilon * \rho^\varepsilon(t)$ for $t\in[0,T]$ (we drop the subscript $k$ for simplicity). We obtain higher regularity estimates uniform in $\varepsilon$ by using the \textit{flow interchange technique} developed by Matthes, McCann, and Savar\'e in~\cite{MMCS}. The strategy is to compute the dissipation of $\mf^\varepsilon$ along a solution of an \textit{auxiliary gradient flow}. This flow is chosen so that it satisfies an \textit{Evolution Variational Inequality (EVI)} which allows us to obtain the desired estimate leading to compactness.

Since the seminal work of Jordan, Kinderlehrer, and Otto~\cite{JKO98}, it is known that the heat equation can be interpreted as the 2-Wasserstein gradient flow of the Boltzmann entropy $\mathcal{H}$ (see below for the precise definition). Moreover the heat semigroup, denoted by $S_\mathcal{H}$, is a 0-flow in the following sense.
\begin{defn}[$\lambda$-flow]
	A semigroup $S_{\mathcal{E}}:[0,+\infty]\times\mptrd\to\mptrd$ is a $\lambda$-flow for a functional $\mathcal{E}:\mptrd\to\R\cup\{+\infty\}$ with respect to the distance $d_W$ if, for an arbitrary $\rho\in\mptrd$, the curve $t\mapsto S_{\mathcal{E}}^t\rho$ is absolutely continuous on $[0,+\infty[$ and it satisfies the evolution variational inequality (EVI)
	\begin{equation}
		\frac{1}{2}\frac{d^+}{dt}d_W^2(S_{\mathcal{E}}^t\rho,\bar{\rho})+\frac{\lambda}{2}d_W^2(S_{\mathcal{E}}^t\rho,\bar{\rho})\le \mathcal{E}(\bar{\rho})-\mathcal{E}(S_{\mathcal{E}}^t\rho)
	\end{equation}
	for all $t\ge 0$, with respect to every reference measure $\bar{\rho}\in\mptrd$ such that $\mathcal{E}(\bar{\rho})<\infty$.
\end{defn}
Below we use the flow interchange by considering the heat equation as an auxiliary flow with respect to the Boltzmann entropy
\begin{equation}\label{eq:aux-func}
	\mh[\rho]=
	\begin{cases}
		\int_{\Rd}\rho(x)\log\rho(x)\,dx, &\rho \ll \text{Leb}(\Rd)\\
		+\infty, & \text{otherwise}
	\end{cases}.
\end{equation}
Again, when $\rho$ is an absolutely continuous measure with respect to Lebesgue, we identify its density as $\rho(x)$.
\begin{rem}
\label{rem:control_below_entropy}
We remind the reader that $\mathcal{H}[\rho]$ is bounded below by $m_2(\rho)$. This can be seen by looking at the \textit{relative entropy} with respect to the standard Gaussian on $\R^d$ denoted by $\mathcal{M}(x) = (2\pi)^{-d}\exp\{-|x|^2/2\}$. For any $\rho\in \mpdtard$, Jensen's inequality with the convex function $x\log x$ gives
\begin{align*}
\mathcal{H}[\rho \, |\, \mathcal{M}] &:= \int_{\R^d}\rho(x) \log \frac{\rho(x)}{\mathcal{M}(x)} dx = \int_{\R^d} \frac{\rho(x)}{\mathcal{M}(x)}\log \frac{\rho(x)}{\mathcal{M}(x)} \mathcal{M}(x)dx 	\\
&\ge \left(
\int_{\R^d} \frac{\rho(x)}{\mathcal{M}(x)} \mathcal{M}(x)dx
\right) \log \left(
\int_{\R^d} \frac{\rho(x)}{\mathcal{M}(x)} \mathcal{M}(x)dx
\right) = 0.
\end{align*}
This gives the lower bound for the entropy
\begin{align*}
\mathcal{H}[\rho] \ge \int_{\R^d}\rho(x)\log \mathcal{M}(x)dx = -d\log 2\pi - \frac{1}{2}m_2(\rho).
\end{align*}
\end{rem}
In the following, for any $\nu\in\mptrd$ such that $\mh(\nu)<+\infty$, we denote by $S_{\mh}^t\nu$ the solution at time $t$ of the heat equation coupled with an initial value $\nu$ at $t=0$. Moreover, for every $\rho\in\mptrd$, we define the dissipation of $\mf^\varepsilon$ along $S_{\mh}$ by
$$
D_{\mh}\mf^\varepsilon(\rho):=\limsup_{s\downarrow0}\left\{\frac{\mf^\varepsilon[\rho]-\mf^\varepsilon[S_{\mh}^s\rho]}{s}\right\}.
$$
In order to prove stronger compactness, we begin with an $L_t^2H_x^1$ estimate on the $\frac{m}{2}$ power of $v_\tau^\varepsilon = V_\varepsilon * \rho_\tau^\varepsilon$. This generalises Lemma 4.1 from~\cite{BE22}.
\begin{lem}
\label{lem:h1-bound}
Suppose $F$ satisfies~\ref{ass:AGS-F}, \ref{ass:diff-reg}, and~\ref{ass:F-m} for some $m\ge 1$. Let $\rho_0\in \mpdtard\cap L^m(\R^d)$. In the case $m=1$, assume further $\mathcal{H}[\rho_0] < +\infty$. Then, there exists a constant $C=C(\rho_0, \, V_1, \, T) >0$ such that
\[
\sup_{\varepsilon, \, \tau>0}\left\|
(v_\tau^\varepsilon)^\frac{m}{2}
\right\|_{L^2(0,T;\, H^1(\R^d))} \le C.
\]
\end{lem}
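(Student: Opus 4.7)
The plan is to apply the flow interchange technique of Matthes, McCann, and Savar\'e~\cite{MMCS}, with the heat semigroup $S_\mh$ as auxiliary $0$-flow for the Boltzmann entropy $\mh$ in~\eqref{eq:aux-func}. For fixed $n$, I perturb $\rhotnne$ to $S_\mh^s\rhotnne$ inside the JKO minimality~\eqref{eq:jko}; combining this with the integrated EVI for $S_\mh$ (against reference measure $\rhotne$), dividing by $s$, and letting $s\downarrow 0$, monotonicity of $\mh$ along its own flow together with lower semicontinuity produce the one-step dissipation inequality
\[
\tau\, D_\mh\mf^\varepsilon(\rhotnne) \le \mh[\rhotne] - \mh[\rhotnne].
\]

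Setting $v:=V_\varepsilon*\rho$ and using that the heat flow commutes with convolution by $V_\varepsilon$, the time derivative of $s\mapsto\mf^\varepsilon[S_\mh^s\rho]$ at $s=0$, followed by integration by parts, yields
\[
D_\mh\mf^\varepsilon(\rho) = \int_{\Rd}F''(v)|\nabla v|^2\,dx.
\]
Combining assumption~\ref{ass:F-m} ($F''(v)\ge c_1 v^{m-2}$) with the pointwise identity $v^{m-2}|\nabla v|^2 = (4/m^2)|\nabla v^{m/2}|^2$ then gives
\[
\frac{4c_1\tau}{m^2}\int_{\Rd}\bigl|\nabla (v_{\tau,\varepsilon}^{n+1})^{m/2}\bigr|^2\,dx \le \mh[\rhotne] - \mh[\rhotnne].
\]

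Summing from $n=0$ to $n=N-1$ telescopes the right-hand side to $\mh[\rho_0]-\mh[\rho_{\tau,\varepsilon}^N]$. The negative part of the latter is controlled by Remark~\ref{rem:control_below_entropy} and the uniform moment bound from Proposition~\ref{prop:en-ineq-mom-bound}. The term $\mh[\rho_0]$ is finite by direct assumption when $m=1$; for $m>1$, $\rho_0\in L^m(\Rd)\cap\mptrd$ still forces $\mh[\rho_0]<+\infty$ by controlling the positive part of $\rho_0\log\rho_0$ by $\|\rho_0\|_{L^m}^m$ and the negative part by second moments through a standard comparison $-\rho_0\log\rho_0\le C_\delta\rho_0^{1-\delta}$ with $\delta<2/(d+2)$. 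This yields a uniform bound on $\|\nabla(v_\tau^\varepsilon)^{m/2}\|_{L^2((0,T)\times\Rd)}$, while the $L^2_tL^2_x$ bound on $(v_\tau^\varepsilon)^{m/2}$ itself is immediate from $\|(v_\tau^\varepsilon)^{m/2}\|_{L^2_x}^2 = \|v_\tau^\varepsilon\|_{L^m_x}^m$ together with the uniform $L^m$ estimate~\eqref{eq:lmest} in Remark~\ref{rem:F-m}.

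The main technical obstacle is to make the one-step inequality fully rigorous despite the iterates $\rhotne$ not being \emph{a priori} absolutely continuous, and to keep the intermediate entropies finite so the telescoping becomes a genuine equality. The latter follows inductively from the one-step inequality itself: since $D_\mh\mf^\varepsilon\ge 0$, it forces $\mh[\rhotnne]\le\mh[\rhotne]$, so finite $\mh[\rho_0]$ propagates along the JKO sequence. The former is handled by shifting all derivatives onto the smooth function $v=V_\varepsilon*\rho$ using the regularity of $V_1$ prescribed in~\ref{ass:v1}, so the dissipation formula only involves objects to which standard calculus on smooth functions applies.
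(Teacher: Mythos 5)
Your proposal is correct and follows essentially the same route as the paper: flow interchange along the heat semigroup as a $0$-flow for $\mh$, the one-step inequality $\tau D_\mh\mf^\varepsilon(\rhotnne)\le\mh[\rhotne]-\mh[\rhotnne]$, the lower bound $F''\ge c_1 x^{m-2}$ to produce $|\nabla (v)^{m/2}|^2$, telescoping with the entropy-versus-moment lower bound of Remark~\ref{rem:control_below_entropy}, and the $L^m$ estimate of Remark~\ref{rem:F-m} for the non-gradient part. The only presentational difference is that you assert the dissipation identity $D_\mh\mf^\varepsilon(\rho)=\int F''(v)|\nabla v|^2\,dx$ exactly at $s=0$, whereas the paper only needs (and proves, via $L^m$-continuity of the heat semigroup and weak lower semicontinuity of the $H^1$ seminorm) the corresponding one-sided inequality, which is slightly easier to justify; your inductive remark on propagation of finite entropy along the JKO iterates is consistent with, and implicit in, the paper's telescoping step.
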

\begin{proof}
If $m=1$, then the $L_t^2L_x^2$ bound simply reads
\begin{align*}
	\left\| (v_\tau^\varepsilon)^\frac{1}{2}\right\|_{L^2(0,T;\, L^2(\R^d))}^2 = \int_0^T\int_{\R^d} V_\varepsilon*\rho_\tau^\varepsilon(t,x) \, dx dt = T,
\end{align*}
since both $\|V_\varepsilon\|_{L^1} = \int_{\R^d}d\rho_\tau^\varepsilon(t)(x) = 1$. For $m > 1$, the estimate is very similar to that of~\eqref{eq:lmest} applied to the pre-limit curves $\rhote$,
\begin{align*}
	\left\|(v_\tau^\varepsilon)^\frac{m}{2} \right\|_{L^2([0,T]; L^2(\R^d))}^2 &= \int_0^T\|V_\varepsilon*\rhote\|_{L^m}^m \,dt\le \frac{c_2 T}{c_1}\|\rho_0\|_{L^m}^m.
\end{align*}
The rest of this proof focuses on the uniform bound for $\nabla (v_\tau^\varepsilon)^\frac{m}{2}$. For $s>0$, we take $S_\mh^s\rho_{\tau, \varepsilon}^{n+1}$ as a competitor against $\rho_{\tau, \varepsilon}^{n+1}$ in the minimisation problem~\eqref{eq:jko}. We thus have
\[
\frac{1}{2\tau}d_W^2(\rhotnne,\rhotne)+\mf^\varepsilon[\rhotnne]\le\frac{1}{2\tau}d_W^2(S_{\mh}^s\rhotnne,\rhotne)+\mf^\varepsilon[S_{\mh}^s\rhotnne],
\]
which, dividing by $s>0$ and passing to $\limsup_{s\downarrow 0}$, gives
\begin{equation}\label{eq:flow-interchange}
	\tau D_{\mh}\mf^\varepsilon(\rhotnne)\le\left.\frac{1}{2}\frac{d^+}{dt}\right|_{t=0}\Big(d_W^2(S_{\mh}^t\rhotnne,\rhotne)\Big)\overset{\bm{(E.V.I.)}}{\le}\mh[\rhotne]-\mh[\rhotnne].
\end{equation}
In the last inequality we used that $S_\mh$ is a $0$-flow. Now, let us focus on the left hand side of~\eqref{eq:flow-interchange}. Firstly, note that
\begin{equation}\label{eq:integral-form-dis}
	\begin{split}
		D_{\mh}\mf^\varepsilon(\rhotnne)&=\limsup_{s\downarrow0}\left\{\frac{\mf^\varepsilon[\rhotnne]-\mf^\varepsilon[S_{\mh}^s\rhotnne]}{s}\right\}\\&=\limsup_{s\downarrow0}\int_0^1\left(-\frac{d}{dz}\Big|_{z=st}\mf^\varepsilon[S_{\mh}^{z}\rhotnne]\right)\,dt.
	\end{split}
\end{equation}
Thus, we now compute the time derivative inside the above integral. Using integration by parts, the $C^\infty$ regularity of the heat semigroup, and~\ref{ass:F-m}, we have
\begin{equation}\label{eq:deriv-dis}
	\begin{split}
		&\quad \frac{d}{dt}\mf^\varepsilon[S_{\mh}^t\rhotnne] =-\int_\Rd F''(V_\varepsilon* S_\mh^t\rhotnne)|\nabla V_\varepsilon* S_\mh^t\rhotnne|^2 dx \\
		&\le -c_1 \int_{\R^d} (V_\varepsilon* S_\mh^t\rhotnne)^{m-2}|\nabla V_\varepsilon* S_\mh^t\rhotnne|^2 dx =-\frac{4c_1}{m^2}\int_{\R^d}\left|
		\nabla (V_\varepsilon* S_\mh^t\rhotnne)^\frac{m}{2}
		\right|^2 dx.
	\end{split}
\end{equation}
The previous computation is justified since $S_{\mh}^t\rhotnne>0$ everywhere on $\Rd$ so there is no division by zero. By substituting~\eqref{eq:deriv-dis} into~\eqref{eq:integral-form-dis}, from~\eqref{eq:flow-interchange} we obtain
\[
\tau\liminf_{s\downarrow0}\int_0^1\int_{\Rd}\left|\nabla (V_\varepsilon*S_{\mh}^{st}\rhotnne)^\frac{m}{2}(x)\right|^2\,dx\,dt\le\frac{m^2}{4c_1}\left(\mh[\rhotne]-\mh[\rhotnne]\right).
\]
In order to pass to the limit $s\downarrow 0$ for $m>1$, we first deduce $V_\varepsilon * \rhotnne\in L^m$ by~\ref{ass:F-m} and~\Cref{prop:en-ineq-mom-bound}. Second, by standard properties of the heat semigroup, we obtain $V_\varepsilon * S_\mh^{st}\rhotnne \to V_\varepsilon * \rhotnne$ in $L^m$ as $s\downarrow 0$. Notice that the first and second steps are immediate for $m=1$. Third, by the inequality
\[
\left|
(V_\varepsilon*S_{\mh}^{st}\rhotnne)^\frac{m}{2} - (V_\varepsilon*\rhotnne)^\frac{m}{2}
\right|^2 \le 2\left(
(V_\varepsilon*S_{\mh}^{st}\rhotnne)^m + (V_\varepsilon*\rhotnne)^m
\right),
\]
we can apply~\Cref{thm:EDCT} to deduce $(V_\varepsilon*S_{\mh}^{st}\rhotnne)^\frac{m}{2} \to (V_\varepsilon*\rhotnne)^\frac{m}{2}$ in $L^2$ as $s\downarrow 0$. Finally, the weak $L^2$ lower semi-continuity of the $H^1$ semi-norm gives
\[
\tau\int_{\Rd}\left|\nabla |V_\varepsilon*\rhotnne|^\frac{m}{2}(x)\right|^2\,dx\le\frac{m^2}{4c_1}\left(\mh[\rhotne]-\mh[\rhotnne]\right).
\]
By summing up over $n$ from $0$ to $N-1$, taking into account~\Cref{rem:control_below_entropy} and that second order moments are uniformly bounded (see~\Cref{prop:en-ineq-mom-bound}), we get
\begin{equation}\label{eq:bound-nablaveps}
	\int_0^T\int_{\Rd}\left|\nabla| V_\varepsilon*\rhote(t)|^\frac{m}{2}(x)\right|^2\,dx\,dt\le \frac{m^2}{4c_1}\left(\mh[\rho_0]-\mh[\rhotne]\right)\le \frac{m^2}{4c_1}\left(\mh[\rho_0] + C(\rho_0, \, V_1, \, T)\right).
\end{equation}
For $m=1$, the initial entropy is assumed to be bounded. For $m>1$, since $x\log x \le x^m$ for any $x\ge 0$, we always have $\mh[\rho_0]\le \|\rho_0\|_{L^m}^m$. In both cases, the initial entropy is bounded, and this establishes the desired $L_{t,\,x}^2$ bound for $\nabla(v_\tau^\varepsilon)^\frac{m}{2}$.
\end{proof}

The strong $L^m$ compactness in time and space follows by applying a refined version of the Aubin-Lions Lemma due to Rossi and Savar\'{e} \cite[Theorem 2]{RS}. For the reader's convenience we recall the latter result below before presenting the compactness result for $\{v^{\varepsilon_k}\}_k$.

\begin{prop}\cite[Theorem 2]{RS}\label{prop:aulirs-meas}
Let $X$ be a separable Banach space. Consider
\begin{itemize}
\item a lower semicontinuous functional $\mathscr{F}:X\to[0,+\infty]$ with relatively compact sublevels in $X$;
\item a pseudo-distance $g:X\times X\to[0,+\infty]$, i.e., $g$ is lower semicontinuous and such that $g(\rho,\eta)=0$ for any $\rho,\eta\in X$ with $\mathscr{F}(\rho)<\infty$, $\mathscr{F}(\eta)<\infty$ implies $\rho=\eta$.
\end{itemize}
Let $U$ be a set of measurable functions $u:(0,T)\to X$, with a fixed $T>0$. Assume further that
\begin{equation}\label{hprossav}
\sup_{u\in U}\int_{0}^T\mathscr{F}(u(t))\,dt<\infty\quad \text{and}\quad \lim_{h\downarrow0}\sup_{u\in U}\int_{0}^{T-h}g(u(t+h),u(t))\,dt=0\,.
\end{equation}
Then $U$ contains an infinite sequence $(u_n)_{n\in\mathbb{N}}$ that converges in measure, with respect to $t\in(0,T)$, to a measurable $\tilde{u}:(0,T)\to X$, i.e.
\[
\lim_{n\to\infty}|\{t\in(0,T):\|u_n(t)-u(t)\|_X\ge\sigma\}|=0, \quad \forall \sigma>0.
\]
\end{prop}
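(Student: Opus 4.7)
The plan is to adapt the classical Aubin--Lions strategy by using the compact-sublevel structure of $\mathscr{F}$ in place of a compact embedding $X \hookrightarrow B$, and the integrated pseudo-distance control from $g$ in place of a uniform bound on time derivatives. The first step is a ``vertical'' (in time) tightness reduction: applying Markov's inequality to $\sup_{u \in U}\int_0^T \mathscr{F}(u(t))\,dt \le C$, for every $\delta > 0$ there exists $M > 0$ such that $|\{t \in (0,T) : \mathscr{F}(u(t)) > M\}| < \delta$ uniformly in $u \in U$. On the complementary set, the values $u(t)$ lie in the sublevel $K_M := \{v \in X : \mathscr{F}(v) \le M\}$, which by hypothesis is relatively compact in $X$.

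Next, I would perform a Cantor diagonal extraction. Fix a countable dense subset $\{t_j\} \subset (0,T)$. For each $t_j$, after discarding a small-measure exceptional set, the sequence $\{u_n(t_j)\}_n$ eventually lies in some $K_{M}$ and therefore admits a subsequence converging strongly in $X$. The diagonal procedure then yields a single subsequence along which $u_n(t_j)$ converges in $X$ for every $j$ to some limit $\tilde u(t_j)$.

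The ``horizontal'' (across time) propagation step relies on the second condition in~\eqref{hprossav}. Applying Markov in the $t$-variable, for each $\sigma > 0$ the measure of $\{t \in (0,T-h) : g(u(t+h), u(t)) \ge \sigma\}$ tends to zero as $h \downarrow 0$ uniformly in $u \in U$. This is the Rossi--Savar\'e analogue of time equicontinuity. Combining this with the pointwise convergence at $\{t_j\}$ from the previous step shows that $\{u_n(t)\}_n$ is Cauchy in the $g$-topology for almost every $t \in (0,T)$, up to passing to a further subsequence.

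The main obstacle is bridging the gap between convergence in $g$ and convergence in the $X$-norm. Since $g$ is only a lower semicontinuous pseudo-distance (a priori not globally equivalent to $\|\cdot\|_X$), the key observation is that on the compact set $K_M$ the separation property ``$g(\rho,\eta)=0$ with $\mathscr{F}(\rho), \mathscr{F}(\eta) < \infty$ implies $\rho = \eta$'' combined with the $X$-compactness of $K_M$ forces the $g$-topology and the $X$-norm topology to coincide on $K_M$, via the standard fact that a continuous bijection from a compact Hausdorff space to a Hausdorff space is a homeomorphism. This equivalence promotes the $g$-equicontinuity to an $X$-norm equicontinuity in measure, producing convergence in measure of $u_n$ to a measurable $\tilde u$. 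A final subsequence argument, together with the uniqueness of limit granted by the separation of $g$ on sublevels, pins down the measurable limit curve asserted in the statement.
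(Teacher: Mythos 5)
You should first note that the paper does not prove this proposition at all: it is quoted verbatim from Rossi and Savar\'e \cite{RS} (Theorem 2 there), so the only meaningful comparison is with that proof. Your attempt, as written, has a fatal gap at its core, namely the Cantor diagonal extraction over a countable dense set of times $\{t_j\}$. Both hypotheses in \eqref{hprossav} are time-integral conditions, hence completely insensitive to modifying each $u_n$ on a Lebesgue-null set of times, whereas your argument uses the values $u_n(t_j)$ at fixed times. Concretely, replace each $u_n$ by $\tilde u_n$ with $\tilde u_n=u_n$ off $\{t_j\}_j$ and $\tilde u_n(t_j):=n\,v_0$ for some fixed $v_0\neq 0$: the quantities in \eqref{hprossav} are unchanged, but no subsequence of $\tilde u_n(t_j)$ can converge, so the claim that ``after discarding a small-measure exceptional set, $\{u_n(t_j)\}_n$ eventually lies in some sublevel'' is false --- the exceptional sets $\{t:\mathscr{F}(u_n(t))>M\}$ depend on $n$, and a fixed $t_j$ may lie in all of them. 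Since there is no pointwise-in-time tightness and no genuine (pointwise) equicontinuity, only an averaged one in the pseudo-distance $g$, the Ascoli-type scheme ``convergence on a dense set of times plus equicontinuity'' cannot even get started; this is precisely what makes the Rossi--Savar\'e theorem nontrivial compared with classical Aubin--Lions statements.

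The subsequent steps are also not sound as written. The pseudo-distance $g$ is only lower semicontinuous and is not assumed symmetric or to satisfy a triangle inequality, so it does not induce a topology: ``Cauchy in the $g$-topology'' is not meaningful, and the compact-Hausdorff continuous-bijection argument requires continuity of $g$, which is not available. What is true, and is the correct substitute, is a uniform comparison on sublevels: for every $M,\sigma>0$ there is $\delta>0$ such that $x,y\in\{v\in X:\mathscr{F}(v)\le M\}$ and $g(x,y)\le\delta$ imply $\|x-y\|_X\le\sigma$; this follows by contradiction from the compactness of the (closed) sublevel, the lower semicontinuity of $g$, and the separation property. But even with this lemma your ``horizontal propagation'' does not close, because the Markov exceptional sets depend on $n$ and on the shift $h$, and the comparison with values at fixed times $t_j$ is exactly what is unavailable. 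For the record, the proof in \cite{RS} runs along genuinely different lines: the curves are encoded as Young measures on $(0,T)\times X$; the first condition in \eqref{hprossav}, together with the compactness of the sublevels of $\mathscr{F}$, yields tightness and hence narrow compactness of these Young measures, and the second condition combined with the lower semicontinuity of $g$ and the separation property forces the limit $(\nu_t)_t$ to be a Dirac mass $\delta_{\tilde u(t)}$ for a.e.\ $t$, which is equivalent to convergence in measure of a subsequence. If you want a proof rather than a citation, that is the route to follow; an argument based on evaluations at fixed times cannot work for merely measurable curves.
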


The two conditions in \eqref{hprossav} are called \textit{tightness} and \textit{weak integral equicontinuity}, respectively.
\begin{prop}\label{prop:strong-convergence-v}
Fix $m\ge 1$ and consider the family $\{v_\tau^\varepsilon\}_{\varepsilon\in(0,\varepsilon_0), \tau>0}$ in~\Cref{lem:h1-bound}. There is a subsequence $\tau_k\downarrow 0$ such that for any $\varepsilon>0$, we have
\[
v_{\tau_k}^\varepsilon \to v^\varepsilon = V_\varepsilon*\tilde{\rho}^\varepsilon, \quad \text{in }L^m([0,T]\times \R^d).
\]
Moreover, there is a subsequence $\varepsilon_k\downarrow 0$ and a curve $v\in C([0,T];\mptrd)\cap L^m([0,T]\times \R^d)$ such that
\[
v^\varepsilon \to v, \quad \text{in }L^m([0,T]\times\R^d).
\]
\end{prop}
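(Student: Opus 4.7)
The plan is to apply the Rossi--Savar\'e Aubin--Lions variant, Proposition~\ref{prop:aulirs-meas}, to handle both limits, following the strategy of~\cite{BE22}. The $\tau\downarrow 0$ limit at fixed $\varepsilon$ is simpler because convolution with $V_\varepsilon$ provides strong uniform $L^\infty$ and Lipschitz bounds; it can be handled by truncation and dominated convergence without invoking Rossi--Savar\'e. The $\varepsilon\downarrow 0$ limit requires the full machinery of Proposition~\ref{prop:aulirs-meas} together with the $L^2_tH^1_x$ bound from Lemma~\ref{lem:h1-bound}.

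For the first limit at fixed $\varepsilon>0$, the uniform narrow convergence $\rho_{\tau_k}^\varepsilon(t)\rightharpoonup\tilde\rho^\varepsilon(t)$ from Proposition~\ref{prop:en-ineq-mom-bound}, combined with the continuity and boundedness of $V_\varepsilon(x-\cdot)$, yields pointwise convergence $v_{\tau_k}^\varepsilon(t,x)\to v^\varepsilon(t,x)$ for every $(t,x)\in[0,T]\times\R^d$. Young's inequality gives $\|v_\tau^\varepsilon\|_{L^\infty}\le \|V_\varepsilon\|_{L^\infty}$, and a Chebyshev estimate using the uniform bound
\[
m_2(V_\varepsilon*\rho_\tau^\varepsilon(t))\le 2m_2(\rho_\tau^\varepsilon(t))+2\varepsilon^2 m_2(V_1)
\]
from Proposition~\ref{prop:en-ineq-mom-bound} provides uniform tail decay $\int_{|x|>R}v_\tau^\varepsilon(t,x)^m\,dx\le \|V_\varepsilon\|_{L^\infty}^{m-1}C/R^2$. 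Truncating to $[0,T]\times B_R$, applying the standard dominated convergence theorem there, and then letting $R\to\infty$ produces a common subsequence $\tau_k\downarrow 0$ along which $v_{\tau_k}^\varepsilon\to v^\varepsilon$ in $L^m([0,T]\times\R^d)$ for each $\varepsilon$.

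For the second limit, I apply Proposition~\ref{prop:aulirs-meas} with $X=L^m(\R^d)$, the tightness functional
\[
\mathscr{F}(u):=\|u\|_{L^m}^m+\|u^{m/2}\|_{H^1(\R^d)}^2+m_2(u)
\]
on $\mptrd\cap L^m(\R^d)$ (extended by $+\infty$ otherwise), and the pseudo-distance $g(u,v):=d_W(u,v)$ on $\mptrd$. Tightness $\sup_\varepsilon\int_0^T\mathscr{F}(v^\varepsilon(t))\,dt<\infty$ is immediate from Remark~\ref{rem:F-m} (uniform $L^m$ bound), Lemma~\ref{lem:h1-bound} (the $L^2_tH^1_x$ bound on $(v^\varepsilon)^{m/2}$), and the uniform moment control on $v^\varepsilon=V_\varepsilon*\tilde\rho^\varepsilon$. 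For weak integral equicontinuity, one observes that convolution by $V_\varepsilon$ is $1$-Lipschitz in $d_W$ (via coupling with an independent $V_\varepsilon$-distributed random variable) and passes to the limit $\tau\downarrow 0$ in the H\"older estimate~\eqref{eq:holder-cont} using narrow lower-semicontinuity of $d_W$, so that $d_W(v^\varepsilon(t+h),v^\varepsilon(t))\le c\sqrt{h}$ uniformly in $\varepsilon$, and $\sup_\varepsilon\int_0^{T-h}d_W(v^\varepsilon(t+h),v^\varepsilon(t))\,dt\le cT\sqrt{h}\to 0$ as $h\downarrow 0$.

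The main technical obstacle is verifying that sublevel sets $\{\mathscr{F}\le M\}$ are relatively compact in $L^m(\R^d)$. Given such a sequence $\{u_n\}$, the $H^1$ bound on $u_n^{m/2}$ together with Rellich--Kondrakov yields (up to subsequence) convergence $u_n^{m/2}\to w$ in $L^2_{\mathrm{loc}}(\R^d)$, hence $u_n\to w^{2/m}$ in $L^m_{\mathrm{loc}}(\R^d)$. For uniform tail decay, Sobolev embedding delivers a uniform bound $\|u_n\|_{L^p}\le C$ for some $p>m$ (with the usual dimensional modification when $d\le 2$), and interpolating between the tail bound $\|u_n\mathbf{1}_{|x|>R}\|_{L^1}\le M/R^2$ (from the moment bound) and this $L^p$ bound gives $\|u_n\mathbf{1}_{|x|>R}\|_{L^m}\to 0$ uniformly in $n$. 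Rossi--Savar\'e then yields a subsequence $v^{\varepsilon_k}$ converging in measure on $(0,T)$ with values in $L^m(\R^d)$; since $\|v^\varepsilon(t)-v(t)\|_{L^m}\le C$ for a.e.\ $t$ uniformly in $\varepsilon$, Vitali's convergence theorem upgrades this to the desired strong $L^m([0,T]\times\R^d)$ convergence. The limit curve $v$ inherits membership in $C([0,T];\mptrd)$ from narrow lower-semicontinuity of $d_W$ applied to the uniform H\"older estimate above.
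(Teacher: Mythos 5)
Your proposal is correct in substance and differs from the paper's proof in an interesting way. For the $\tau\downarrow 0$ limit at fixed $\varepsilon$ you bypass \Cref{prop:aulirs-meas} entirely: pointwise convergence of $v_{\tau_k}^\varepsilon$ from the uniform narrow convergence in \Cref{prop:en-ineq-mom-bound} (since $V_\varepsilon(x-\cdot)\in C_b(\Rd)$), the bound $\|v_\tau^\varepsilon\|_{L^\infty}\le\|V_\varepsilon\|_{L^\infty}$, and the Chebyshev tail estimate coming from the uniform second moments give strong $L^m$ convergence by truncation and dominated convergence. This is legitimate and in fact tidier than the paper's route, which goes through weak $L^m$ convergence, Rossi--Savar\'e convergence in measure, a splitting argument, and a separate treatment of $m=1$ via uniform integrability; your argument treats all $m\ge1$ at once and makes the ``one subsequence $\tau_k$ for every $\varepsilon$'' issue transparent. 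For the $\varepsilon\downarrow 0$ limit you follow the same Rossi--Savar\'e scheme as the paper, with cosmetic variations: $g=d_W$ instead of $d_1$, a second-moment term instead of the first moment in $\mathscr{F}$, Rellich--Kondrachov plus moment/Sobolev interpolation for the tails instead of Riesz--Fr\'echet--Kolmogorov, and equicontinuity via the $1$-Lipschitz property of $\rho\mapsto V_\varepsilon*\rho$ in $d_W$ combined with the $\tau\downarrow0$ limit of \eqref{eq:holder-cont}. All of these substitutions work (note only that lower semicontinuity of $\mathscr{F}$ and the identification $v\in\mptrd$ for limits rely on the moment term to rule out mass escaping to infinity, and that continuity of $w\mapsto w^{2/m}$ from $L^2$ to $L^m$ needs a word for $1\le m<2$).

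The one genuine gloss is your claim that tightness for the family $\{v^\varepsilon\}_\varepsilon$ is ``immediate from \Cref{lem:h1-bound}'': that lemma bounds $(v_\tau^\varepsilon)^{m/2}$, not $(v^\varepsilon)^{m/2}$, so before applying \Cref{prop:aulirs-meas} in $\varepsilon$ you must transfer the uniform $L^2(0,T;H^1(\Rd))$ bound to the limit curves --- this is precisely the step \eqref{eq:h1_v^eps} in the paper. The fix is routine with what you already have: your first part gives $v_{\tau_k}^\varepsilon\to v^\varepsilon$ strongly in $L^m$, hence (up to a further subsequence) almost everywhere, and together with the uniform bound $\|v_\tau^\varepsilon\|_{L^\infty}\le\|V_\varepsilon\|_{L^\infty}$ this yields $(v_{\tau_k}^\varepsilon)^{m/2}\to(v^\varepsilon)^{m/2}$ in $L^2_{\mathrm{loc}}$; identifying the weak $L^2$ limit of $\nabla(v_{\tau_k}^\varepsilon)^{m/2}$ and using weak lower semicontinuity of the $H^1$ seminorm then gives $\sup_{\varepsilon}\|(v^\varepsilon)^{m/2}\|_{L^2(0,T;H^1)}\le C$. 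With that sentence added, your argument is complete.
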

\begin{proof}
	The proof of the result is obtained by applying~\Cref{prop:aulirs-meas} to a subset of the sequence $U:=\{v_\tau^\varepsilon\}_{\varepsilon\in(0,\varepsilon_0), \tau >0}$ for $X:=L^m(\Rd)$ and $g:=d_1$ being the $1$-Wasserstein distance --- extended to $+\infty$ outside of $\mP_1(\Rd)\times\mP_1(\Rd)$. As for the functional, we consider $\mathscr{F}:L^m(\Rd)\to[0,+\infty]$ defined by
	\begin{equation*}
		\mathscr{F}[v]=
		\begin{cases}
			\left\|v^\frac{m}{2}\right\|_{H^1(\Rd)}^2 + \int_{\R^d}|x|v(x)\, dx, & \text{if } v\in\mP_1(\Rd) \mbox{ and } v^{\frac{m}{2}}\in H^1(\Rd);\\
			+\infty, & \text{otherwise}.
		\end{cases}
	\end{equation*}
	Note that elements in the domain of the functional $\mathscr{F}$ belong to $\mP_1(\Rd)$, thus $0=g(\rho,\eta)=d_1(\rho,\eta)$ implies $\rho=\eta$. Let us check that $\mathscr{F}$ is an admissible functional. 
	
	Lower semicontinuity can be easily verified following, e.g., \cite{BE22}. Let $A_c:=\{v\in L^m(\Rd): \mathscr{F}[v]\le c\}$ be a sublevel of $\mathscr{F}$, where $c$ is a positive constant. We consider $B_c:=\{w=v^{\frac{m}{2}}:v\in A_c\}$ and prove that $B_c$ is relatively compact in $L^2(\Rd)$, as the map $w\in L^2(\Rd)\mapsto \iota(w)=w^\frac{2}{m}\in L^m(\Rd)$ is continuous and $A_c=\iota(B_c)$.
	
	The Riesz-Fr\'{e}chet-Kolmogorov theorem provides relatively compactness in $L^2(\Rd)$ of $B_c$. In fact, elements of $B_c$ are bounded in $L^2(\Rd)$ and it holds the uniform continuity estimate
	\begin{equation}\label{eq:l2-cont-est}
		\begin{split}
			\int_{\Rd}|w(x+h)-w(x)|^2dx &\!=\! \int_{\Rd}\left|\int_0^1 \frac{d}{d\tau}w(x+\tau h)\,d\tau\right|^2 dx\! =\! \int_{\Rd}\left|\int_0^1 h \cdot \nabla w(x+\tau h)\,d\tau\right|^2 dx\\
			&\le |h|^2\int_{\Rd}\int_0^1 |\nabla w(x+\tau h)|^2\,d\tau \,dx = |h|^2\|\nabla w\|_{L^2(\Rd)}^2,
		\end{split}
	\end{equation}
	which implies $\|w(\cdot+h)-w(\cdot)\|_{L^2(\Rd)}\to0$ as $h\to0^+$.
	
	Before proceeding to the uniform integrability, we record the following improved estimates afforded to us by the fact that $B_c$ is a bounded subset of $H^1(\R^d)$.
\begin{equation}
	\label{eq:sob_embed}
\sup_{w\in B_c}\|w\|_{L^q(\R^d)} \le c, \quad q \in \left\{
\begin{array}{cc}
\{+\infty\} 	&d=1 	\\

[2,+\infty) 	&d=2 	\\

[2, \frac{2d}{d-2}]	&d>2
\end{array}
\right..
\end{equation}
In the case $d=1$, for any $m\ge 1$, we set $\delta = 1$ in the following estimate
\begin{align}
	\label{eq:unifint1}
	\|w\|_{L^2(\Rd\setminus B_R)}^2&=\int_{|x|\ge R}|v(x)|^m \,dx \le \frac{1}{R^{\delta}}\int_{\Rd}|x|^{\delta}|v(x)|^m\,dx 	\\
	&\le \frac{\|v\|_{L^\infty}^{m-1}}{R}\int_{\R^d}|x|v(x)\, dx\le \frac{\|v\|_{L^\infty}^{m-1}}{R}\mathscr{F}[v]\le \frac{\|v\|_{L^\infty}^{m-1}}{R}c. \notag
\end{align}
Hence, uniform integrability is proven in the case $d=1$ and $m\ge 1$. In fact, for any $d\ge 2$ and $m=1$, we can simply take $\delta=1$ again in~\eqref{eq:unifint1} to establish uniform integrability in this case. For general $d\ge 2$ and $m>1$, we further develop~\eqref{eq:unifint1} by H\"older's inequality to obtain, for a particular choice of $\delta\in(0,1)$ which will be made clear,
\begin{equation}
	\label{eq:unifint2}
\|w\|_{L^2(\R^d\setminus B_R)}^2	\le \frac{1}{R^{\delta}}\left(\int_{\R^d}|x|v(x)\,dx\right)^\delta \left(\int_{\Rd}|v(x)|^{\frac{m-\delta}{1-\delta}}\,dx\right)^{1-\delta}.
\end{equation}
The parameter $\delta\in(0,1)$ can be chosen to take advantage of the extra integrability from~\eqref{eq:sob_embed}. For example, we can take
\[
\delta = \frac{2}{d(m-1)+2} \in (0,1),
\]
which is permissible in light of the Sobolev embedding~\eqref{eq:sob_embed} $\int_{\R^d}|v(x)|^\frac{m-\delta}{1-\delta} \, dx \le c$ recalling $v = w^\frac{2}{m}$.
Thus, \eqref{eq:unifint2} yields the uniform integrability of $w$ in $L^2$.
	
	We now check tightness and weak integral equicontinuity, i.e. conditions \eqref{hprossav}. Let us set $U:=\{v_\tau^\varepsilon\}_{0<\varepsilon\le\varepsilon_0, 0 < \tau}$, being $v_\tau^\varepsilon:[0,T]\to L^m(\Rd)$ the sequence defined above by $v_\tau^\varepsilon=V_\varepsilon*\rhote$, which satisfies~\Cref{lem:h1-bound}. For any $0<\varepsilon\le\varepsilon_0$ and $\tau>0$, it holds
	\begin{align*}
		\int_0^T\mathscr{F}[v_\tau^\varepsilon(t)]\,dt&=\int_0^T\left\|\left(v^\varepsilon_\tau\right)^{\frac{m}{2}}\right\|_{H^1(\Rd)}^2\, dt  + \int_0^T\int_{\R^d}|x|v^\varepsilon_\tau(x)\, dx\,dt\\
		&\le C(\rho_0,V_1,T)+ \varepsilon_0T\int_\Rd V_1(z)|z|\,dz<+\infty,
	\end{align*}
	where we used
	\begin{align*}
		\int_\Rd|x|v^\varepsilon_\tau(x)\,dx&=\iint_\Rdd |x|V_\varepsilon(x-y)\,d\rhote(y)\,dx\\
		&\le \iint_\Rdd V_\varepsilon(x-y)|x-y|\,d\rhote(y)\,dx+ \iint_\Rdd V_\varepsilon(x-y)|y|\,d\rhote(y)\,dx\\
		&=\varepsilon\int_\Rd V_1(z)|z|\,dz+\int_\Rd V_1(z)\,dz\int_\Rd|y|\,d\rhote(y)\\
		&\le\varepsilon_0\int_\Rd V_1(z)|z|\,dz+\sqrt{m_2(\rhote)}\int_\Rd V_1(z)\,dz<+\infty.
	\end{align*}
	due to~\Cref{rem:uniform-bound-initial-energy} and~\Cref{prop:en-ineq-mom-bound}.
	Taking the supremum in $U$ we have tightness. For the weak integral equicontinuity, we fix $\varepsilon, \, h>0$ and consider the $\tau \le h$ and $\tau > h$ cases separately. Starting with $\tau \le h$, we use the almost H\"older continuity of $\rhote$ proven in~\eqref{eq:holder-cont} of~\Cref{prop:limit-rho}. More precisely, it holds
	\begin{align*}
		\int_0^{T-h}\!\!d_1(v_\tau^\varepsilon(t+h),v_\tau^\varepsilon(t))\,dt&\le\!\!\int_0^{T-h}\!\!d_W(v_\tau^\varepsilon(t+h),v_\tau^\varepsilon(t))\,dt\le\!\!\int_0^{T-h}\!\!d_W(\rhote(t+h),\rhote(t))\,dt 	\\
		&\le c\int_0^{T-h} (\sqrt{h} + \sqrt{\tau}) \, dt \le 2c(T-h)\sqrt{h}.
	\end{align*}
	where in the intermediate inequalities we used~\eqref{eq:holder-cont} for some constant $c>0$ (independent of $\varepsilon, \, \tau, \, h$) as well as standard properties of Wasserstein distances, c.f. for example \cite[Section 5.1]{S}. The equicontinuity follows by sending $h\downarrow 0$. In the case $\tau > h$, we use~\eqref{eq:total-square-en-estim} instead to estimate
\begin{align*}
	\quad \int_0^{T-h}\!\!d_1(v_\tau^\varepsilon(t+h),v_\tau^\varepsilon(t))\,dt&\le\!\!\int_0^{T-h}\!\!d_W(\rhote(t+h),\rhote(t))\,dt \le  h\sum_{n=0}^{N-1}d_W(\rhotnne,\rhotne) 	\\
&\le h N^\frac{1}{2}\left(
\sum_{n=0}^{N-1}d_W^2(\rhotnne, \rhotne)
\right)^\frac{1}{2} \le cT^\frac{1}{2} h,
\end{align*}
where the constant $c$ is defined when proving \eqref{eq:total-square-en-estim}.

We are left to prove the relative compactness in $L^m([0,T];L^m(\Rd))$ for all $m\ge 1$. We start with the limit $\tau\downarrow 0$ for fixed $\varepsilon>0$. Remember that the estimates we have proven so far are uniform in $\varepsilon$ and $\tau$ so there is no dependence on $\varepsilon$ as $\tau\downarrow0$. We begin with the $m>1$ case. The first part in the proof of~\Cref{lem:h1-bound} showed that $\|v_\tau^\varepsilon\|_{L^m([0,T]\times \R^d)}$ is uniformly bounded. Thus there exists a subsequence $\tau_k\downarrow 0$ such that $v_{\tau_k}^\varepsilon \rightharpoonup v^\varepsilon$ in $L^m([0,T]\times \R^d)$ for some $v^\varepsilon\in L^m([0,T]\times \R^d)$. By~\Cref{prop:en-ineq-mom-bound}, we know that $\rho_\tau^\varepsilon$ narrowly converges to $\tilde{\rho}^\varepsilon$ along a subsequence uniformly in $[0,T]$. By testing against  smooth functions, we must have agreement between these limits $v^\varepsilon = V_\varepsilon * \tilde{\rho}^\varepsilon$. Moreover, along a further subsequence which we just label $\tau \downarrow0$, we can apply~\Cref{prop:aulirs-meas} giving
	\[
		\lim_{\tau\downarrow0} \left|
		\left\{
		t\in(0,T) \, : \, \|v_\tau^\varepsilon(t) - v^\varepsilon(t)\|_{L^m(\R^d)}\ge \sigma
		\right\}
		\right| = 0, \quad \forall \sigma>0.
	\]
Let us denote the set above by $A_\sigma(\tau)$. For arbitrary $\sigma>0$, we have
\begin{align*}
&\quad \|v_\tau^\varepsilon - v^\varepsilon\|_{L^m([0,T]\times \R^d)}^m= \int_0^T\int_{\R^d} |v_\tau^\varepsilon - v^\varepsilon|^m = \left(\int_{A_\sigma(\tau)} + \int_{[0,T]\setminus A_\sigma(\tau)}\right)\int_{\R^d}|v_\tau^\varepsilon - v^\varepsilon|^m	\\
&\le \sup_{s\in[0,T]}2^{(m-1)}\left(\|v_\tau^\varepsilon(s)\|_{L^m}^m + \|v^\varepsilon(s)\|_{L^m}^m \right)\left|A_\sigma(\tau)\right| + \sigma^m T.
\end{align*}
Similar to~\eqref{eq:lmest}, we can insert
\[
\sup_{\varepsilon,\tau>0,\, t \in[0,T]}\|v_\tau^\varepsilon \|_{L^m}^m \le \frac{c_2}{c_1}\|\rho_0\|_{L^m}^m, \quad\text{and} \quad \sup_{\varepsilon>0, \,t\in[0,T]}\|v^\varepsilon\|_{L^m}^m\le \frac{c_2}{c_1}\|\rho_0\|_{L^m}^m
\]
into the previous estimate to obtain
\[
\|v_\tau^\varepsilon- v^\varepsilon\|_{L^m([0,T]\times \R^d)}^m \le 2^m\frac{c_2}{c_1} \|\rho_0\|_{L^m}^m |A_\sigma(\tau)| + \sigma^mT.
\]
Passing to $\tau\downarrow0$ and using $\lim_{\tau\downarrow0} |A_\sigma(\tau)| = 0$, we arrive at
\[
\limsup_{\tau\downarrow 0}\|v_\tau^\varepsilon- v^\varepsilon\|_{L^m([0,T]\times \R^d)} \le \sigma T^\frac{1}{m}.
\]
Since $\sigma>0$ was arbitrary, this implies the strong $L^m$ convergence from $v_\tau^\varepsilon$ to $v^\varepsilon = V_\varepsilon*\tilde{\rho}^\varepsilon$.

In the case $m=1$, we need to argue differently. We apply~\cite[Proposition 1.10]{RS} which asserts that relatively compactness in $L^1((0,T);X)$ is implied by uniform integrability and relatively compactness in measure as a function with values in $X$ ($X\equiv L^1(\Rd)$ in this proof). Compactness in measure has just been proven as an application of~\Cref{prop:aulirs-meas}. Following~\cite[Remark 1.11]{RS}, uniform integrability is a consequence of the strong integral equicontinuity
	\[
	\lim_{h\to0}\sup_{\tau}\int_0^{T-h}\left|\|v^\varepsilon_\tau(t+h)\|_{L^1(\Rd)}-\|v^\varepsilon_\tau(t)\|_{L^1(\Rd)}\right|\,dt=0,
	\]
	where we used that $\|v^\varepsilon_\tau(t)\|_{L^1(\Rd)}=1$ for any $t\in[0,T]$ and $\varepsilon>0$.

\ul{Strong compactness $\varepsilon\downarrow 0$:} We first claim that the estimate in~\Cref{lem:h1-bound} also holds uniformly for $v^\varepsilon$, namely
\begin{equation}
\label{eq:h1_v^eps}
\sup_{\varepsilon>0} \left\|(v^\varepsilon)^\frac{m}{2}\right\|_{L^2(0,T; \, H^1(\R^d))} \le C(\rho_0,\, V_1, \, T).
\end{equation}
This can be seen by the fact that, up to a further subsequence, $(v_\tau^\varepsilon)^\frac{m}{2}$ converges to $(v^\varepsilon)^\frac{m}{2}$ strongly in $L^2([0,T]\times \R^d)$. Indeed, by standard results in $L^p$ integration theory and the fact that $v_\tau^\varepsilon \to v^\varepsilon$ strongly in $L^m$, there exists $w^\varepsilon\in L^m([0,T]\times\R^d)$ such that, along a subsequence, $|v_\tau^\varepsilon| \le w^\varepsilon$ for almost every $(t,x) \in [0,T]\times \R^d$. Moreover, we have $v_\tau^\varepsilon\to v^\varepsilon$ pointwise almost everywhere in $[0,T]\times \R^d$. Using Lebesgue's dominated convergence theorem, we obtain
\[
\int_0^T\int_{\R^d}\left|(v_\tau^\varepsilon)^\frac{m}{2} - (v^\varepsilon)^\frac{m}{2}\right|^2 \to 0,
\]
since the integrand converges to 0 pointwise almost everywhere and it is majorised, uniformly in $\tau$, by 
\[
|(v_\tau^\varepsilon)^\frac{m}{2} - (v^\varepsilon)^\frac{m}{2}|^2 \le 2((w^\varepsilon)^m + (v^\varepsilon)^m)\in L^1([0,T]\times \R^d).
\]
Owing to the (weak $L^2$) lower semicontinuity of the $H^1$ seminorm, the estimate in~\Cref{lem:h1-bound} passes to the limit (along a subsequence) $\tau\downarrow 0$ and~\eqref{eq:h1_v^eps} is established.

At this point, we can repeat all of the previous argument for $U=\{v^\varepsilon\}_{\varepsilon\in(0,\varepsilon_0)}$. We take the same space $X = L^m(\R^d)$ and $g=d_1$. The same functional $\mathscr{F}$ is still admissible. Tightness and weak integral equicontinuity can be analogously proven.

\Cref{prop:aulirs-meas} applies and we have convergence in measure for $v^\varepsilon$ to some curve $v$ described in the statement of this result. By the same arguments as before, this convergence is strong in $L^m([0,T]; L^m(\R^d))$.

\end{proof}

\section{Convergence of solutions}\label{sec:nonlocal-to-local-limit}
This section addresses the proof of~\Cref{thm:exist_nonlinear_diffusion}. We cover the case $m\ge 2$ in~\Cref{sec:mge2} while the case $1<m<2$ is treated in~\Cref{sec:mle2}. To simplify the presentation, we focus on functionals $\mf = \mh_m$ but we also discuss (see~\Cref{rem:consistency-general-F}) the extension to general energies satisfying~\ref{ass:AGS-F}, \ref{ass:diff-F}, \ref{ass:diff-reg}, and~\ref{ass:F-m} for convergence from~\eqref{eq:nlie-class} to~\eqref{eq:nonlinear-diffusion}.
\subsection{The case $m\ge 2$}
\label{sec:mge2}
Building on the previous discussions from~\Cref{sec:nlie,sec:compact}, we denote $\rhoe$ the weak measure solutions to~\eqref{eq:nlie} constructed from the JKO scheme in~\Cref{prop:en-ineq-mom-bound}. Moreover, we focus on the subsequence such that $v^\varepsilon = V_\varepsilon * \rhoe$ converges to $v\in C([0,T];\, \mptrd)\cap L^m([0,T]\times \Rd)$ in $L^m([0,T]\times \Rd)$ from~\Cref{prop:strong-convergence-v}. Starting from the definition of weak measure solution to \eqref{eq:nlie} we can reformulate the right-hand side as follows:
\begin{equation}\label{eq:weak-form-conv}
\begin{split}
\int_\Rd\varphi(x)d\rho_t^\varepsilon(x)\!-\!\int_\Rd\varphi(x)d\rho_0(x)&=-\frac{m}{m-1}\int_0^t\int_\Rd\nabla\varphi(x)\cdot\nabla V_\varepsilon*(V_\varepsilon*\rho_r^\varepsilon)^{m-1}(x)d\rho_r^\varepsilon(x)dr\\
&=-\frac{m}{m-1}\int_0^t\int_\Rd (V_\varepsilon*\rho_r^\varepsilon\nabla\varphi)(x)\nabla(V_\varepsilon*\rho_r^\varepsilon)^{m-1}(x)\,dx\,dr\\
&=-2\int_0^t\int_\Rd(V_\varepsilon*\rho_r^\varepsilon\nabla\varphi)(V_\varepsilon*\rho_r^\varepsilon)^{\frac{m}{2}-1}\nabla(V_\varepsilon*\rho_r^\varepsilon)^{\frac{m}{2}}\,dx\,dr\\
&=-2\int_0^t\int_\Rd\nabla\varphi(x)(V_\varepsilon*\rho_r^\varepsilon)^{\frac{m}{2}}(x)\nabla(V_\varepsilon*\rho_r^\varepsilon)^{\frac{m}{2}}(x)\,dx\,dr\\
&\quad-2\int_0^t\int_\Rd z_r^\varepsilon(x)(V_\varepsilon*\rho_r^\varepsilon)^{\frac{m}{2}-1}\nabla(V_\varepsilon*\rho_r^\varepsilon)^{\frac{m}{2}}(x)\,dx\,dr,
\end{split}
\end{equation}
being for any $r\in[0,T]$ and $x\in\Rd$, the error term
\begin{equation}\label{eq:excess-term}
z_r^\varepsilon(x):=(V_\varepsilon*\rho_r^\varepsilon\nabla\varphi)(x)-\nabla\varphi(x)(V_\varepsilon*\rho_r^\varepsilon)(x).
\end{equation}
The product $(V_\varepsilon* \rho_r^\varepsilon)^\frac{m}{2}\nabla (V_\varepsilon*\rho_r^\varepsilon)^\frac{m}{2}$ in the last line of~\eqref{eq:weak-form-conv} is a weak-strong convergence pair in $L_{t,x}^2$. Indeed, recall the uniform $L_t^2H_x^1$ bound on $(V_\varepsilon * \rho^\varepsilon)^\frac{m}{2}$ and~\Cref{prop:aulirs-meas} from~\Cref{sec:compact}. Hence, the first integral in the last line of~\eqref{eq:weak-form-conv} passes well in the limit (along a subsequence) $\varepsilon\to 0$. This is precised later in the full proof of~\Cref{thm:exist_nonlinear_diffusion} so we dedicate much of this section to estimates proving that the error vanishes as $\varepsilon\to 0$.
\begin{rem}
If $V_1$ is compactly supported, the argument that the last term in~\eqref{eq:weak-form-conv} vanishes as $\varepsilon\downarrow 0$ can be simplified based on the arguments in~\Cref{sec:mle2}. In the rest of this subsection however, we present a general argument allowing for $V_1$ with unbounded support.
\end{rem}
Notice that the last term in the last equality of~\eqref{eq:weak-form-conv} can be estimated as
\[
\|z^\varepsilon (v^\varepsilon)^{\frac{m}{2}-1}\nabla (v^\varepsilon)^{\frac{m}{2}}\|_{L^1([0,t]\times\Rd)}\le\|z^\varepsilon\|_{L^m([0,t]\times\Rd)}\|(v^\varepsilon)^{\frac{m}{2}-1}\|_{L^q([0,t]\times\Rd)}\|\nabla (v^\varepsilon)^{\frac{m}{2}}\|_{L^2([0,t]\times\Rd)},
\]
for $q=\frac{2m}{m-2}$, so that $\frac{1}{m}+\frac{1}{q}+\frac{1}{2}=1$ and
\[
\|(v^\varepsilon)^{\frac{m}{2}-1}\|_{L^q([0,t]\times\Rd)}=\|v^\varepsilon\|_{L^m([0,t]\times\Rd)}^{m\left(\frac{m-2}{2}\right)}\le c(T,V_1,\rho_0,m).
\]
Notice that the exponent $q$ is only valid for $m\ge 2$ based on the computations above. In order to obtain a solution of \eqref{eq:pme} in the $\varepsilon\to0^+$ limit, we need to prove that $z^\varepsilon\to0$ in $L^m([0,t]\times\Rd)$, for any $t\in[0,T]$. In turn, this will imply the error term in \eqref{eq:weak-form-conv} vanishes as $\varepsilon\to0$, as a consequence of the $L^2$ version of Lebesgue dominated convergence theorem and weak-$L^2$ convergence. More precisely, we note that the product $z^\varepsilon(v^\varepsilon)^{\frac{m}{2}-1}\in L^2([0,t]\times\Rd)$ and it converges to $0$ strongly in $L^2$.

\begin{lem}\label{lem:convergence-z-0} 
There exists a vanishing subsequence $\varepsilon_k$ such that the error term $z^{\varepsilon_k}$ converges to zero in $L^m([0,T]\times\Rd)$ as $\varepsilon_k\to0$. 
\end{lem}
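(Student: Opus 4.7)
The plan is to establish the claim in three steps, exploiting the uniform $L^m$ control on $v^\varepsilon$ from~\Cref{prop:strong-convergence-v}. First, I would rewrite the error term as
\begin{equation*}
z_r^\varepsilon(x) = \int_\Rd V_\varepsilon(x-y)[\nabla\varphi(y) - \nabla\varphi(x)]\,d\rho_r^\varepsilon(y),
\end{equation*}
which follows immediately from~\eqref{eq:excess-term} and the identity $v^\varepsilon(r,x) = \int V_\varepsilon(x-y)\,d\rho_r^\varepsilon(y)$. The triangle inequality gives the pointwise domination $|z_r^\varepsilon(x)| \le 2\|\nabla\varphi\|_{L^\infty} v^\varepsilon(r,x)$, so $\{z^\varepsilon\}_{\varepsilon>0}$ is uniformly bounded in $L^m([0,T]\times\Rd)$.

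Next, I would show $z^\varepsilon \to 0$ in $L^1([0,T]\times\Rd)$. Integrating the bound $|z_r^\varepsilon(x)| \le \int V_\varepsilon(x-y)|\nabla\varphi(y)-\nabla\varphi(x)|\,d\rho_r^\varepsilon(y)$ in $x$, then applying Fubini and the change of variable $x = y + \varepsilon u$, yields
\begin{equation*}
\int_\Rd |z_r^\varepsilon(x)|\,dx \le \int_\Rd \Phi^\varepsilon(y)\,d\rho_r^\varepsilon(y), \qquad \Phi^\varepsilon(y) := \int_\Rd V_1(u)|\nabla\varphi(y+\varepsilon u) - \nabla\varphi(y)|\,du.
\end{equation*}
Since $\nabla\varphi \in C_c(\Rd)$ is uniformly continuous and $V_1 \in L^1(\Rd)$, splitting the $u$-integral into $\{|u|\le M\}$ (where the integrand is uniformly small once $\varepsilon M$ is small by uniform continuity) and $\{|u|>M\}$ (bounded by $2\|\nabla\varphi\|_{L^\infty}$ times the tail $\int_{|u|>M}V_1(u)\,du$), I obtain $\Phi^\varepsilon \to 0$ uniformly in $y \in \Rd$. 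As $\rho_r^\varepsilon$ is a probability measure, this gives $\sup_{r\in[0,T]}\|z^\varepsilon(r,\cdot)\|_{L^1(\Rd)} \to 0$, hence $z^\varepsilon \to 0$ in $L^1([0,T]\times\Rd)$.

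To upgrade from $L^1$ to $L^m$, I would extract a subsequence $\varepsilon_k \downarrow 0$ with $z^{\varepsilon_k} \to 0$ a.e.\ on $[0,T]\times\Rd$, then apply Vitali's convergence theorem to $|z^{\varepsilon_k}|^m$. The inequality $||a|^m - |b|^m| \le m(|a|^{m-1}+|b|^{m-1})|a-b|$ combined with H\"older shows $(v^{\varepsilon_k})^m \to v^m$ in $L^1([0,T]\times\Rd)$, so $\{(v^{\varepsilon_k})^m\}$ is uniformly integrable. The pointwise domination $|z^{\varepsilon_k}|^m \le (2\|\nabla\varphi\|_{L^\infty})^m (v^{\varepsilon_k})^m$ transfers uniform integrability to $|z^{\varepsilon_k}|^m$, and Vitali's theorem then yields $z^{\varepsilon_k} \to 0$ in $L^m([0,T]\times\Rd)$.

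I expect the main obstacle to be bridging the gap between $L^1$ convergence and the uniform $L^m$ bound: naive interpolation only recovers $L^p$ convergence for $p<m$, so Vitali's convergence theorem with uniform integrability inherited from the strong $L^m$ convergence $v^\varepsilon \to v$ is the correct tool in the critical $L^m$ case. A secondary subtlety is that $V_1$ need not have compact support, but uniform continuity of $\nabla\varphi$ combined with $V_1 \in L^1(\Rd)$ still delivers uniform vanishing of $\Phi^\varepsilon$ through the tail estimate.
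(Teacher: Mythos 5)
Your proof is correct and follows essentially the same route as the paper: the same rewriting of $z^\varepsilon$ via~\eqref{eq:excess-term}, an $L^1$-smallness estimate giving a.e.\ convergence along a subsequence, the same majorant $2\|\nabla\varphi\|_{L^\infty}v^\varepsilon$, and the conclusion via a generalised dominated convergence (Vitali) argument exploiting the strong $L^m$ convergence of $v^\varepsilon$ from~\Cref{prop:strong-convergence-v}. The only minor variation is that you use uniform continuity of $\nabla\varphi$ (so $\varphi\in C^1_c$ suffices, without a rate), whereas the paper estimates $|\nabla\varphi(y)-\nabla\varphi(x)|\le\|D^2\varphi\|_{L^\infty}|x-y|$ for $\varphi\in C^2_c(\Rd)$ to obtain $\|z^\varepsilon\|_{L^\infty([0,T];L^1(\Rd))}\le C\varepsilon$ and then mollifies the test function later in the proof of~\Cref{thm:exist_nonlinear_diffusion}.
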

\begin{proof}
First we notice that for any $t\in[0,T]$ and $\varphi\in C^2_c(\Rd)$ it holds
\begin{align*}
    \int_\Rd |z^\varepsilon_t(x)|\,dx &\le \int_{\Rd}\int_{\Rd} V_\varepsilon(x-y) |\nabla \varphi(y) - \nabla \varphi(x)| d\rhoe_t(y)\,dx\\
    &\le\|D^2\varphi\|_\infty\int_{\R^d}\int_{\Rd}V_\varepsilon(x-y)|y-x|\,d\rhoe_t(y)\,dx\\
    &=\varepsilon\|D^2\varphi\|_\infty\int_{\Rd}|z|V_1(z)\,dz,
\end{align*}
by means of the change of variable $z=\frac{x-y}{\varepsilon}$. Therefore, there exists a constant $C(V_1,\varphi)$ such that $\|z^\varepsilon\|_{L^\infty([0,T];L^1(\Rd))}\le\varepsilon C(V_1,\varphi)$, whence, up to passing to a subsequence, $z_t^\varepsilon(x)\to0$ for a.e. $(t,x)\in [0,T]\times\Rd$. We now find a majorant to apply the $L^p$ version of the generalised Lebesgue dominated convergence theorem. 

For almost every $x\in\Rd$ and $t\in[0,T]$, for $i=1,\ldots,d$, the non-negativity of $V_1$ and $\rhoe_t$ gives
$$   \left\vert \int_{\Rd} V_\varepsilon(x-y) \partial_{x_i} \varphi(y) d\rhoe_t(y) \right\vert 
\leq \int_{\Rd} V_\varepsilon(x-y) \vert \partial_{x_i} \varphi(y)  \vert d\rhoe_t(y) \leq 
\Vert \partial_{x_i} \varphi \Vert_{\infty} ~v^\varepsilon_t(x),$$
whence
$$
|z_t^\varepsilon(x)|\le2\|\nabla\varphi\|_\infty|v_t^\varepsilon(x)|.
$$
Since $v^\varepsilon\in L^m([0,t]\times\Rd)$ and it converges strongly in $L^m$, c.f.~Proposition \ref{prop:strong-convergence-v}, we are able to conclude the result, as aforementioned.
\end{proof}

\begin{lem}\label{lem:limit-dist}
For any $t\in[0,T]$ and any $\varphi\in C_c^1(\Rd)$ it holds
$$
\lim_{\varepsilon\to0^+}\int_\Rd \varphi(x)v_t^\varepsilon(x)\,dx=\int_{\Rd}\varphi(x)\,d\tilde{\rho}(t).
$$
\end{lem}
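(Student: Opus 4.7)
The plan is to transfer the mollification from the measure onto the test function using symmetry of $V_\varepsilon$, and then combine the uniform convergence $V_\varepsilon * \varphi \to \varphi$ with the narrow convergence $\rho^\varepsilon_t \rightharpoonup \tilde\rho_t$ from Proposition~\ref{prop:limit-rho}. Concretely, since $V_\varepsilon$ is even by~\ref{ass:v1}, Fubini's theorem yields
\[
\int_\Rd \varphi(x)\, v_t^\varepsilon(x)\, dx = \int_\Rd\!\!\int_\Rd \varphi(x) V_\varepsilon(x-y)\, d\rho_t^\varepsilon(y)\, dx = \int_\Rd (V_\varepsilon * \varphi)(y)\, d\rho_t^\varepsilon(y),
\]
which is well-defined because $\varphi$ is bounded with compact support and $\rho_t^\varepsilon$ is a probability measure.

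Next I would split the difference with the claimed limit as
\[
\int_\Rd (V_\varepsilon * \varphi)\, d\rho_t^\varepsilon - \int_\Rd \varphi\, d\tilde\rho_t = \int_\Rd (V_\varepsilon * \varphi - \varphi)\, d\rho_t^\varepsilon + \int_\Rd \varphi\, d(\rho_t^\varepsilon - \tilde\rho_t).
\]
For the first term, since $\varphi \in C_c^1(\Rd)$, the mean-value theorem gives the uniform bound
\[
\|V_\varepsilon * \varphi - \varphi\|_{L^\infty} \le \|\nabla \varphi\|_{L^\infty} \int_\Rd V_\varepsilon(y)|y|\, dy = \varepsilon\, \|\nabla\varphi\|_{L^\infty}\int_\Rd V_1(z)|z|\, dz,
\]
where the last integral is finite by Cauchy--Schwarz from $\|V_1\|_{L^1}=1$ and the finite second moment of $V_1$ in~\ref{ass:v1}. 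Using that $\rho_t^\varepsilon$ has unit mass, this bounds the first term by $O(\varepsilon)$. For the second term, Proposition~\ref{prop:limit-rho} provides narrow convergence $\rho^{\varepsilon_k}_t \rightharpoonup \tilde\rho_t$ along the subsequence already extracted for Proposition~\ref{prop:strong-convergence-v}, and since $\varphi \in C_c^1(\Rd) \subset C_b(\Rd)$, the second term vanishes in the limit.

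Combining these two estimates along the chosen subsequence gives the claim. The result is essentially elementary: there is no real obstacle beyond observing that the mollification can be moved onto the test function via the symmetry of $V_\varepsilon$, so that narrow convergence of $\rho^{\varepsilon_k}_t$ becomes directly usable. An important bookkeeping point is that $\tilde\rho$ here must refer to the narrow limit of Proposition~\ref{prop:limit-rho}, and the lemma is exactly the identification statement that the $L^m$ limit $v$ from Proposition~\ref{prop:strong-convergence-v} coincides (as a measure, i.e.\ through duality against $C_c^1$) with the narrow limit $\tilde\rho$; in particular, $\tilde\rho_t$ is absolutely continuous with density $v(t,\cdot)$.
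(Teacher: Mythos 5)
Your proof is correct and follows essentially the same route as the paper: transferring the mollifier onto the test function by symmetry of $V_\varepsilon$, bounding $\|V_\varepsilon*\varphi-\varphi\|_{L^\infty}$ by $\varepsilon\|\nabla\varphi\|_{L^\infty}\int_{\Rd}|z|V_1(z)\,dz$ via the mean-value inequality, and then invoking the narrow convergence $\rho_t^{\varepsilon}\rightharpoonup\tilde\rho(t)$ from Proposition~\ref{prop:limit-rho}. Your explicit remarks on the finiteness of the first moment of $V_1$ and on working along the extracted subsequence are accurate refinements of what the paper leaves implicit.
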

\begin{proof}
For any $t\in[0,T]$ and any $\varphi\in C_c^1(\Rd)$, by using the definition of $v_t^\varepsilon$ we obtain:
\begin{align*}
\left|\int_\Rd\varphi(x)v_t^\varepsilon(x)\,dx-\int_\Rd\varphi(x)\,d\rhoe_t(x)\right|&=\left|\int_\Rd\varphi(x)(V_\varepsilon*\rhoe_t)(x)\,dx-\int_\Rd\varphi(x)\,d\rhoe_t(x)\right|\\
&=\left|\int_\Rd(\varphi*V_\varepsilon)(x)\,d\rhoe_t(x)-\int_\Rd\varphi(x)\,d\rhoe_t(x)\right|\\
&=\left|\int_\Rd[(\varphi*V_\varepsilon)(x)-\varphi(x)]\,d\rhoe_t(x)\right|\\
&\le\int_\Rd\int_\Rd|\varphi(x-y)-\varphi(x)|V_\varepsilon(y)\,dy\,d\rhoe_t(x)\\
&\le\|\nabla\varphi\|_\infty\int_\Rd |y|V_\varepsilon(y)\,dy\\
&=\varepsilon\|\nabla\varphi\|_\infty\int_\Rd |x|V_1(x)\,dx,
\end{align*}
which converges to $0$ as $\varepsilon\to0^+$ since $\int_\Rd|x|V_1(x)\,dx<+\infty$. In the second last estimate, we used the mean-value inequality $|\varphi(x-y) - \varphi(x)| \le \|\nabla \varphi\|_{\infty} |y|$.
\end{proof}
We now have all the information to prove~\Cref{thm:exist_nonlinear_diffusion} in the case $\mf = \mh_m$ for $m\ge 2$.

\begin{proof}[Proof of~\Cref{thm:exist_nonlinear_diffusion} for $\mf = \mh_m$ and $m\ge 2$]
Since $\rhoe$ is a weak solution to \eqref{eq:nlie}, for any $\varphi\in C^1_c(\Rd)$ and $t\in[0,T]$ it satisfies
\begin{align*}
    \int_\Rd\varphi(x)\,d\rho_t^\varepsilon(x)-\int_\Rd\varphi(x)\,d\rho_0(x)&=-2\int_0^t\int_\Rd\nabla\varphi(x)(V_\varepsilon*\rho_r^\varepsilon)^{\frac{m}{2}}(x)\nabla(V_\varepsilon*\rho_r^\varepsilon)^{\frac{m}{2}}(x)\,dx\,dr\\
&\quad-2\int_0^t\int_\Rd z_r^\varepsilon(x)(V_\varepsilon*\rho_r^\varepsilon)^{\frac{m}{2}-1}\nabla(V_\varepsilon*\rho_r^\varepsilon)^{\frac{m}{2}}(x)\,dx\,dr,
\end{align*}
as explained in~\eqref{eq:weak-form-conv}. \Cref{prop:limit-rho}, \Cref{lem:h1-bound}, \Cref{lem:limit-dist}, and~\Cref{prop:strong-convergence-v} infer existence of a subsequence of $\rhoe(t)$ narrowly converging to $\tilde{\rho}\in L^m([0,T];L^m(\Rd))$, and, in particular, $\{v^\varepsilon\}_\varepsilon$ admits a subsequence such that
\begin{align*}
    &v^{\varepsilon_k}\to\tilde{\rho} \qquad\quad \mbox{ in } L^m([0,T];L^m(\Rd));\\
    \nabla& (v^{\varepsilon_k})^\frac{m}{2}\rightharpoonup w \quad \, \mbox{ in } L^2([0,T];L^2(\Rd)).
\end{align*}
By a standard argument one can show that $(v^{\varepsilon_k})^\frac{m}{2}\to(\tilde{\rho})^\frac{m}{2}$ in $L^2([0,T];L^2(\Rd))$, whence $w\equiv\nabla(\tilde{\rho})^{\frac{m}{2}}$.
Before letting $\varepsilon\to0^+$ and obtaining the result we need to further regularise the test function, $\varphi$, since in~\Cref{lem:convergence-z-0} we make use of test functions in $C^2_c(\Rd)$. In this regard, we consider a standard mollifier $\eta\in C_c^\infty(\Rd)$ and the corresponding sequence $\varphi^\sigma:=\eta^\sigma*\varphi\in C_c^\infty(\Rd)$, being $\eta^\sigma(x)=\sigma^{-d}\eta(x/\sigma^d)$ for any $x\in\Rd$ and $\sigma>0$. As a consequence of the observations above and~\Cref{lem:convergence-z-0}, by letting $\varepsilon\to0^+$ we obtain, for any $\sigma>0$ and $t\in[0,T]$,
\begin{align*}
     \int_\Rd\varphi^\sigma(x)\tilde\rho(t,x)\,dx&= \int_\Rd\varphi^\sigma(x)\rho_0(x)\,dx-2\int_0^t\int_\Rd[\tilde\rho(s,x)]^{\frac{m}{2}} \nabla \varphi^\sigma(x)\cdot \nabla [\tilde\rho(s,x)]^{\frac{m}{2}}\,dx\,ds\\
     &=\int_\Rd\varphi^\sigma(x)\rho_0(x)\,dx-\frac{m}{m-1}\int_0^t\int_\Rd\tilde\rho(s,x) \nabla \varphi^\sigma(x)\cdot \nabla [\tilde\rho(s,x)]^{m-1}\,dx\,ds,
    \end{align*}
where in the last equality we are using $m\ge2$, hence the chain rule holds true, cf.~Remark~\ref{rem:chain_rule_weak_form}. More precisely, we re-write $\rho^{m-1}=G\circ u$, for $u=\rho^{\frac{m}{2}}$ and $G(x)=x^{\frac{2(m-1)}{m}}$ since $\frac{2(m-1)}{m}\geq 1$. As pointed out in Remark~\ref{rem:chain_rule_weak_form}, the usual definition of weak solution holds by identifying $\nabla\rho^m=2\rho ^{\frac{m}{2}}\nabla \rho^{\frac{m}{2}}$ (in the weak sense) --- write $\rho^m=G\circ u$, for $u=\rho^{\frac{m}{2}}$ and $G(x)=x^2$. Since $\varphi^\sigma$ converges uniformly to $\varphi$ on compact sets, we can let $\sigma\to0$ and obtain that $\tilde\rho$ is a weak solution to~\eqref{eq:pme} in the sense of~\Cref{def:sol-pme2}. Uniqueness of weak solutions of \eqref{eq:pme} is a known result, c.f. e.g.~\cite{DalKen84,Vaz07}. Hence, we obtain convergence of the whole sequence $\rhoe$ narrowly converges to $\tilde{\rho}$, and $\tilde{\rho}^{-\frac{1}{2}}|\nabla \tilde{\rho}^m|\in L^1([0,T];L^2(\Rd))$, by comparison with the theory in \cite{AGS}.
\end{proof}

\begin{rem}[\Cref{thm:exist_nonlinear_diffusion} for general functionals]
\label{rem:consistency-general-F}
For a general integrand $F$ satisfying~\ref{ass:AGS-F}, \ref{ass:diff-F}, and~\ref{ass:diff-reg}, the RHS of~\eqref{eq:weak-form-conv} becomes
\begin{align*}
-\int_0^t\int_\Rd\nabla\varphi(x)\nabla F'(V_\varepsilon*\rho_r^\varepsilon)(x)(V_\varepsilon*\rho_r^\varepsilon)(x)\,dx\,dr
-\int_0^t\int_\Rd z_r^\varepsilon(x)\nabla F'(V_\varepsilon*\rho_r^\varepsilon)\,dx\,dr,
\end{align*}
being $z^\varepsilon$ as in \eqref{eq:excess-term}. Supposing $F$ also satisfies~\ref{ass:F-m} for some $m\ge 2$, we have the estimate $|F''(x)|\le c x^{m-2}$, hence $F\in C^{2}([0,\infty))$ (origin included). Notice that the power laws for $m\ge 2$ satisfy all of~\ref{ass:AGS-F}, \ref{ass:diff-F}, \ref{ass:diff-reg}, and~\ref{ass:F-m}. The error term can be estimated as
\begin{align*}
    \left|\int_0^t\int_\Rd z_r^\varepsilon(x)\nabla F'(V_\varepsilon*\rho_r^\varepsilon)\,dx\,dr\right|&\lesssim\int_0^t\int_\Rd|z_r^\varepsilon(x)||v_r^\varepsilon(x)|^{\frac{m}{2}-1}|\nabla (v_r^\varepsilon(x))^{\frac{m}{2}}|\,dx\,dr\\
    &\lesssim\|z^\varepsilon\|_{L^m([0,t]\times\Rd)}\|(v^\varepsilon)^{\frac{m}{2}-1}\|_{L^q([0,t]\times\Rd)}\|\nabla (v^\varepsilon)^{\frac{m}{2}}\|_{L^2([0,t]\times\Rd)},
\end{align*}
for $q=\frac{2m}{m-2}$, so that $\frac{1}{m}+\frac{1}{q}+\frac{1}{2}=1$; thus it vanishes as $\varepsilon\to0$ similar to~\Cref{lem:convergence-z-0}. As for the first term, note that it can be rewritten as
\begin{align*}
 \int_0^t\int_\Rd\nabla\varphi(x)\nabla F'(v_r^\varepsilon)(x)v_r^\varepsilon(x)\,dx\,dr=\frac{2}{m}\int_0^t\int_\Rd F''(v_r^\varepsilon(x))(v_r^\varepsilon(x))^{2-\frac{m}{2}}\nabla (v_r^\varepsilon)^\frac{m}{2}\nabla\varphi(x)\,dx\,dr,
\end{align*}
where $F''(x)x^{2-\frac{m}{2}}$ is extended by zero when $x=0$ owing to~\ref{ass:F-m}, and we applied the chain rule twice on the set $\{v_r^\varepsilon>0\}$
\[
v_r^\varepsilon\nabla v_r^\varepsilon=\frac{1}{2}\nabla[ (v_r^\varepsilon)^{\frac{m}{2}}]^{\frac{4}{m}}=\frac{2}{m}(v_r^\varepsilon(x))^{2-\frac{m}{2}}\nabla (v_r^\varepsilon)^\frac{m}{2}.
\]
When multiplied with $F''(v_r^\varepsilon)$, the integrand on the right-hand side makes sense in $L^1$ owing to~\ref{ass:F-m} since $F''(x) \le c x^{m-2}$. Then, we are left to show $g(v^\varepsilon):=F''(v^\varepsilon)(v^\varepsilon)^{2-\frac{m}{2}}$ strongly converges in $L^2([0,T]\times\Rd)$. This is indeed achieved by bounding $|F''(v^\varepsilon)(v^\varepsilon)^{2-\frac{m}{2}}|\le c (v^\varepsilon)^\frac{m}{2}$ and applying the generalised version of the Lebesgue dominated convergence theorem. Therefore, in the $\varepsilon\to 0^+$ limit we obtain (up to pass to a subsequence)
\[
\int_\Rd\varphi(x)\tilde\rho(t,x)\,dx= \int_\Rd\varphi(x)\rho_0(x)\,dx-\frac{2}{m}\int_0^t\int_\Rd F''(\tilde\rho(s,x))(\tilde\rho(s,x))^{2-\frac{m}{2}}\nabla (\tilde\rho(s,x))^\frac{m}{2}\nabla\varphi(x)\,dx\,dr.
\]
Defining $G$ such that $G'(x^{\frac{m}{2}})=F''(x)x^{2-\frac{m}{2}}$, or $G'(x)=F''(x^\frac{2}{m})x^{\frac{2}{m}(2-\frac{m}{2})}$, and $G(0)=0$, we can apply the chain rule to $P(x)=G(x^{\frac{m}{2}})$ to obtain that $\nabla P(\rho)\in L^1([0,T]\times\Rd)$, thus
\[
\int_\Rd\varphi(x)\tilde\rho(t,x)\,dx= \int_\Rd\varphi(x)\rho_0(x)\,dx-\int_0^t\int_\Rd \nabla P(\tilde\rho(s,x))\nabla\varphi(x)\,dx\,dr.
\]
Note that the chain rule and the construction of the pressure $P$ holds for all $m>1$. Uniqueness of distributional solutions of~\eqref{eq:nonlinear-diffusion} is proven in \cite{Brezis_Crandall_79} for bounded solutions, which is actually the case for $L^1$ solutions --- see~\cite{Veron_79,JAC_DF_TOSCANI_ARMA_06} for further details on the so-called $L^1$-$L^\infty$ regularising effect. In particular, from \cite[Theorem 11.2.5]{AGS} we infer that our solution is a $2$-Wasserstein gradient flow satisfying 
\[
\int_0^T\int_\Rd\frac{|\nabla P(\rho)|^2}{\rho}\,dx\,dt<\infty.
\]
Furthermore, uniqueness of solutions implies convergence of the whole sequence $\rho^\varepsilon$, as for \eqref{eq:pme}.
\end{rem}
\begin{rem}
Our result can be also interpreted in the context of \textit{generalised gradient flows} or \textit{gradient structures}, following the dynamical interpretation of the Wasserstein distance, c.f.~\cite{BB2000,DNS09}. More precisely, we know 
\begin{align*}
d_W^2(\rho_0,\rho_1)=\inf\left\{\int_0^1 \mathcal{A}(\rho_t,j_t)\,dt : (\rho,j) \mbox{ solves } \begin{cases}\partial_t\rho+\nabla\cdot j=0,\\
\rho(0)=\rho_0, \quad \rho(1)=\rho_1
\end{cases}\right\},
\end{align*}
where, for any $\lambda\in\mathcal{M}(\Rd;\Rd)$ such that $\rho, j\ll |\lambda|$,
\begin{align*}
\mathcal{A}(\rho,j)=\int_\Rd\alpha\left(\frac{dj}{d|\lambda|},\frac{d\rho}{d|\lambda|}\right)d|\lambda|, \quad \mbox{and} \quad
\alpha(j,r):=\begin{cases}
\frac{(j)^2}{r} \qquad &\text{if}\ r>0,\\
0 \qquad &\text{if}\ j\leq 0\ \text{and}\ r=0,\\
\infty \qquad &\text{if}\ j> 0\ \text{and}\ r=0.
\end{cases}
\end{align*}
Upon using a careful regularisation and cut-off argument one can prove the following chain rule for any absolutely continuous curve with respect to the Wasserstein distance
\[
\mf^\varepsilon[\rho(t)]-\mf^\varepsilon[\rho_0]=-\int_0^t\int_\Rd\nabla\frac{\delta\mf^\varepsilon}{\delta\rho}(x)\, \cdot d j_t(x)\,dt,
\]
hence re-intepret weak (measure) solutions of \eqref{eq:nlie-class} as the zero level set of the De Giorgi functional
\[
\mf^\varepsilon[\rho(t)]-\mf^\varepsilon[\rho_0]+\frac{1}{2}\int_0^t\int_\Rd\left|\nabla\frac{\delta\mf^\varepsilon}{\delta\rho}(x)\right|^2\,d \rho_t(x)\,dt+\frac{1}{2}\int_0^t\mathcal{A}\left(\rho,\,-\rho\nabla\frac{\delta\mf^\varepsilon}{\delta\rho}\right)dt=0.
\]
\end{rem}

\subsection{The case $1<m<2$}
\label{sec:mle2}

The key idea here is to estimate the error $z^\varepsilon$ from~\eqref{eq:excess-term} differently by exploiting the compact support of $V_1$. We take $R>0$ such that $\supp V_1 \subset B_R$. In the case $m\ge 2$, negative powers of $v^\varepsilon = V_\varepsilon*\rho^\varepsilon$ never appeared in~\eqref{eq:weak-form-conv} but these computations can be recycled by cautiously avoiding the 0 level set of $v^\varepsilon$. We define
\[
A_r^\varepsilon := \{x\in\Rd \, | \, v_r^\varepsilon(x) = V_\varepsilon * \rho_r^\varepsilon(x) >0\}
\]
and alter the computations in~\eqref{eq:weak-form-conv} carefully
\begin{align}
    \label{eq:weak-form-conv-concise_1}
    \begin{split}
\int_{\R^d}\!\!\varphi d\rho_t^\varepsilon\!-\!\!\int_{\Rd}\!\!\varphi d\rho_0&=-\frac{m}{m-1}\int_0^t\int_\Rd\nabla\varphi(x)\cdot\nabla V_\varepsilon*(V_\varepsilon*\rho_r^\varepsilon)^{m-1}(x)d\rho_r^\varepsilon(x)dr\\
&=\frac{m}{m-1}\int_0^t\int_{\Rd}\!\!\!(V_\varepsilon*\rho_r^\varepsilon)^{m-1}(x)[\nabla V_\varepsilon*(\rho_r^\varepsilon \nabla \varphi)](x)dxdr\\
&= \frac{m}{m-1}\int_0^t\int_{\Rd}\!\!\!(V_\varepsilon*\rho_r^\varepsilon)^{m-1}(x)\nabla [V_\varepsilon*(\rho_r^\varepsilon \nabla \varphi)](x)dxdr.
    \end{split}
\end{align}
In the second line, we swapped the convolution against $\nabla V_\varepsilon$ and picked up a minus sign because it is an odd function (remember from~\ref{ass:v1} that $V_1$ is even). At this point, we would like to perform integration by parts and apply the gradient onto $(V_\varepsilon*\rho_r^\varepsilon)^{m-1}$. In contrast to~\eqref{eq:weak-form-conv}, when $1<m<2$ we need to avoid the zero set of $V_\varepsilon*\rho_r^\varepsilon$; as smooth as this convolution may be, the function $(V_\varepsilon*\rho_r^\varepsilon)^{m-1}$ is not differentiable on $\Rd \setminus A_r^\varepsilon$. However, due to~\Cref{lem:ibp} (see below), we can justify the integration by parts and develop~\eqref{eq:weak-form-conv-concise_1} to get
\begin{align}
\label{eq:weak-form-conv-concise_2}
\begin{split}
    \int_{\Rd}\!\!\varphi d\rho_t^\varepsilon\!-\!\!\int_{\Rd}\!\!\varphi d\rho_0&=\frac{m}{m-1}\int_0^t\int_{\Rd}\!\!\!(V_\varepsilon*\rho_r^\varepsilon)^{m-1}(x)\nabla [V_\varepsilon*(\rho_r^\varepsilon \nabla \varphi)](x)dxdr \\
    &= -\frac{m}{m-1}\int_0^t \int_{A_r^\varepsilon}\nabla (V_\varepsilon*\rho_r^\varepsilon)^{m-1}(x) V_\varepsilon*(\rho_r^\varepsilon\nabla \varphi)(x)dxdr \quad (\text{\Cref{lem:ibp}})   \\
    &= -2\int_0^t\int_{A_r^\varepsilon}(V_\varepsilon*\rho_r^\varepsilon\nabla\varphi)(V_\varepsilon*\rho_r^\varepsilon)^{\frac{m}{2}-1}\nabla(V_\varepsilon*\rho_r^\varepsilon)^{\frac{m}{2}}\,dx\,dr\\
&=-2\int_0^t\int_{\Rd}\nabla\varphi(x)(V_\varepsilon*\rho_r^\varepsilon)^{\frac{m}{2}}(x)\nabla(V_\varepsilon*\rho_r^\varepsilon)^{\frac{m}{2}}(x)\,dx\,dr\\
&\quad-2\int_0^t\int_{A_r^\varepsilon}z_r^\varepsilon(x)(V_\varepsilon*\rho_r^\varepsilon)^{\frac{m}{2}-1}\nabla(V_\varepsilon*\rho_r^\varepsilon)^{\frac{m}{2}}(x)\,dx\,dr.
\end{split}
\end{align}
The last line is nearly identical to the end result of~\eqref{eq:weak-form-conv}. Here, we integrate over $A_r^\varepsilon = \{ V_\varepsilon*\rho_r^\varepsilon>0\}$ which is justified by~\Cref{lem:ibp}. As was the case in~\Cref{sec:mge2}, we need to show that the error term in the last line vanishes as $\varepsilon\downarrow 0$.
\begin{proof}[Proof of~\Cref{thm:exist_nonlinear_diffusion} for $1<m<2$] 
Convergence in the first term on the right-hand side of \eqref{eq:weak-form-conv-concise_2} can be treated as for the case $m\ge2$, due to Lemma~\ref{lem:h1-bound} and Propositon~\ref{prop:strong-convergence-v}. Hence we focus on the error term. For simplicity, let us assume $\varphi\in C_c^2(\R^d)$ since it can be approximated in such a way as described in~\Cref{sec:mge2}. We estimate the error by first expressing it as
\begin{align*}
    z^\varepsilon(x) &= V_\varepsilon*(\rho^\varepsilon\nabla \varphi)(x) - (V_\varepsilon*\rho^\varepsilon)(x)\nabla\varphi(x) \\
    &= \int_{\Rd}V_\varepsilon(x-y)(\nabla \varphi(y) - \nabla \varphi(x))d\rho^\varepsilon(y).
\end{align*}
Next, we apply the Mean Value theorem to the difference $|\nabla \varphi(y) - \nabla \varphi(x)| \le \|D^2\varphi\|_{L^\infty}|x-y|$ and obtain
\begin{align*}
    |z^\varepsilon| \le \|D^2\varphi\|_{L^\infty}\int_{\R^d} |x-y|V_\varepsilon(x-y)d\rho^\varepsilon(y).
\end{align*}
Now, we exploit the compact support of the generator $V_1$. Since $V_1$ is supported within $B_R$, then $V_\varepsilon$ is supported within $B_{R\varepsilon}$ which leads to
\begin{align}
\label{eq:zeps_new_estimate}
\begin{split}
    &\quad |z^\varepsilon(x)| \le \|D^2\varphi\|_{L^\infty} \int_{\{y \in \R^d \, | \, |x-y|\le R\varepsilon\}}|x-y| V_\varepsilon(x-y)d\rho^\varepsilon(y) \\
    &\le R\varepsilon\|D^2\varphi\|_{L^\infty} \int_{\Rd} V_\varepsilon(x-y)d\rho^\varepsilon(y) = R\varepsilon\|D^2\varphi\|_{L^\infty}(V_\varepsilon*\rho^\varepsilon)(x), \quad \forall x\in\Rd.
\end{split}
\end{align}
The last integral in~\eqref{eq:weak-form-conv-concise_2} can be estimated with~\eqref{eq:zeps_new_estimate} as follows
\begin{align*}
    &\quad \left|\int_0^t\int_{A_r^\varepsilon}
    z_r^\varepsilon(x)(V_\varepsilon*\rho_r^\varepsilon)^{\frac{m}{2}-1}\nabla(V_\varepsilon*\rho_r^\varepsilon)^{\frac{m}{2}}(x)\,dx\,dr
    \right|     \\
    &\le R\varepsilon \|D^2\varphi\|_{L^\infty}\int_0^t \int_{A_r^\varepsilon}(V_\varepsilon*\rho_r^\varepsilon)^{\frac{m}{2}}\left|\nabla(V_\varepsilon*\rho_r^\varepsilon)^{\frac{m}{2}}(x)\right|\,dx\,dr \\
    &\le R\varepsilon \|D^2\varphi\|_{L^\infty}\|(v^\varepsilon)^\frac{m}{2}\|_{L^2([0,T]\times\Rd)}\|\nabla(v^\varepsilon)^\frac{m}{2}\|_{L^2([0,T]\times\Rd)}.
\end{align*}
We have suggestively recalled the notation $v^\varepsilon = V_\varepsilon *\rho^\varepsilon$ precisely with~\Cref{lem:h1-bound} in mind; $(v^\varepsilon)^\frac{m}{2}$ is uniformly bounded in $L^2([0,T]; H^1(\R^d))$. Hence, the last integral is uniformly bounded in $\varepsilon$. Moreover, the prefactor of vanishing $\varepsilon$ implies that the last term of~\eqref{eq:weak-form-conv-concise_2} converges to zero in the limit, thus recovering
\[
\int_{\Rd}\varphi(x) d\rho_t(x)=\int_{\Rd}\varphi(x) d\rho_0(x)-2\int_0^t\int_{\Rd}\nabla\varphi(x)(\rho_r)^{\frac{m}{2}}(x)\nabla(\rho_r)^{\frac{m}{2}}(x)\,dx\,dr.
\]
By means of the chain rule for Sobolev spaces, one can prove $\nabla\rho^m=2\rho ^{\frac{m}{2}}\nabla \rho^{\frac{m}{2}}$ (in the weak sense). More precisely, we can see $\rho^m=G\circ u$, for $u=\rho^{\frac{m}{2}}$ and $G(x)=x^2$. The work by Dahlberg and Kenig~\cite{DalKen84} establishes uniqueness of very weak solutions for \eqref{eq:pme}, for $m>1$. As a byproduct, we also infer that our solution is a gradient flow in the sense of \cite[Theorem 11.2.5]{AGS}, meaning 
\[
\int_0^T\int_\Rd\frac{|\nabla\rho^m|^2}{\rho}\,dx\,dt<\infty.
\]

As for general energies induced by $F$ satisfying~\ref{ass:AGS-F}, \ref{ass:diff-F}, \ref{ass:diff-reg}, and~\ref{ass:F-m}, a combination of the same estimates from~\Cref{rem:consistency-general-F} (disregarding the H\"older estimate) and this new technique for treating $z^\varepsilon$ yield the full result.
\end{proof}

We now justify the integration by parts step going from~\eqref{eq:weak-form-conv-concise_1} to~\eqref{eq:weak-form-conv-concise_2}. Recall the notation $v_r^\varepsilon = V_\varepsilon*\rho_r^\varepsilon$.
\begin{lem}
    \label{lem:ibp}
    For fixed $\varepsilon,\, t>0$, and $\varphi \in C_c^1(\Rd)$, there holds
    \begin{equation}
    \label{eq:IBP}
    \int_0^t\int_{\Rd}(v_r^\varepsilon)^{m-1}(x) \nabla V_\varepsilon*(\rho_r^\varepsilon \nabla \varphi)(x) dx dr = -\int_0^t \int_{A_r^\varepsilon} \nabla (v_r^\varepsilon)^{m-1}(x)V_\varepsilon* (\rho_r^\varepsilon \nabla \varphi)(x) dx dr.
    \end{equation}
    In particular, both integrals converge absolutely.
\end{lem}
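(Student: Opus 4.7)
The plan is to justify the integration by parts by regularizing $(v_r^\varepsilon)^{m-1}$ via $(v_r^\varepsilon + \delta)^{m-1}$ for $\delta > 0$ — a $C^1$ function on $\R^d$ since $v_r^\varepsilon = V_\varepsilon * \rho_r^\varepsilon \in C^1(\R^d)$ by~\ref{ass:v1} — performing standard IBP at the regularized level, and passing to the limit $\delta \downarrow 0$ by dominated convergence. Before proceeding, I would record three basic observations. First, the vector field $W_r(x) := V_\varepsilon * (\rho_r^\varepsilon \nabla \varphi)(x)$ lies in $C^1(\R^d;\R^d)$ with compact support contained in the fixed set $K := \supp \varphi + \overline{B_{R\varepsilon}}$, and $\nabla \cdot W_r = \nabla V_\varepsilon * (\rho_r^\varepsilon \nabla \varphi)$ by commuting differentiation and convolution. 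Second, since $v_r^\varepsilon \ge 0$ attains its minimum value at every point of $(A_r^\varepsilon)^c = \{v_r^\varepsilon = 0\}$, the $C^1$ first-order condition forces $\nabla v_r^\varepsilon \equiv 0$ on $(A_r^\varepsilon)^c$. Third — and crucially — the non-negativity of $V_\varepsilon$ and of $\rho_r^\varepsilon$ yields the pointwise bound
\[
|W_r(x)| \le \|\nabla \varphi\|_{L^\infty}\, v_r^\varepsilon(x),
\]
since $|W_r(x)| \le \int V_\varepsilon(x-y)|\nabla\varphi(y)|\,d\rho_r^\varepsilon(y)$.

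For each $\delta \in (0,1]$, the product $(v_r^\varepsilon + \delta)^{m-1}\,W_r$ is $C^1$ with compact support, so the divergence theorem gives directly, for every $r \in [0,t]$,
\[
\int_{\R^d}(v_r^\varepsilon + \delta)^{m-1}\,\nabla \cdot W_r\, dx = -(m-1)\int_{\R^d}(v_r^\varepsilon + \delta)^{m-2}\nabla v_r^\varepsilon \cdot W_r\, dx.
\]
I would then send $\delta \downarrow 0$ on each side. The LHS is handled immediately by dominated convergence: the integrand is majorized by $(\|v_r^\varepsilon\|_{L^\infty}+1)^{m-1}\,\|\nabla \cdot W_r\|_{L^\infty}\,\mathbf{1}_K$ uniformly in $\delta$. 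For the RHS, the key pointwise bound combined with $(v_r^\varepsilon + \delta)^{m-2}\,v_r^\varepsilon \le (v_r^\varepsilon + \delta)^{m-1}$ produces the $\delta$-uniform majorant
\[
\big|(v_r^\varepsilon+\delta)^{m-2}\nabla v_r^\varepsilon \cdot W_r\big| \le \|\nabla \varphi\|_{L^\infty}\,\|\nabla v_r^\varepsilon\|_{L^\infty}\,(\|v_r^\varepsilon\|_{L^\infty}+1)^{m-1}\,\mathbf{1}_K,
\]
which is integrable. Pointwise, the integrand vanishes on $(A_r^\varepsilon)^c$ (since $\nabla v_r^\varepsilon = 0$) and converges on $A_r^\varepsilon$ to $(v_r^\varepsilon)^{m-2}\nabla v_r^\varepsilon \cdot W_r = \tfrac{1}{m-1}\nabla (v_r^\varepsilon)^{m-1}\cdot W_r$. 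Dominated convergence then identifies the limit as $\tfrac{1}{m-1}\int_{A_r^\varepsilon}\nabla (v_r^\varepsilon)^{m-1}\cdot W_r\, dx$. Since all bounds are uniform in $r \in [0,t]$, Fubini allows integration in time to recover~\eqref{eq:IBP}; the same majorants yield absolute convergence of both sides.

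The main obstacle — and what makes the statement delicate precisely when $1<m<2$ — is the integrability of $\nabla (v_r^\varepsilon)^{m-1} = (m-1)(v_r^\varepsilon)^{m-2}\nabla v_r^\varepsilon$ on $A_r^\varepsilon$: the factor $(v_r^\varepsilon)^{m-2}$ has a negative exponent and diverges near $\partial A_r^\varepsilon$, so a priori this gradient is not locally integrable and the RHS of~\eqref{eq:IBP} is not obviously well-defined. The argument circumvents this precisely via the pointwise bound $|W_r|\le \|\nabla\varphi\|_{L^\infty} v_r^\varepsilon$, which supplies an extra factor of $v_r^\varepsilon$ exactly where it is needed to absorb one power of the singularity, reducing $(v_r^\varepsilon)^{m-2}\,v_r^\varepsilon$ to the bounded quantity $(v_r^\varepsilon)^{m-1}$. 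This cancellation — that $W_r$ vanishes wherever $v_r^\varepsilon$ does, by virtue of the non-negativity of the mollifier and the measure — is the essential structural feature enabling the IBP.
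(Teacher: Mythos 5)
Your argument is correct, and it takes a genuinely different route from the paper's proof. The paper establishes \eqref{eq:IBP} by writing the left-hand side as a limit of directional difference quotients $I_\sigma$, transferring the quotient onto $(v_r^\varepsilon)^{m-1}$ by a change of variables, restricting to $A_r^\varepsilon$ because $V_\varepsilon*(\rho_r^\varepsilon\nabla\varphi)$ vanishes off that set, and then passing $\sigma\to 0$ via the extended dominated convergence theorem (\Cref{thm:EDCT}) with majorants of the form $C\int_0^1|\nabla v(x-s\sigma e)|\,ds$, whose integrals converge by translation invariance and $\|\nabla V_\varepsilon\|_{L^1}<\infty$; absolute convergence of the right-hand side is obtained there through the bound $|\nabla (v_r^\varepsilon)^{m-1}||V_\varepsilon*(\rho_r^\varepsilon\nabla\varphi)|\le \tfrac{2(m-1)}{m}|\nabla (v_r^\varepsilon)^{m/2}|(v_r^\varepsilon)^{m/2}$ together with \Cref{lem:h1-bound}. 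You instead regularise the singular power as $(v_r^\varepsilon+\delta)^{m-1}$, apply the classical divergence theorem at each $\delta>0$, and pass $\delta\downarrow 0$ by the ordinary dominated convergence theorem, using that $\nabla v_r^\varepsilon=0$ on $\{v_r^\varepsilon=0\}$ and the same key cancellation $|V_\varepsilon*(\rho_r^\varepsilon\nabla\varphi)|\le\|\nabla\varphi\|_{L^\infty}v_r^\varepsilon$ (the paper's \eqref{eq:vepsphi}), which you correctly identify as the structural heart of the matter. Your route is arguably more elementary (no difference quotients, no extended DCT, no appeal to the $L^2_tH^1_x$ estimate for absolute convergence), at the price of using the compact support of $V_1$: you need $\supp W_r\subset \supp\varphi+\overline{B_{R\varepsilon}}$ for the boundary-term-free divergence theorem, and boundedness of $\nabla V_\varepsilon$ (hence of $\|\nabla v_r^\varepsilon\|_{L^\infty}$) for your majorants. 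This is legitimate here, since the lemma sits in the $1<m<2$ regime where $\supp V_1\subset B_R$ is a standing assumption, but note that the paper's proof only uses \ref{ass:v1} (in particular $|\nabla V_1|\in L^1$ and the linear growth bound) and so covers kernels with unbounded support; if you wanted the same generality you would have to replace $\|\nabla v_r^\varepsilon\|_{L^\infty}$ by $\sup_{K}|\nabla v_r^\varepsilon|$ (finite and uniform in $r$ by the moment bounds of \Cref{prop:en-ineq-mom-bound}) and justify the $\delta$-level integration by parts without compact support of $W_r$, e.g.\ by a cut-off argument, since $W_r$ is then only in $L^1\cap L^\infty$.
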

\begin{proof}
We begin by proving both integrals in~\eqref{eq:IBP} converge absolutely. For the integral on the left-hand side of~\eqref{eq:IBP}, we estimate
\begin{align}
\label{eq:unifint}
\begin{split}
    &\quad \int_0^t\int_{\Rd}\left|(v_r^\varepsilon)^{m-1}(x) \nabla V_\varepsilon*(\rho_r^\varepsilon\nabla \varphi)(x)\right| dxdr  \\ &\le \int_0^t\int_{\Rd}\left(\int_{\Rd}
    V_\varepsilon(x-y)d\rho(y)
    \right)^{m-1}\left(\int_{\Rd}\left|
    \nabla V_\varepsilon(x-z)\nabla\varphi(z)
    \right|d\rho_r^\varepsilon(z)\right) dx dr  \\
    &\le \int_0^t\int_{\Rd}\|V_\varepsilon\|_{L^\infty}^{m-1}\|\nabla \varphi\|_{L^\infty}\int_{\Rd} |\nabla V_\varepsilon(x-z)|d\rho_r^\varepsilon(z) dx  dr \\
    &\le \int_0^t\|V_\varepsilon\|_{L^\infty}^{m-1}\|\nabla \varphi\|_{L^\infty} \|\nabla V_\varepsilon\|_{L^1}dr \le \|V_\varepsilon\|_{L^\infty}^{m-1}\|\nabla \varphi\|_{L^\infty} \|\nabla V_\varepsilon\|_{L^1} t,
\end{split}
\end{align}
where the last line is obtained by Fubini's theorem. Therefore, the integral on the left-hand side of~\eqref{eq:IBP} is absolutely convergent. Turning to the integral on the right-hand side of~\eqref{eq:IBP}, we first record
\begin{align}
    \label{eq:vepsphi}
    \begin{split}
        |V_\varepsilon * (\rho_r^\varepsilon \nabla \varphi)(x)| &\le \int \left|V_\varepsilon(x-y)\nabla\varphi(y)\right|d\rho_r^\varepsilon(y) \le \|\nabla \varphi\|_{L^\infty}\int V_\varepsilon(x-y)d\rho(y)   \\
        &= \|\nabla \varphi\|_{L^\infty}v_r^\varepsilon(x) = \|\nabla \varphi\|_{L^\infty}v_r^\varepsilon(x) \chi_{A_r^\varepsilon}(x),
    \end{split}
\end{align}
where $\chi_{A_r^\varepsilon}(x)$ is the indicator function on the set $A_r^\varepsilon$. With~\eqref{eq:vepsphi}, we obtain
\begin{align*}
    |\nabla (v_r^\varepsilon)^{m-1}(x)| |V_\varepsilon*(\rho_r^\varepsilon\nabla \varphi)(x)| &\le |\nabla (v_r^\varepsilon)^{m-1}(x)| v_r^\varepsilon(x)\chi_{A_r^\varepsilon}(x)\\
    &= (m-1)|\nabla v_r^\varepsilon| (v_r^\varepsilon)^{m-2} v_r^\varepsilon \chi_{A_r^\varepsilon} = \frac{2(m-1)}{m} |\nabla (v_r^\varepsilon)^\frac{m}{2}|(v_r^\varepsilon)^\frac{m}{2}\chi_{A_r^\varepsilon}.
\end{align*}
Recalling~\Cref{lem:h1-bound}, we conclude by comparison that
\[
|\nabla (v_r^\varepsilon)^{m-1}(x)| |V_\varepsilon*(\rho_r^\varepsilon\nabla \varphi)(x)| \in L^1((0,t)\times \Rd),
\]
which shows that the integral on the right-hand side of~\eqref{eq:IBP} is absolutely convergent.

\ul{Integration by parts:} 
For brevity, we drop the subscript $r\in(0,t)$ and the superscript $\varepsilon>0$ so we consider $v = V *\rho$ in place of $v_r^\varepsilon = V_\varepsilon * \rho_r^\varepsilon$. In order to verify~\eqref{eq:IBP}, we fix $\sigma>0$ and a direction $e\in \mathbb{S}^{d-1}$ and look at the following difference quotient
\begin{align*}
    I_\sigma &:= \int_0^t\int_{\Rd} v^{m-1}(x) \frac{V*(\rho \nabla \varphi)(x+\sigma e) - V*(\rho \nabla \varphi)(x)}{\sigma}dx dr.
\end{align*}
Owing to the uniform bound from~\eqref{eq:vepsphi} and Lebesgue's Dominated Convergence Theorem, we have
\[
\lim_{\sigma\to 0} I_\sigma =e \cdot \int_0^t\int_{\Rd}(v_r^\varepsilon)^{m-1}(x) \nabla V_\varepsilon*(\rho_r^\varepsilon \nabla \varphi)(x) dx dr,
\]
recovering the left-hand side of~\eqref{eq:IBP} (along any arbitrary direction $e\in \mathbb{S}^{d-1}$). On the other hand, by changing variables we also have
\begin{align*}
    I_\sigma &= \int_0^t\int_{\Rd}\frac{v^{m-1}(x-\sigma e) - v^{m-1}(x)}{\sigma}V*(\rho \nabla \varphi)(x)dx dr \\
    &= -\int_0^t\int_{ A_r^\varepsilon}\frac{v^{m-1}(x) - v^{m-1}(x-\sigma e)}{\sigma}V*(\rho\nabla \varphi)(x)dxdr.
\end{align*}
We are allowed to restrict the integration region to $A_r^\varepsilon$ due to~\eqref{eq:vepsphi}; if $x\notin A_r^\varepsilon$, then $|V*(\rho\nabla \varphi)(x)| \le \|\nabla \varphi\|_{L^\infty}v(x) = 0$. In order to prove~\eqref{eq:IBP}, we wish to show
\begin{align}
\label{eq:ibp_power}
\begin{split}
    \lim_{\sigma\to 0}I_\sigma &= \lim_{h\to 0}\left(-\int_0^t\int_{ A_r^\varepsilon}\frac{v^{m-1}(x) - v^{m-1}(x-\sigma e)}{\sigma }V*(\rho\nabla \varphi)(x)dxdr\right) \\
    &= -e\cdot \int_0^t \int_{A_r^\varepsilon} \nabla (v_r^\varepsilon)^{m-1}(x)V_\varepsilon* (\rho_r^\varepsilon \nabla \varphi)(x) dx dr.
\end{split}
\end{align}
The strategy is to apply the extended Dominated Convergence Theorem (\Cref{thm:EDCT}) by exhibiting an appropriate sequence of majorants to the integrand in the first line of~\eqref{eq:ibp_power}.

The first step is to remember~\eqref{eq:vepsphi} and estimate the integrand of $I_\sigma$ as follows
\begin{align}
\label{eq:T_1}
\begin{split}
    \left|
    \frac{v^{m-1}(x) - v^{m-1}(x-\sigma e)}{\sigma}V*(\rho \nabla\varphi)(x)
    \right| &\le \|\nabla \varphi\|_{L^\infty} \left|\frac{v^{m-1}(x) - v^{m-1}(x-\sigma e)}{\sigma}\right|\left|V*\rho(x)\right|     \\
    &= \|\nabla \varphi \|_{L^\infty} \left|\frac{v^{m}(x) - v^{m-1}(x-\sigma e)v(x)}{\sigma }\right|.
\end{split}
\end{align}
In the last line, we have distributed $v = V*\rho$ into the difference quotient. We wish to re-express the term $v^{m-1}(x-\sigma e)v(x)$ as $v^{m}(x) + $ error term. For this, we use the Mean Value Theorem to write
\begin{align*}
    &v(x) = v(x-\sigma e) - \int_0^1 \frac{d}{ds}v(x-s\sigma e)ds= v(x-\sigma e) + \sigma e\cdot \int_0^1 \nabla v(x-s\sigma e)ds.
\end{align*}
Substituting this into $v^m(x) - v^{m-1}(x-\sigma e)v(x)$ gives
\begin{align*}
    v^m(x) - v^{m-1}(x-\sigma e)v(x) = v^m(x) - v^{m-1}(x-\sigma e) \left(
    v(x-\sigma e) + \sigma e\cdot \int_0^1\nabla v(x-s\sigma e)ds
    \right).
\end{align*}
Inserting this into~\eqref{eq:T_1} yields the estimate
\begin{align}
\label{eq:diff_quot_MVT}
\begin{split}
    &\quad \left|
    \frac{v^{m-1}(x) - v^{m-1}(x-h\sigma e)}{\sigma }V*(\rho \nabla\varphi)(x)
    \right|     \\ 
    &\le \|\nabla \varphi\|_{L^\infty}\frac{1}{\sigma }\left|
    v^m(x) - v^{m}(x-\sigma e) - v^{m-1}(x-\sigma e)\sigma e\cdot \int_0^1 \nabla V*(\rho\nabla \varphi)(x - s\sigma e)ds
    \right|.
\end{split}
\end{align}
We use the Mean-Value Theorem again with the initial difference (valid since $m>1$)
\[
v^m(x) - v^m(x-\sigma e) = -\int_0^1 \frac{d}{ds}v^m(x-s\sigma e)ds = m\sigma e\cdot \int_0^1 v^{m-1}(x-s\sigma e)\nabla v(x-s\sigma e) ds
\]
and insert this into~\eqref{eq:diff_quot_MVT} to obtain
\begin{align}
\label{eq:EDCT1}
\begin{split}
    &\quad \left|
    \frac{v^{m-1}(x) - v^{m-1}(x-\sigma e)}{\sigma }V*(\rho \nabla\varphi)(x)
    \right|     \\ 
    &\le \|\nabla \varphi\|_{L^\infty}\frac{1}{\sigma }\left|m\sigma e\cdot \int_0^1 v^{m-1}(x-s\sigma e)\nabla v(x-s\sigma e)ds - v^{m-1}(x-\sigma e)\sigma e\cdot \int_0^1 \nabla V*(\rho\nabla \varphi)(x - s\sigma e)ds
    \right|     \\
    &\le \|\nabla \varphi\|_{L^\infty}\left|
    m\int_0^1v^{m-1}(x-s\sigma e)\nabla v(x-s\sigma e) - v^{m-1}(x-\sigma e)\nabla V*(\rho\nabla \varphi)(x-s\sigma e)ds
    \right|  \\
    &\le \|\nabla \varphi\|_{L^\infty}\int_0^1\left|mv^{m-1}(x-s\sigma e) + \|\nabla \varphi\|_{L^\infty} v^{m-1}(x-\sigma e)\right|\left|
    \nabla v(x-s\sigma e)
    \right|ds.  \\
    &\le C_{m, \varphi}\|V\|_{L^\infty}^{m-1} \int_0^1 |\nabla v(x-s\sigma e)|ds.
\end{split}
\end{align}
In the third line of~\eqref{eq:EDCT1}, we eliminated the common factor of $\sigma$ together with the trivial estimate $|e|=1$. In the fourth line of~\eqref{eq:EDCT1}, we estimated similar to~\eqref{eq:vepsphi} the convolution 
\[
|\nabla V*(\rho \nabla \varphi)| \le \|\nabla \varphi \|_{L^\infty}|\nabla V*\rho| = \|\nabla\varphi\|_{L^\infty}|\nabla v|.
\]
In the final line of~\eqref{eq:EDCT1}, we used the following inequality
\[
v^{m-1}(x) = \left(
\int V(x-y)d\rho(y)
\right)^{m-1} \le \|V\|_{L^\infty}^{m-1} \left(
\int d\rho(y)
\right)^{m-1} = \|V\|_{L^\infty}^{m-1}.
\]
We are now in a position to apply~\Cref{thm:EDCT} with $X = (0,t)\times \Rd$ where $y=(r,x) \in X$ and
\begin{align*}
    f^\sigma(r,x)     &= \frac{v^{m-1}(x) - v^{m-1}(x-\sigma e)}{\sigma }V*(\rho \nabla\varphi)(x)\chi_{A_r^\varepsilon}(x),   \\
    g^\sigma (r,x)     &= C_{m, \varphi}\|V\|_{L^\infty}^{m-1} \int_0^1 |\nabla v(x-s\sigma e)|ds.
\end{align*}
Here, $\chi_{A_r^\varepsilon}(x)$ is the indicator function of the set $A_r^\varepsilon$. Remember that we have suppressed the dependence on $r\in(0,t)$ in $\rho$. The first assumption of~\Cref{thm:EDCT} has been verified by the estimate of~\eqref{eq:EDCT1}. We can verify the second assumption of~\Cref{thm:EDCT} since the pointwise limits of $f^\sigma $ and $g^\sigma $ as $\sigma \to 0$ are
\begin{align*}
    f(r,x) = e\cdot\nabla (v)^{m-1}(x) V*(\rho\nabla \varphi)(x)\chi_{A_r^\varepsilon}(x),    \quad
    g(r,x)  = C_{m,\varphi}\|V\|_{L^\infty}^{m-1}|\nabla v(x)|.
\end{align*}
The pointwise limit for $f^\sigma $ is justified since $A_r^\varepsilon$ is an open set. As for the pointwise limit of $g^\sigma $, the usual Dominated Convergence Theorem suffices.

It remains to check the third assumption of~\Cref{thm:EDCT} which we do with Fubini;
\begin{align*}
    &\quad \int_X g^\sigma (r,x) dy = C_{m,\varphi} \|V\|_{L^\infty}^{m-1}\int_0^t\int_{\Rd} \int_0^1|\nabla v (x-s\sigma e)|ds dx dr\\
    &= C_{m,\varphi}\|V\|_{L^\infty}^{m-1} \int_0^t\int_0^1 \int_{\Rd} |\nabla v(x-s\sigma e)|dx ds dr = C_{m,\varphi} \|V\|_{L^\infty}^{m-1}\int_0^t\int_0^1\int_{\Rd}|\nabla v(x)|dx ds dr   \\
    &= C_{m,\varphi}\|V\|_{L^\infty}^{m-1}\int_0^t\int_{\R^d}|\nabla v(x)|dx dr = \int_X g(r,x)dy.
\end{align*}
Therefore, by~\Cref{thm:EDCT}, we have $\int_X f^\sigma (r,x)dy \to \int_X f(r,x)dy$ as $\sigma \to 0$ which is precisely~\eqref{eq:ibp_power};
\begin{align*}
    \lim_{\sigma \to 0}I_\sigma  &= \lim_{\sigma \to 0}\left(-\int_0^t\int_{ A_r^\varepsilon}\frac{v^{m-1}(x) - v^{m-1}(x-\sigma e)}{\sigma }V*(\rho\nabla \varphi)(x)dxdr\right) \\
    &= -\lim_{\sigma \to 0} \int_X f^\sigma (r,x)dy  = -\lim_{\sigma \to 0}\int_X f(r,x)dy   \\
    &= -e\cdot \int_0^t \int_{ A_r^\varepsilon} \nabla (v_r^\varepsilon)^{m-1}(x)V_\varepsilon* (\rho_r^\varepsilon \nabla \varphi)(x) dx dr.
\end{align*}
\end{proof}

\section{Convexity, uniqueness, and particle approximation}
\label{sec:convexity}
In this section, we sketch the argument adapted from~\cite{blob_weighted_craig} to prove~\Cref{thm:uniqueness} and~\Cref{cor:particle_approx}. Recall that we assume~\ref{ass:F-m} with $m>1$. To simplify the exposition, we set $F'(0) = 0$. In view of the assumptions needed for the kernel $V_1$, see \ref{ass:v1}, it is not reasonable to choose $V_1$ convex, as we require finite second order moment. However, this does not prohibit $\lambda$-convexity of the functional $\mf^\varepsilon$ along geodesics. Indeed, differentiability of $\mf^\varepsilon$, as in~\cite[Proposition 3.10]{Patacchini_blob19}, holds in our case assuming $F$ satisfies \ref{ass:AGS-F}, \ref{ass:diff-F}, \ref{ass:diff-reg}, and it is convex (convexity is ensured by~\ref{ass:F-m}). Furthermore, we need $V_1\in C^2(\Rd)$ and $D^2 V_1\in L^\infty(\Rd)$. In order to give a few explanations in this direction, fix $\rho_1,\rho_2 \in \mptrd$ and consider the geodesic connecting $\rho_1$ to $\rho_2$ defined by
\[
\rho_\alpha := ((1-\alpha)\pi^1 + \alpha\pi^2)_\# \gamma, \quad \alpha\in[0,1],
\]
where $\gamma \in \Gamma(\R^d\times \R^d)$ satisfies
\[
\pi^i_{\#}\gamma=\rho_i \mbox{ for } i=1,2.
\]
Owing to the regularisation by $V_\varepsilon$, one can show (generalising and adapting the computations in~\cite[Propositions 3.4 and 3.6]{blob_weighted_craig}) that the functional $\mf^\varepsilon$ satisfies a geodesic `above the tangent line' inequality~\cite[Proposition 2.8]{C17} in the sense that
\begin{equation}
    \label{eq:above_tan}
    \mf^\varepsilon(\rho_2) - \mf^\varepsilon(\rho_1) - \left.\frac{d}{d\alpha}\right|_{\alpha = 0}\mf^\varepsilon(\rho_\alpha) \ge - \frac{\lambda_F^\varepsilon}{2}d_W^2(\rho_1,\rho_2),
\end{equation}
where
\[
\lambda_F^\varepsilon: = -\frac{c_2\|D^2V_\varepsilon\|_{L^\infty}\|V_\varepsilon\|_{L^\infty}^{m-2}}{m-1}.
\]
The technical assumption $F'(0) = 0$ enters here in the computation of~\eqref{eq:above_tan}. As a consequence we infer $\lambda_F^\varepsilon$-convexity similar to~\cite[Proposition 3.11]{Patacchini_blob19} as well as a characterisation of the subdifferential, \cite[Proposition 3.12]{Patacchini_blob19} --- adapted to our functional, meaning that 
\[
\frac{\delta\mf^\varepsilon}{\delta\rho}=V_\varepsilon*F'(V_\varepsilon*\rho).
\]
As for the modulus of convexity $\lambda_F^\varepsilon$, similarly to \cite{blob_weighted_craig}, using~\ref{ass:F-m} and fixed $m>1$, we have
\[
\lambda_F^\varepsilon\approx-\varepsilon^{-2-d(m-1)},
\]
meaning that $\lambda_F^\varepsilon\to-\infty$ as $\varepsilon\to0$.
The information above are enough to prove existence of a unique gradient flow of $\mf^\varepsilon$, for $\varepsilon>0$ fixed, following \cite{AGS} and \cite[Section 5]{Patacchini_blob19} for regularised energies, thus proving~\Cref{thm:uniqueness}. We omit the details and refer the interested reader again to similar computations in~\cite{blob_weighted_craig}.

The regularisation of the energy by mollifiers $V_1$ satisfying~\ref{ass:v1} used in this manuscript, as well as in \cite{BE22}, allows to extend stability of gradient flows to the case $m>1$ --- further assuming $V_1$ is compactly supported for $1<m<2$ (c.f.~\Cref{sec:mle2}). The advantage of using convex energies is given by the possibility of using stability estimates with the $2$-Wasserstein distance so that one obtains a particle approximation when the number of particles involved depends on $\varepsilon$, i.e. $N=N(\varepsilon)$. This is a qualitative result, recently proved rigorously in~\cite[Theorem 1.4]{blob_weighted_craig}, for $m=2$. In our setting, we consider~\eqref{eq:nlie-class} as a continuity equation with velocity given by $-\nabla V_\varepsilon*F'(V_\varepsilon*\rho)$.  Then, under mild assumptions on the mollifer $V_1$ and the function $F$, the empirical measure $\rho^N_\varepsilon(t) = \frac{1}{N}\sum_{j=1}^N \delta_{x^j_\varepsilon(t)}$ is a weak solution to~\eqref{eq:nlie-class} provided the particles satisfy the following ODE system
\[
\dot{x}^i_\varepsilon(t) = - \nabla \int_{\R^d} V_\varepsilon(x^i_\varepsilon(t) - y) F'\left(
\frac{1}{N}\sum_{j=1}^NV_\varepsilon(y - x^j_\varepsilon(t))
\right)dy \quad \forall i=1,\dots, N.
\]
The regularisation in~\eqref{eq:nlie-class} is done in the same spirit as~\cite{blob_weighted_craig}, therefore an analogous version of~\cite[Theorem 1.4]{blob_weighted_craig} also holds true in our setting, with $\lambda_F^\varepsilon$ specified above and $m>1$. More precisely, as a consequence of the usual stability estimate for $\lambda$-gradient flows, \cite[Theorem 11.2.1]{AGS}, we know
\[
d_W(\rhoe(t),\rho_\varepsilon^N(t))\le e^{-\lambda_F^\varepsilon t} d_W(\rho_\varepsilon^N(0),\rho(0)).
\]
Therefore, if we assume that for $\varepsilon\to0$ there exists $N=N(\varepsilon)\to+\infty$ such that
\[
e^{-\lambda_F^\varepsilon t} d_W(\rho_\varepsilon^N(0),\rho(0))\to0,
\]
we infer the mean field limit since $\rhoe$ is converging to a weak solution of \eqref{eq:nonlinear-diffusion}. 

\begin{appendices}
\section{Technical proofs}
\label{sec:proofs}
We begin with an extension of the Dominated Convergence Theorem~\cite[Chapter 4, Theorem 17]{R88}. This is quoted with less generality for our purposes.
\begin{thm}[Extended Dominated Convergence Theorem]
\label{thm:EDCT}
Let $(f^\sigma)_{\sigma>0}$ and $(g^\sigma)_{\sigma>0}$ be sequences of measurable functions on a (Lebesgue) measurable set $X\subset\R^n$ such that $g^\sigma\ge 0$ and suppose that there exist measurable functions $f, \, g$ satisfying the following assumptions.
\begin{enumerate}
	\item $|f^\sigma(y)|\le g^\sigma(y)$ for all $\sigma>0$ and pointwise almost every $y\in X$.
	\item $f^\sigma(y) \to f(y)$ and $g^\sigma(y) \to g(y)$ pointwise almost every $y\in X$ as $\sigma\to 0$.
	\item $\int_{X}g^\sigma(y) dy \to \int_{X} g(y) dy$ as $\sigma\to 0$.
\end{enumerate}
Then, we have $\int_{X}f^\sigma(y) dy \to \int_{X} f(y)dy$ as $\sigma\to 0$.
\end{thm}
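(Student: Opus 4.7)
The plan is to reduce to the classical statement of Fatou's lemma applied to the non-negative sequences $g^\sigma + f^\sigma$ and $g^\sigma - f^\sigma$. First, I would observe that assumptions (1)--(3) imply $g \ge 0$ a.e., $|f|\le g$ a.e., and $g\in L^1(X)$ (since the convergence $\int g^\sigma \to \int g$ is assumed to take place in the extended reals and $g^\sigma \ge 0$; if one wants $\int g<\infty$, this is forced by the fact that we need $\int g^\sigma$ and $\int f^\sigma$ to be finite quantities so that the conclusion is meaningful). In particular, $f \in L^1(X)$.

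Since the parameter $\sigma$ is continuous, I would first reduce to sequences: it suffices to show that for every sequence $\sigma_n \downarrow 0$ one has $\int_X f^{\sigma_n}\, dy \to \int_X f\, dy$. Indeed, if $\int_X f^\sigma\, dy$ failed to converge to $\int_X f\, dy$, one could extract a subsequence bounded away from $\int_X f\, dy$, which would contradict convergence along that subsequence.

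With $\sigma_n \downarrow 0$ fixed, I would apply the standard Fatou lemma twice. From $g^{\sigma_n} + f^{\sigma_n} \ge 0$ (a.e.) and $g^{\sigma_n} + f^{\sigma_n} \to g+f$ pointwise a.e., Fatou gives
\[
\int_X (g+f)\, dy \le \liminf_{n\to\infty} \int_X (g^{\sigma_n} + f^{\sigma_n})\, dy = \int_X g\, dy + \liminf_{n\to\infty}\int_X f^{\sigma_n}\, dy,
\]
where I used assumption (3) to evaluate the $g$-part. Cancelling the (finite) $\int_X g\, dy$ yields $\int_X f\, dy \le \liminf_n \int_X f^{\sigma_n}\, dy$. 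Symmetrically, applying Fatou to $g^{\sigma_n} - f^{\sigma_n} \ge 0$ produces $\int_X f\, dy \ge \limsup_n \int_X f^{\sigma_n}\, dy$. Combining the two inequalities yields the desired convergence.

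The main obstacle is really just book-keeping: ensuring that all integrals in the Fatou inequalities are finite (so that cancellation of $\int_X g\, dy$ is legitimate) and that the reduction from continuous parameter $\sigma$ to sequences $\sigma_n$ is clean. Both are handled by the observations in the first paragraph, so the argument is short once the set-up is in place.
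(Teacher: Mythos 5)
Your proof is correct: the paper does not prove this result itself but quotes it from Royden (cited as \cite[Chapter 4, Theorem 17]{R88}), and the standard proof there is exactly your argument, applying Fatou's lemma to the non-negative sequences $g^\sigma+f^\sigma$ and $g^\sigma-f^\sigma$ and cancelling the finite $\int_X g$. Your additional remarks on the reduction to sequences $\sigma_n\downarrow 0$ and on the (implicitly intended) integrability of $g$ are sensible book-keeping and do not change the approach.
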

Next, we turn to estimates related to the JKO scheme with the regularised energy $\mf^\varepsilon$ and regularity of $V$ and $F$ under various assumptions.
\begin{rem}\label{rem:mom-ineq}
	From the definition of the $2$-Wasserstein distance and the inequality $|y|^2\le2|x|^2+2|x-y|^2$ it follows
	$$
	m_2(\rho_1)\le2m_2(\rho_0)+2d_W^2(\rho_0,\rho_1), \qquad \forall \rho_0,\rho_1\in\mptrd.
	$$
\end{rem}
For a function $F:[0,+\infty)\to (-\infty,+\infty]$ satisfying~\ref{ass:AGS-F}, its negative part can be estimated (c.f.~\cite[Remark 9.3.7]{AGS}) by
\[
F^-(s) \le c_1 s + c_2 s^{\alpha}, \quad \forall s\in[0,+\infty),
\]
for some constants $c_1, \, c_2 >0$ and we can take, without loss of generality, $\alpha \in \left(\frac{d}{d+2},1\right)$. With this notation, we have the following result.
\begin{lem}
\label{lem:lower-bound-F}
Suppose $F:[0,+\infty) \to (-\infty,+\infty]$ satisfies~\ref{ass:AGS-F} for some $\alpha \in \left(\frac{d}{d+2},1\right)$. Then, whenever $\rho\in \mptrd$ has a density with respect to Lebesgue, $F^-(\rho)\in L^1(\R^d)$. In particular, the functional $\mf$ admits the lower bound
\[
\mf[\rho] \ge -c_1 - c_2 \, C_{d,\alpha} \, (1 + m_2(\rho))^\alpha > -\infty,
\]
where $C_{d,\alpha}>0$ is an explicitly computable constant depending only on $d$ and $\alpha$. Moreover, for any $\varepsilon>0$ and $V_1$ with finite second moment as in~\ref{ass:v1}, the functional $\mf^\varepsilon$ admits the lower bound
\begin{equation}
\label{eq:low-bdd-fveps}
\mf^\varepsilon[\rho]\ge -c_1 - c_2 \tilde{C}_{d,\alpha}(1 + \varepsilon^2 m_2(V_1) + m_2(\rho))^\alpha,
\end{equation}
where $\tilde{C}_{d,\alpha} = 4C_{d,\alpha}$ but we suppress multiplicative constants and abuse notation by reusing $C_{d,\alpha}$.
\end{lem}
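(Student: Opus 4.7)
The plan is to reduce everything to the pointwise bound on $F^-$ provided by~\ref{ass:AGS-F}. The assumption $\liminf_{s\downarrow 0}F(s)/s^\alpha > -\infty$ together with the superlinear growth at infinity yields the estimate $F^-(s)\le c_1 s + c_2 s^\alpha$ for some (new) $c_1, c_2>0$ and $\alpha\in(d/(d+2),1)$, which is precisely what is recorded just before the statement. Integrating this over $\Rd$ against $\rho\in\mpdtard$ gives
\[
\int_\Rd F^-(\rho)\,dx \le c_1\int_\Rd \rho\,dx + c_2\int_\Rd \rho^\alpha\,dx = c_1 + c_2\int_\Rd \rho^\alpha\,dx,
\]
so the only real task is to bound $\int_\Rd \rho^\alpha\,dx$ by a power of the second moment.

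Next, I would insert a weight $(1+|x|^2)$ and apply H\"older's inequality with conjugate exponents $1/\alpha$ and $1/(1-\alpha)$:
\[
\int_\Rd \rho^\alpha\,dx = \int_\Rd \left[(1+|x|^2)\rho\right]^\alpha (1+|x|^2)^{-\alpha}\,dx \le \left(\int_\Rd (1+|x|^2)\rho\,dx\right)^{\!\alpha}\!\!\left(\int_\Rd (1+|x|^2)^{-\alpha/(1-\alpha)}\,dx\right)^{\!1-\alpha}.
\]
The first factor equals $(1+m_2(\rho))^\alpha$. The second factor is finite exactly because $2\alpha/(1-\alpha) > d$, which is equivalent to $\alpha > d/(d+2)$; this is where the assumption on $\alpha$ enters in an essential way. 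Defining $C_{d,\alpha}$ as the resulting $(1-\alpha)$-power, we obtain $\int_\Rd \rho^\alpha dx \le C_{d,\alpha}(1+m_2(\rho))^\alpha$, and hence $F^-(\rho) \in L^1(\Rd)$ and
\[
\mf[\rho] \ge -\int_\Rd F^-(\rho)\,dx \ge -c_1 - c_2 C_{d,\alpha}(1+m_2(\rho))^\alpha.
\]

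For the regularised functional, the plan is to apply the same estimate to the probability density $V_\varepsilon*\rho$ and then control its second moment by that of $\rho$. Writing $m_2(V_\varepsilon*\rho) = \iint |y+z|^2 V_\varepsilon(z)\,d\rho(y)dz$ and using $|y+z|^2\le 2|y|^2+2|z|^2$ together with $\int|z|^2 V_\varepsilon(z)dz = \varepsilon^2 m_2(V_1)$, one finds $m_2(V_\varepsilon*\rho)\le 2\varepsilon^2 m_2(V_1) + 2m_2(\rho)$. Substituting into the $\mf$-bound with $V_\varepsilon*\rho$ in place of $\rho$ and using $(1+2a+2b)^\alpha \le 2^\alpha(1+a+b)^\alpha \le 4(1+a+b)^\alpha$ yields \eqref{eq:low-bdd-fveps} with $\tilde C_{d,\alpha} = 4C_{d,\alpha}$.

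There is no real obstacle here; the whole argument is essentially Remark~9.3.7 of~\cite{AGS}. The only point that requires care is the integrability of $(1+|x|^2)^{-\alpha/(1-\alpha)}$, which is precisely what forces the hypothesis $\alpha > d/(d+2)$, and the bookkeeping of constants when passing from $\rho$ to $V_\varepsilon*\rho$ via the elementary second moment inequality above.
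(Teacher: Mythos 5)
Your proof is correct and follows essentially the same route as the paper: the pointwise bound $F^-(s)\le c_1 s + c_2 s^\alpha$, H\"older's inequality with a polynomial weight in $x$ (you use $(1+|x|^2)$ where the paper uses $(1+|x|)^2$, a purely cosmetic difference), finiteness of the weight integral precisely because $2\alpha/(1-\alpha)>d$, and the elementary estimate $m_2(V_\varepsilon*\rho)\le 2\varepsilon^2 m_2(V_1)+2m_2(\rho)$ applied to $\mf^\varepsilon[\rho]=\mf[V_\varepsilon*\rho]$. The constant bookkeeping matches the lemma's stated abuse of notation, so nothing further is needed.
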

\begin{proof}
The proof follows~\cite[Remark 9.3.7]{AGS}. We have $\int_{\Rd}F^-(\rho(x))dx \le c_1 + c_2 \int_{\Rd}\rho^{\alpha}(x)dx$. Using H\"older's inequality and $\frac{2\alpha}{1-\alpha}>d$, we have
\begin{align*}
&\quad \int_{\Rd}\rho^{\alpha}(x)dx = \int_{\Rd}\rho^{\alpha}(x)(1+|x|)^{2\alpha}(1+|x|)^{-2\alpha}dx  	\\
&\le \left(
\int_{\Rd}(1+|x|)^2\rho(x)dx
\right)^{\alpha}\left( \int_{\Rd}(1+|x|)^\frac{-2\alpha}{1-\alpha} dx \right)^{1-\alpha} 	\\
&\le C_{d,\alpha}(2 + 2m_2(\rho))^\alpha < +\infty.
\end{align*}
The lower bound for $\mf$ readily follows. It remains to establish the lower bound for $\mf^\varepsilon$. Recall that since $\mf^\varepsilon[\rho] = \mf[V_\varepsilon * \rho]$, we have
\[
\mf^\varepsilon[\rho] \ge -c_1 - c_2 \, C_{d,\alpha} \, (1 + m_2(V_\varepsilon * \rho))^\alpha.
\]
It suffices to estimate $m_2(V_\varepsilon * \rho)$. Using Fubini, we have
\begin{align*}
		&\quad m_2(V_\varepsilon * \rho) = \int_{\Rd}|x|^2 \int_{\Rd}V_\varepsilon(x-y) \, d\rho(y) \, dx 	\\
		&\le 2\int_{\R^d} \left(\int_{\Rd}
		|x-y|^2 V_\varepsilon(x-y) dx
		\right) d\rho(y) + 2\int_{\Rd} \left(\int_{\Rd}
		V_\varepsilon(x-y) dx
		\right) |y|^2 d\rho(y) 	\\
		&= 2\int_{\Rd}\left(\int_{\Rd}|z|^2 \varepsilon^{-d}V_1(z/\varepsilon)dz
		\right)d\rho(y) + 2m_2(\rho) 	= 2\varepsilon^2 m_2(V_1) + 2m_2(\rho).
\end{align*}
\end{proof}
The lower bound for $\mf^\varepsilon$ is used to prove that one step of the JKO scheme~\eqref{eq:jko} is well-defined.
\begin{lem}
	\label{lem:one-step-JKO-F}
	Fix $\varepsilon>0$ and suppose $F: [0,+\infty) \to (-\infty,+\infty]$ satisfies~\ref{ass:AGS-F}. For fixed $\bar{\rho}\in \mptrd$, the functional $\rho \in \mptrd \mapsto \frac{d_W^2(\rho,\bar{\rho})}{2\tau} + \mf^\varepsilon[\rho] \in (-\infty,+\infty]$ admits minimisers whenever $\tau>0$ is chosen independently of $\varepsilon>0$ to satisfy
	\[
	\tau \le \frac{1}{2c_2 \, C_{d,\alpha}}.
	\]
\end{lem}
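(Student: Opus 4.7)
The plan is to establish existence via the direct method of the calculus of variations applied to the functional $J[\rho] := \frac{d_W^2(\rho,\bar\rho)}{2\tau} + \mf^\varepsilon[\rho]$. First I would verify that $\inf_{\rho \in \mptrd} J[\rho]$ is finite. Boundedness from above is easy (take any smooth compactly supported probability density of finite $\mf^\varepsilon$-energy as a competitor). Boundedness from below is where both assumption \ref{ass:AGS-F} and the restriction on $\tau$ intervene: combining the estimate~\eqref{eq:low-bdd-fveps} from~\Cref{lem:lower-bound-F} with~\Cref{rem:mom-ineq} (to control $m_2(\rho)$ by $m_2(\bar\rho)$ and $d_W^2(\rho,\bar\rho)$) gives
\[
J[\rho] \ge \frac{d_W^2(\rho,\bar\rho)}{2\tau} - K(\bar\rho, \varepsilon, V_1) - c_2 C_{d,\alpha}\bigl(1+\varepsilon^2 m_2(V_1)+ 2m_2(\bar\rho) + 2d_W^2(\rho,\bar\rho)\bigr)^\alpha,
\]
where $K$ depends on the listed quantities but not on $\rho$. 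Using subadditivity $(a+b)^\alpha \le a^\alpha+b^\alpha$ since $\alpha \in (d/(d+2),1)$ peels off the $d_W^{2\alpha}$-term; the specific numerical threshold $\tau \le (2c_2 C_{d,\alpha})^{-1}$ then emerges after careful absorption of the sublinear contribution into the quadratic term $\tfrac{d_W^2(\rho,\bar\rho)}{2\tau}$, yielding a lower bound of the form $J[\rho] \ge -\tilde K + \kappa \, d_W^2(\rho,\bar\rho)$ with some $\kappa>0$ independent of $\varepsilon$.

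Next I would extract a minimiser. Given any minimising sequence $\{\rho^n\}\subset \mptrd$, the coercivity estimate just obtained forces $\sup_n d_W^2(\rho^n,\bar\rho)<+\infty$, and therefore $\sup_n m_2(\rho^n)<+\infty$ by~\Cref{rem:mom-ineq}. Uniform bounds on the second moment give tightness, so that Prokhorov's theorem yields a subsequence $\rho^{n_k}\rightharpoonup \rho^\star$ narrowly to some $\rho^\star\in\mP(\Rd)$; the lower semicontinuity of $m_2$ with respect to narrow convergence ensures $\rho^\star\in\mptrd$.

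Finally, I would pass to the limit by lower semicontinuity of both constituents of $J$. The narrow lower semicontinuity of $\rho \mapsto d_W^2(\rho,\bar\rho)$ is standard (\cite[Lemma 7.1.4]{AGS}). For $\mf^\varepsilon$, observe first that $\rho^{n_k}\rightharpoonup\rho^\star$ narrowly implies narrow convergence of the absolutely continuous measures $V_\varepsilon*\rho^{n_k}\, dx \rightharpoonup V_\varepsilon * \rho^\star\, dx$, since for any $\varphi\in C_b(\Rd)$ the function $\varphi * V_\varepsilon$ is again in $C_b(\Rd)$ by~\ref{ass:v1}. Combined with pointwise convergence $V_\varepsilon*\rho^{n_k}(x)\to V_\varepsilon*\rho^\star(x)$ and the uniform bound $\|V_\varepsilon*\rho^{n_k}\|_{L^\infty}\le \|V_\varepsilon\|_{L^\infty}$, this should yield $\mf^\varepsilon[\rho^\star]\le\liminf_k \mf^\varepsilon[\rho^{n_k}]$. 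Adding the two lower semicontinuities gives $J[\rho^\star]\le \liminf_k J[\rho^{n_k}]=\inf J$, so $\rho^\star$ is the desired minimiser.

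The main obstacle is the last lower semicontinuity step for $\mf^\varepsilon$: although $F$ is convex and lower semicontinuous and the integrands $V_\varepsilon*\rho^{n_k}$ are uniformly bounded, the integration is over $\Rd$ and $F$ may take negative values, so a naive application of Fatou's lemma is not permitted. The standard remedy is to decompose $F(s) = \hat F(s) - c_1 s - c_2 s^\alpha$ where $\hat F(s) := F(s)+c_1 s + c_2 s^\alpha\ge 0$ by the hypothesis on $F^-$ in~\ref{ass:AGS-F}. Fatou then applies to the non-negative integrand $\hat F(V_\varepsilon*\rho^{n_k})$, the linear term gives the constant $c_1$ (since $\|V_\varepsilon*\rho\|_{L^1}=1$), and the sublinear term $\int (V_\varepsilon*\rho^{n_k})^\alpha dx$ is treated by a Vitali-type argument: the uniform second moment bound together with the tail estimate used in the proof of~\Cref{lem:lower-bound-F} (via H\"older with $\tfrac{2\alpha}{1-\alpha}>d$) supplies the uniform integrability required to pass to the limit in this piece, hence giving its continuity rather than just lower semicontinuity.
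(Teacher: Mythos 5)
Your proposal is correct and shares the paper's skeleton: coercivity from the lower bound~\eqref{eq:low-bdd-fveps} of~\Cref{lem:lower-bound-F} combined with~\Cref{rem:mom-ineq}, uniform second moments along a minimising sequence, Prokhorov and narrow compactness, then lower semicontinuity of both terms. The genuine difference is the last step. The paper simply invokes the narrow lower semicontinuity of $\mf$ (hence of $\mf^\varepsilon[\rho]=\mf[V_\varepsilon*\rho]$, since $V_\varepsilon*\rho^{n_k}\,dx$ converges narrowly to $V_\varepsilon*\rho^\star\,dx$), which is recorded in~\Cref{rem:F-assumptions} and rests on convexity and superlinearity as in~\cite{AGS}. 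You instead prove lower semicontinuity of $\mf^\varepsilon$ by hand: decompose $F(s)=\hat F(s)-c_1s-c_2s^\alpha$ with $\hat F\ge 0$, apply Fatou to $\hat F(V_\varepsilon*\rho^{n_k})$ (legitimate, since $V_\varepsilon*\rho^{n_k}(x)\to V_\varepsilon*\rho^\star(x)$ pointwise by testing narrow convergence against $V_\varepsilon(x-\cdot)\in C_b(\Rd)$ and $\hat F$ is lower semicontinuous), and pass to the limit in the sublinear piece using the tail estimate from the proof of~\Cref{lem:lower-bound-F} together with the uniform bound $\|V_\varepsilon*\rho^{n_k}\|_{L^\infty}\le\|V_\varepsilon\|_{L^\infty}$ on balls. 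This buys a self-contained argument for the regularised functional that does not lean on the convexity of $F$ at the compactness stage, at the cost of more work than the paper's one-line citation.

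Two minor imprecisions, neither a gap. First, the threshold $\tau\le(2c_2C_{d,\alpha})^{-1}$ does not really ``emerge'' from your subadditivity/absorption of the $\alpha$-power: keeping the exponent $\alpha<1$, the sublinear term can be absorbed into $\tfrac{1}{2\tau}d_W^2(\rho,\bar\rho)$ for \emph{every} $\tau>0$, so your route actually proves more than the lemma asserts; the stated threshold comes from the paper's cruder linearisation $(1+x)^\alpha\le 1+x$, after which positivity of the coefficient of $d_W^2$ forces exactly $\tau\le(2c_2C_{d,\alpha})^{-1}$. Second, the preliminary check that the infimum is finite from above is unnecessary (the paper omits it): if the infimum were $+\infty$ the claim is vacuous, and in the JKO iteration finiteness is inherited from $\mf^\varepsilon[\rho_0]<+\infty$.
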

\begin{proof}
This proof follows the direct method.

\underline{Step 1 - Uniform lower bound:} We apply~\eqref{eq:low-bdd-fveps} from~\Cref{lem:lower-bound-F} to begin with
\begin{align*}
&\frac{d_W^2(\rho,\bar{\rho})}{2\tau} + \mf^\varepsilon[\rho]  \ge \frac{d_W^2(\rho,\bar{\rho})}{2\tau} - c_1 - c_2C_{d,\alpha}(1 + \varepsilon^2 m_2(V_1) + m_2(\rho))^\alpha.
\end{align*}
Using~\Cref{rem:mom-ineq}, we can replace $m_2(\rho)$ to further estimate
\begin{align*}
&\frac{d_W^2(\rho,\bar{\rho})}{2\tau} + \mf^\varepsilon[\rho]  \ge \frac{d_W^2(\rho,\bar{\rho})}{2\tau} - c_1 - c_2C_{d,\alpha} (1 + \varepsilon^2 m_2(V_1) + m_2(\bar{\rho}) +  d_W^2(\rho,\bar{\rho})),
\end{align*}
where we have also used $(1+|x|)^\alpha \le 1+|x|$ for $\alpha\in(0,1)$. Finally, the only dependence on $\rho$ is through $d_W^2(\rho,\bar{\rho})$. The coefficient of this term is positive when
\[
\frac{1}{2\tau} - c_2C_{d,\alpha}\ge 0 \iff 0 < \tau \le \frac{1}{2c_2 C_{d,\alpha}}.
\]
With this restriction on $\tau$, the lower bound reads
\[
\frac{d_W^2(\rho,\bar{\rho})}{2\tau} + \mf^\varepsilon[\rho]  \ge - c_1 - c_2C_{d,\alpha}(1 + \varepsilon^2m_2(V_1) + m_2(\bar{\rho})).
\]
\underline{Step 2 - The Direct Method:} From the previous step, we can find an infimising sequence $\rho_n\in\mptrd$ such that
\[
\frac{d_W^2(\rho_n,\bar{\rho})}{2\tau} + \mf^\varepsilon[\rho_n] \to \inf_{\mu\in\mptrd}\left\{ \frac{d_W^2(\mu,\bar{\rho})}{2\tau} + \mf^\varepsilon[\mu]\right\} >-\infty.
\]
As a convergent sequence, we also have
\[
\sup_{n\in\mathbb{N}}\left\{ \frac{d_W^2(\rho_n,\bar{\rho})}{2\tau} + \mf^\varepsilon[\rho_n]\right\} \le C < +\infty,
\]
for some constant $C>0$. By~\Cref{rem:mom-ineq}, the second moments are uniformly bounded by
\[
\sup_{n\in\mathbb{N}}m_2(\rho_n) \le 2m_2(\bar{\rho}) + C\tau.
\]
Therefore, by Prokhorov's theorem, there exists a subsequence, still labelled $\rho_n$, such that $\rho_n \rightharpoonup \rho$ (narrow convergence) for $\rho\in\mptrd$. Moreover, it is known (c.f.~\Cref{rem:F-assumptions}) that~\ref{ass:AGS-F} provides the required lower semicontinuity with respect to narrow convergence so that
\[
\frac{d_W^2(\rho,\bar{\rho})}{2\tau} + \mf^\varepsilon[\rho] \le \liminf_{n\to\infty}\left\{
\frac{d_W^2(\rho_n,\bar{\rho})}{2\tau} + \mf^\varepsilon[\rho_n]
\right\} = \inf_{\mu\in\mptrd}\left\{ \frac{d_W^2(\mu,\bar{\rho})}{2\tau} + \mf^\varepsilon[\mu]\right\}.
\]
\end{proof}
These results are enough to prove~\Cref{prop:en-ineq-mom-bound}.

\begin{lem}
\label{lem:bdd-comp-F-V}
Suppose $F:[0,+\infty) \to \R$ satisfies~\ref{ass:diff-F} and for $\varepsilon>0$, define $V_\varepsilon(x) = \varepsilon^{-d}V_1(x/\varepsilon)$ where $V_1$ satisfies~\ref{ass:v1}. Then, for any non-negative measure $\rho \in \mathcal{M}(\Rd)$ such that $\rho(\Rd) = C<+\infty$, we have the estimate 
\[
\|F'(V_\varepsilon*\rho(\cdot))\|_{L^\infty(\Rd)} \le \|F'(\cdot)\|_{L^\infty([0, \,C\|V_\varepsilon\|_{L^\infty}])}<+\infty.
\]
\end{lem}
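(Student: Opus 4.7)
The statement is essentially a pointwise range bound on the convolution $V_\varepsilon * \rho$ combined with continuity of $F'$ on the resulting compact interval, so the plan is purely elementary. First, I would establish the two-sided pointwise bound
\[
0 \le V_\varepsilon * \rho(x) \le C\|V_\varepsilon\|_{L^\infty}, \quad \forall x \in \Rd.
\]
The lower bound is immediate from the non-negativity of $V_1$ (hence of $V_\varepsilon$) asserted in~\ref{ass:v1} together with the non-negativity of $\rho$. The upper bound follows from
\[
V_\varepsilon * \rho(x) = \int_{\Rd} V_\varepsilon(x-y)\, d\rho(y) \le \|V_\varepsilon\|_{L^\infty} \, \rho(\Rd) = C\|V_\varepsilon\|_{L^\infty}.
\]
Note that $\|V_\varepsilon\|_{L^\infty} = \varepsilon^{-d}\|V_1\|_{L^\infty} < +\infty$ because~\ref{ass:v1} guarantees $V_1 \in C_b(\Rd)$.

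Next, I would invoke~\ref{ass:diff-F}, which asserts $F \in C^1([0,+\infty))$, so that $F'$ is continuous on $[0,+\infty)$ and therefore bounded on every compact subinterval. In particular,
\[
\|F'\|_{L^\infty([0,\,C\|V_\varepsilon\|_{L^\infty}])} < +\infty.
\]
Combining this with the range inclusion $V_\varepsilon * \rho(x) \in [0,\,C\|V_\varepsilon\|_{L^\infty}]$ established above immediately yields
\[
|F'(V_\varepsilon * \rho(x))| \le \|F'\|_{L^\infty([0,\,C\|V_\varepsilon\|_{L^\infty}])}, \quad \forall x \in \Rd,
\]
and taking the supremum over $x \in \Rd$ gives the claim.

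There is no real obstacle here; the only subtlety worth flagging is that the finiteness of the right-hand side depends on the two independent inputs: the boundedness of $V_1$ (from~\ref{ass:v1}, so that the endpoint $C\|V_\varepsilon\|_{L^\infty}$ is finite) and the continuity of $F'$ on $[0,+\infty)$ (from~\ref{ass:diff-F}, including at the origin). The bound is not uniform in $\varepsilon$, which is consistent with how the lemma is used in~\Cref{sec:nlie} and~\Cref{sec:compact}, where $\varepsilon$ is held fixed.
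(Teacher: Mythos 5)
Your proposal is correct and follows essentially the same argument as the paper: bound $V_\varepsilon*\rho(x)$ pointwise in $[0,\,C\|V_\varepsilon\|_{L^\infty}]$ using the non-negativity of $V_1$ and $\rho$ together with the total mass, then use continuity of $F'$ on $[0,+\infty)$ from~\ref{ass:diff-F} to bound it on that compact interval. The remarks on finiteness of $\|V_\varepsilon\|_{L^\infty}$ and non-uniformity in $\varepsilon$ are accurate but inessential additions.
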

\begin{proof}
We estimate the convolution of $V_\varepsilon$ against $\rho$ by
\begin{align*}
&\quad V_\varepsilon * \rho(x) = 
\int_{\Rd}V_\varepsilon(x-y)d\rho(y) \le C\|V_\varepsilon\|_{L^\infty}.
\end{align*}
Being $V_1\ge 0$ and $\rho$ a positive measure, the image of $V_\varepsilon* \rho(\cdot)$ is therefore contained in $[0,\, C\|V_\varepsilon\|_{L^\infty}]$. Being $F'$ continuous on $[0,+\infty)$, it is bounded on $[0,\, C\|V_\varepsilon\|_{L^\infty}]$, and the result is proved.
\end{proof}
\begin{lem}
	\label{lem:Lip-cty-conv}
For any $\varepsilon>0$, define $V_\varepsilon(x) = \varepsilon^{-d}V_1(x/\varepsilon)$ where $V_1$ satisfies~\ref{ass:v1}. Then, for any compact set $K\subset\Rd$ and $\rho\in\mprd$, there is a constant $C>0$ depending only on $\varepsilon, \, K, \, V_1$, and $m_1(\rho)$ such that
\[
|V_\varepsilon*\rho(x^1) - V_\varepsilon*\rho(x^2)| \le C |x^1 - x^2|, \quad \forall x^1,\, x^2\in K.
\]
Moreover, suppose $\rho^n\in\mprd$ is a sequence such that $\sup_{n\in\mathbb{N}}m_1(\rho^n) <+\infty$. Then, there is a subsequence of $\rho^n$ (still labelled $\rho^n$) such that $\rho^n \rightharpoonup \rho\in \mprd$ and the following limit holds
\[
\lim_{n\to\infty}\sup_{x\in K} |V_\varepsilon*\rho^n(x) - V_\varepsilon*\rho(x)| = 0.
\]
\end{lem}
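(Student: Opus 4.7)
The plan is to establish the Lipschitz estimate by a mean-value argument using the explicit growth of $\nabla V_1$ from~\ref{ass:v1}, and then upgrade the second part to uniform convergence on $K$ by combining tightness, narrow convergence, and equicontinuity.

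For the Lipschitz bound, I would write
\[
V_\varepsilon*\rho(x^1) - V_\varepsilon*\rho(x^2) = \int_\Rd \int_0^1 \nabla V_\varepsilon\bigl(tx^1 + (1-t)x^2 - y\bigr)\cdot (x^1 - x^2)\,dt\,d\rho(y),
\]
and control the gradient via $|\nabla V_\varepsilon(z)| = \varepsilon^{-d-1}|\nabla V_1(z/\varepsilon)| \le C(\varepsilon^{-d-1} + \varepsilon^{-d-2}|z|)$, thanks to the growth condition $|\nabla V_1(x)|\le C(1+|x|)$. For $x^1,x^2\in K\subset B_R$, the pointwise estimate $|tx^1+(1-t)x^2-y|\le R+|y|$ then yields, after integrating against $\rho$,
\[
|V_\varepsilon*\rho(x^1) - V_\varepsilon*\rho(x^2)| \le C(\varepsilon, V_1, R)\bigl(1 + m_1(\rho)\bigr)|x^1 - x^2|,
\]
which is precisely the desired Lipschitz bound, with constant depending only on $\varepsilon, K, V_1$, and $m_1(\rho)$.

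For the second part, the uniform first-moment bound $\sup_n m_1(\rho^n) \le M$ forces tightness via Markov's inequality, $\rho^n(\Rd\setminus B_R)\le M/R$, so Prokhorov's theorem delivers a subsequence (still denoted $\rho^n$) narrowly converging to some $\rho\in\mprd$. Since $y\mapsto V_\varepsilon(x-y)$ is continuous and bounded for each fixed $x$ (recall $V_1\in C_b(\Rd)$), narrow convergence gives the pointwise limit $V_\varepsilon*\rho^n(x)\to V_\varepsilon*\rho(x)$ for every $x\in K$. Meanwhile, the first part of the lemma applied to each $\rho^n$ — with the crucial observation that the Lipschitz constant depends on $\rho^n$ only through $m_1(\rho^n)$, which is uniformly bounded — shows that $\{V_\varepsilon*\rho^n\}_n$ is equi-Lipschitz on $K$ with constant independent of $n$.

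The last step is the standard fact that pointwise convergence of an equicontinuous family on a compact set upgrades to uniform convergence: for $\eta>0$, equicontinuity supplies $\delta>0$ with $|x-y|<\delta\Rightarrow |V_\varepsilon*\rho^n(x)-V_\varepsilon*\rho^n(y)|<\eta$ uniformly in $n$, a finite cover of $K$ by $\delta$-balls reduces the problem to pointwise convergence at finitely many centres, and the inherited equicontinuity of the limit finishes the $3\eta$ triangle inequality. I do not expect a genuine obstacle; the only point requiring care is that the constant in the Lipschitz estimate of part one must depend on $\rho$ solely through $m_1(\rho)$ (and not through any finer information), which is exactly what the explicit integration above provides.
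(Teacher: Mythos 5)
Your proposal is correct and takes essentially the same approach as the paper: the identical mean-value estimate using the growth bound $|\nabla V_1(x)|\le C(1+|x|)$ yields the Lipschitz constant depending on $\rho$ only through $m_1(\rho)$, and tightness from the uniform first moments plus narrow convergence tested against $V_\varepsilon(x-\cdot)\in C_b(\Rd)$ give the limit. The only immaterial difference is the final step: you upgrade pointwise to uniform convergence on $K$ via a finite $\delta$-net and equi-Lipschitz continuity, whereas the paper evaluates the difference at maximising points $x_K^n\in K$ and extracts a convergent subsequence, both arguments resting on the same two ingredients.
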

\begin{proof}
We use the mean-value form of Taylor's theorem to estimate
\begin{align*}
&\quad \left|
\int_{\Rd}(V_\varepsilon(x^1-y) - V_\varepsilon(x^2-y)) d\rho(y)
\right| \le |x^1-x^2|\int_{\Rd}\int_0^1\left|
\nabla V_\varepsilon(tx^1 + (1-t)x^2-y)
\right|dt \, d\rho(y) 	\\
&\le C_{\varepsilon, \,V_1}|x^1-x^2|\int_{\Rd}\left(1 + |x^1| + |x^2| + |y|\right) \, d\rho(y) \le C_{\varepsilon, \,V_1}C_K|x^1-x^2|(1 + m_1(\rho)).
\end{align*}
In the last line, the constant $C_{\varepsilon, \, V_1}$ comes from the linear growth assumption of $\nabla V_1$ scaled with $\varepsilon$, and the constant $C_K$ absorbs bounds for $|x^1|$ and $|x^2|$ since $x^1, \, x^2\in K$.

Turning to the sequence $\rho^n$, standard tightness arguments using the uniform first moment bound yield $\rho\in\mprd$ such that $\rho^n\rightharpoonup \rho$ along a subsequence. As for the uniform convergence over $K$ along a subsequence, properties of the mollification by $V_\varepsilon$ yield that
\[
x\mapsto |V_\varepsilon*\rho^n(x) - V_\varepsilon*\rho(x)|
\]
is a continuous function for every $n\in\mathbb{N}$. Therefore, for every $n\in\mathbb{N}$, there is $x_K^n\in K$ such that
\[
\sup_{x\in K} |V_\varepsilon*\rho^n(x) - V_\varepsilon*\rho(x)| = |V_\varepsilon*\rho^n(x_K^n) - V_\varepsilon*\rho(x_K^n)|.
\]
Since $K$ is compact, there is a subsequence, still labelled $x_K^n$, such that $x_K^n \to x_K\in K$ as $n\to \infty$. Along this subsequence, we continue the previous estimate
\begin{align*}
&\quad \sup_{x\in K} |V_\varepsilon*\rho^n(x) - V_\varepsilon*\rho(x)| = |V_\varepsilon*\rho^n(x_K^n) - V_\varepsilon*\rho(x_K^n)| 	\\
&\le |V_\varepsilon*\rho^n(x_K^n) - V_\varepsilon*\rho^n(x_K)| + |V_\varepsilon*\rho^n(x_K) - V_\varepsilon*\rho(x_K)| + |V_\varepsilon*\rho(x_K) - V_\varepsilon*\rho(x_K^n)|.
\end{align*}
The first and third differences can be made arbitrarily small by the previous Lipschitz estimate owing to $\sup_{n\in\mathbb{N}}m_1(\rho^n)<+\infty$. For the second difference, remember that $V_\varepsilon(\cdot) \in C_b(\Rd)$ so certainly $V_\varepsilon(x_K - \cdot) \in C_b(\Rd)$ and $\rho^n\rightharpoonup\rho$ in duality against $C_b$.
\end{proof}

\begin{cor}
\label{cor:comp-f-conv}
In the setting of~\Cref{lem:Lip-cty-conv}, suppose $F$ satisfies~\ref{ass:diff-F}. Then, the composition $F'(V_\varepsilon*\rho(\cdot))$ is uniformly continuous on $K$. Moreover, 
\[
\lim_{n\to\infty}\sup_{x\in K} |F'(V_\varepsilon*\rho^n(x)) - F'(V_\varepsilon*\rho(x))| = 0.
\]
\end{cor}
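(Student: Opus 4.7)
The plan is to combine the Lipschitz/uniform-convergence statements from~\Cref{lem:Lip-cty-conv} with the uniform continuity of $F'$ on a compact interval. The key observation is that, since $\rho^n$ and $\rho$ are probability measures and $V_\varepsilon \in C_b(\Rd)$, the values of $V_\varepsilon*\rho^n$ and $V_\varepsilon*\rho$ lie in the bounded interval $I_\varepsilon := [0,\, \|V_\varepsilon\|_{L^\infty}]$, uniformly in $n$ and in $x\in\Rd$; this is exactly the content of the bound used in~\Cref{lem:bdd-comp-F-V} with $C=1$. In particular, under~\ref{ass:diff-F}, the function $F'$ is continuous on the compact set $I_\varepsilon$, hence uniformly continuous there: for every $\eta>0$ there exists $\delta = \delta(\eta, \varepsilon, F) > 0$ such that $|F'(s_1) - F'(s_2)|<\eta$ whenever $s_1, s_2\in I_\varepsilon$ with $|s_1-s_2|<\delta$.

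First, I would verify the uniform continuity of $F'(V_\varepsilon * \rho(\cdot))$ on $K$. By~\Cref{lem:Lip-cty-conv}, $V_\varepsilon*\rho$ is Lipschitz on $K$ with some constant $L = L(\varepsilon, K, V_1, m_1(\rho))$. Therefore, given $\eta>0$, choosing $|x^1 - x^2| < \delta/L$ ensures $|V_\varepsilon*\rho(x^1) - V_\varepsilon*\rho(x^2)| < \delta$, and since both values lie in $I_\varepsilon$, the uniform continuity of $F'$ on $I_\varepsilon$ yields $|F'(V_\varepsilon*\rho(x^1)) - F'(V_\varepsilon*\rho(x^2))| < \eta$.

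Next, I would pass to the uniform convergence statement. The second part of~\Cref{lem:Lip-cty-conv} gives a subsequence (still labelled $\rho^n$) and a limit $\rho\in\mP_1(\Rd)$ with
\[
\lim_{n\to\infty}\sup_{x\in K} |V_\varepsilon*\rho^n(x) - V_\varepsilon*\rho(x)| = 0.
\]
Fix $\eta>0$ and let $\delta=\delta(\eta,\varepsilon,F)$ be the modulus of uniform continuity of $F'$ on $I_\varepsilon$. By the above uniform convergence, there is $N\in\mathbb{N}$ such that for every $n\ge N$ and every $x\in K$, $|V_\varepsilon*\rho^n(x) - V_\varepsilon*\rho(x)| < \delta$. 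Since both arguments lie in $I_\varepsilon$, uniform continuity of $F'$ gives
\[
\sup_{x\in K}|F'(V_\varepsilon*\rho^n(x)) - F'(V_\varepsilon*\rho(x))| < \eta, \qquad \forall n\ge N,
\]
which is the desired conclusion.

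There is no real obstacle here: the result is essentially a soft consequence of~\Cref{lem:Lip-cty-conv} combined with the trivial but crucial fact that the range of the convolutions is contained in a common compact interval, on which continuous functions are automatically uniformly continuous. The only point where one must be mildly careful is to apply~\Cref{lem:bdd-comp-F-V} uniformly in $n$, for which we use that each $\rho^n$ is a probability measure (so $C=1$), rather than just having uniformly bounded first moments.
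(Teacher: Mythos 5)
Your proposal is correct and follows essentially the same route as the paper: uniform continuity of $F'$ on the compact interval $[0,\|V_\varepsilon\|_{L^\infty}]$ (where all the convolutions take their values, since the measures are probability measures), combined with the Lipschitz estimate and the uniform convergence on $K$ from~\Cref{lem:Lip-cty-conv}. No gaps; the argument matches the paper's proof step for step.
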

\begin{proof}
Fix $\eta>0$, then since $F'\in C([0,\,+\infty))$, it is uniformly continuous on $[0, \, \|V_\varepsilon\|_{L^\infty}]$ so there exists $\delta_1>0$ such that
\begin{equation}
	\label{eq:F'-unif-cts}
\forall V^1, \, V^2 \in [0, \, \|V_\varepsilon\|_{L^\infty}], \quad |V^1 - V^2| < \delta_1 \implies |F'(V^1) - F'(V^2)| < \eta.
\end{equation}
Starting with the uniform continuity of $F'(V_\varepsilon* \rho(\cdot))$, for the $\delta_1$ in~\eqref{eq:F'-unif-cts}, define $\delta := \delta_1/C$ where $C>0$ is the Lipschitz constant in~\Cref{lem:Lip-cty-conv}. Then, whenever $x^1, \, x^2 \in K$ satisfy $|x^1 - x^2| < \delta$, the Lipschitz estimate from~\Cref{lem:Lip-cty-conv} implies
\[
|V_\varepsilon*\rho(x^1) - V_\varepsilon * \rho(x^2)| < \delta_1.
\]
As well, we can take $V^i = V_\varepsilon*\rho(x^i)$ for $i=1, \, 2$ in~\eqref{eq:F'-unif-cts} to conclude the uniform continuity. Note that $V^i \in [0, \, \|V_\varepsilon\|_{L^\infty}]$.

Turning to the limit, the uniform convergence from~\Cref{lem:Lip-cty-conv} gives the existence of $N\in\mathbb{N}$ such that for every $n\ge N$ and $x\in K$, we have
\[
|V_\varepsilon*\rho^n(x) - V_\varepsilon*\rho(x)| < \delta_1.
\]
We would like to apply~\eqref{eq:F'-unif-cts} and set $V^1 = V_\varepsilon*\rho^n(x)$ and $V^2 = V_\varepsilon*\rho(x)$. We need to verify that $V_\varepsilon*\rho^n(x), \, V_\varepsilon*\rho(x)\in [0, \, \|V_\varepsilon\|_{L^\infty}]$ for every $x \in K$ which is easily proven.
\end{proof}
\end{appendices}

\subsection*{Acknowledgements}
The authors would like to thank Martin Burger, David Gómez-Castro, and Giuseppe Savaré for helpful discussions on the topics of the manuscript. The authors were supported by the Advanced Grant Nonlocal-CPD (Nonlocal PDEs for Complex Particle Dynamics: Phase Transitions, Patterns and Synchronization) of the European Research Council Executive Agency (ERC) under the European Union’s Horizon 2020 research and innovation programme (grant agreement No. 883363). JAC was also partially supported by the EPSRC grant numbers EP/T022132/1 and EP/V051121/1. JW was supported by the Mathematical Institute Award at the University of Oxford.

\subsection*{Availability of data and materials}

Data sharing not applicable to this article as no datasets were generated or analysed during the current study.

\bibliography{main}

\def\cprime{$'$}
\begin{thebibliography}{10}

\bibitem{AGS}
L.~Ambrosio, N.~Gigli, and G.~Savar{\'e}.
\newblock {\em Gradient flows in metric spaces and in the space of probability
  measures}.
\newblock Lectures in Mathematics ETH Z\"urich. Birkh\"auser Verlag, Basel,
  second edition, 2008.

\bibitem{barenblatt1953one}
G.~Barenblatt.
\newblock On one class of solutions of the one-dimensional problem of
  non-stationary filtration of a gas in a porous medium.
\newblock {\em Prikl. Mat. i Mekh.}, 17:739--742, 1953.

\bibitem{BB2000}
J.-D. Benamou and Y.~Brenier.
\newblock {A computational fluid mechanics solution to the Monge--Kantorovich
  mass transfer problem}.
\newblock {\em Numer. Math.}, 84(3):375--393, jan 2000.

\bibitem{boi}
S.~Boi, V.~Capasso, and D.~Morale.
\newblock Modeling the aggregative behavior of ants of the species {\it
  {p}olyergus rufescens}.
\newblock volume~1, pages 163--176. 2000.
\newblock Spatial heterogeneity in ecological models (Alcal\'{a} de Henares,
  1998).

\bibitem{Brezis_Crandall_79}
H.~Br\'{e}zis and M.~G. Crandall.
\newblock Uniqueness of solutions of the initial-value problem for
  {$u_{t}-\Delta \varphi (u)=0$}.
\newblock {\em J. Math. Pures Appl. (9)}, 58(2):153--163, 1979.

\bibitem{BCM07}
M.~Burger, V.~Capasso, and D.~Morale.
\newblock On an aggregation model with long and short range interactions.
\newblock {\em Nonlinear Anal. Real World Appl.}, 8(3):939--958, 2007.

\bibitem{BE22}
M.~Burger and A.~Esposito.
\newblock Porous medium equation and cross-diffusion systems as limit of
  nonlocal interaction.
\newblock {\em Nonlinear Anal.}, 235:Paper No. 113347, 30, 2023.

\bibitem{calvez2006volume}
V.~Calvez and J.~A. Carrillo.
\newblock {Volume effects in the Keller--Segel model: energy estimates
  preventing blow-up}.
\newblock {\em Journal de Math{\'e}matiques Pures et Appliqu{\'e}es},
  86(2):155--175, 2006.

\bibitem{CCH14}
J.~A. Carrillo, Y.-P. Choi, and M.~Hauray.
\newblock The derivation of swarming models: mean-field limit and wasserstein
  distances.
\newblock In {\em Collective dynamics from bacteria to crowds}, pages 1--46.
  Springer, 2014.

\bibitem{Patacchini_blob19}
J.~A. Carrillo, K.~Craig, and F.~S. Patacchini.
\newblock A blob method for diffusion.
\newblock {\em Calc. Var. Partial Differential Equations}, 58(2):Paper No. 53,
  53, 2019.

\bibitem{CCY19}
J.~A. Carrillo, K.~Craig, and Y.~Yao.
\newblock Aggregation-diffusion equations: dynamics, asymptotics, and singular
  limits.
\newblock In {\em Active particles. {V}ol. 2. {A}dvances in theory, models, and
  applications}, Model. Simul. Sci. Eng. Technol., pages 65--108.
  Birkh\"{a}user/Springer, Cham, 2019.

\bibitem{CDDW20}
J.~A. Carrillo, M.~G. Delgadino, L.~Desvillettes, and J.~Wu.
\newblock The landau equation as a gradient flow.
\newblock 2022.

\bibitem{CDW22}
J.~A. Carrillo, M.~G. Delgadino, and J.~Wu.
\newblock Boltzmann to {L}andau from the gradient flow perspective.
\newblock {\em Nonlinear Anal.}, 219:Paper No. 112824, 49, 2022.

\bibitem{CDFFLS}
J.~A. Carrillo, M.~Di~Francesco, A.~Figalli, T.~Laurent, and D.~Slepcev.
\newblock Global-in-time weak measure solutions and finite-time aggregation for
  nonlocal interaction equations.
\newblock {\em Duke Math. J.}, 156(2):229--271, 2011.

\bibitem{JAC_DF_TOSCANI_ARMA_06}
J.~A. Carrillo, M.~Di~Francesco, and G.~Toscani.
\newblock Intermediate asymptotics beyond homogeneity and self-similarity: long
  time behavior for {$u_t=\Delta\phi(u)$}.
\newblock {\em Arch. Ration. Mech. Anal.}, 180(1):127--149, 2006.

\bibitem{CHWW20}
J.~A. Carrillo, J.~Hu, L.~Wang, and J.~Wu.
\newblock A particle method for the homogeneous {L}andau equation.
\newblock {\em J. Comput. Phys. X}, 7:100066, 24, 2020.

\bibitem{CJMTU}
J.~A. Carrillo, A.~J\"{u}ngel, P.~A. Markowich, G.~Toscani, and A.~Unterreiter.
\newblock Entropy dissipation methods for degenerate parabolic problems and
  generalized {S}obolev inequalities.
\newblock {\em Monatsh. Math.}, 133(1):1--82, 2001.

\bibitem{CMWreview}
J.~A. Carrillo, D.~Matthes, and M.-T. Wolfram.
\newblock Lagrangian schemes for {W}asserstein gradient flows.
\newblock In {\em Geometric partial differential equations. {P}art {II}},
  volume~22 of {\em Handb. Numer. Anal.}, pages 271--311.
  Elsevier/North-Holland, Amsterdam, [2021] \copyright 2021.

\bibitem{CMcCV03}
J.~A. Carrillo, R.~J. McCann, and C.~Villani.
\newblock Kinetic equilibration rates for granular media and related equations:
  entropy dissipation and mass transportation estimates.
\newblock {\em Revista Matematica Iberoamericana}, 19(3):971--1018, 2003.

\bibitem{CMcCV06}
J.~A. Carrillo, R.~J. McCann, and C.~Villani.
\newblock Contractions in the 2-{W}asserstein length space and thermalization
  of granular media.
\newblock {\em Arch. Ration. Mech. Anal.}, 179(2):217--263, 2006.

\bibitem{CM09}
J.~A. Carrillo and J.~S. Moll.
\newblock Numerical simulation of diffusive and aggregation phenomena in
  nonlinear continuity equations by evolving diffeomorphisms.
\newblock {\em SIAM J. Sci. Comput.}, 31(6):4305--4329, 2009/10.

\bibitem{CarrilloMurakawaCellAdhesion}
J.~A. Carrillo, H.~Murakawa, M.~Sato, H.~Togashi, and O.~Trush.
\newblock A population dynamics model of cell-cell adhesion incorporating
  population pressure and density saturation.
\newblock {\em Journal of Theoretical Biology}, 474:14--24, 2019.

\bibitem{CRW16}
J.~A. Carrillo, H.~Ranetbauer, and M.-T. Wolfram.
\newblock Numerical simulation of nonlinear continuity equations by evolving
  diffeomorphisms.
\newblock {\em J. Comput. Phys.}, 327:186--202, 2016.

\bibitem{CDHJ21}
L.~Chen, E.~S. Daus, A.~Holzinger, and A.~J\"{u}ngel.
\newblock Rigorous derivation of population cross-diffusion systems from
  moderately interacting particle systems.
\newblock {\em J. Nonlinear Sci.}, 31(6):Paper No. 94, 38, 2021.

\bibitem{C17}
K.~Craig.
\newblock Nonconvex gradient flow in the {W}asserstein metric and applications
  to constrained nonlocal interactions.
\newblock {\em Proc. Lond. Math. Soc. (3)}, 114(1):60--102, 2017.

\bibitem{Craig_blob2016}
K.~Craig and A.~L. Bertozzi.
\newblock A blob method for the aggregation equation.
\newblock {\em Math. Comp.}, 85(300):1681--1717, 2016.

\bibitem{blob_weighted_craig}
K.~Craig, K.~Elamvazhuthi, M.~Haberland, and O.~Turanova.
\newblock A blob method for inhomogeneous diffusion with applications to
  multi-agent control and sampling.
\newblock {\em Math. Comp.}, 92(344):2575--2654, 2023.

\bibitem{DalKen84}
B.~E.~J. Dahlberg and C.~E. Kenig.
\newblock Nonnegative solutions of the porous medium equation.
\newblock {\em Comm. Partial Differential Equations}, 9(5):409--437, 1984.

\bibitem{Daneri_Radici_Runa_JDE}
S.~Daneri, E.~Radici, and E.~Runa.
\newblock Deterministic particle approximation of aggregation-diffusion
  equations on unbounded domains.
\newblock {\em J. Differential Equations}, 312:474--517, 2022.

\bibitem{Degond_Mustieles90}
P.~Degond and F.-J. Mustieles.
\newblock A deterministic approximation of diffusion equations using particles.
\newblock {\em SIAM J. Sci. Statist. Comput.}, 11(2):293--310, 1990.

\bibitem{DFESPSCH21}
M.~Di~Francesco, A.~Esposito, and M.~Schmidtchen.
\newblock Many-particle limit for a system of interaction equations driven by
  {N}ewtonian potentials.
\newblock {\em Calc. Var. Partial Differential Equations}, 60(2):Paper No. 68,
  44, 2021.

\bibitem{DFF}
M.~Di~Francesco and S.~Fagioli.
\newblock Measure solutions for nonlocal interaction pdes with two species.
\newblock {\em Nonlinearity}, 26:2777--2808, 2013.

\bibitem{DNS09}
J.~Dolbeault, B.~Nazaret, and G.~Savar\'{e}.
\newblock A new class of transport distances between measures.
\newblock {\em Calc. Var. Partial Differential Equations}, 34(2):193--231,
  2009.

\bibitem{FigPhi2008}
A.~Figalli and R.~Philipowski.
\newblock Convergence to the viscous porous medium equation and propagation of
  chaos.
\newblock {\em ALEA Lat. Am. J. Probab. Math. Stat.}, 4:185--203, 2008.

\bibitem{gamba2003}
A.~Gamba, D.~Ambrosi, A.~Coniglio, A.~de~Candia, S.~Di~Talia, E.~Giraudo,
  G.~Serini, L.~Preziosi, and F.~Bussolino.
\newblock {Percolation, Morphogenesis, and Burgers Dynamics in Blood Vessels
  Formation}.
\newblock {\em Phys. Rev. Lett.}, 90(11):118101, 2003.

\bibitem{GosseToscani06}
L.~Gosse and G.~Toscani.
\newblock Lagrangian numerical approximations to one-dimensional
  convolution-diffusion equations.
\newblock {\em SIAM J. Sci. Comput.}, 28(4):1203--1227, 2006.

\bibitem{JKO98}
R.~Jordan, D.~Kinderlehrer, and F.~Otto.
\newblock The variational formulation of the {F}okker-{P}lanck equation.
\newblock {\em SIAM J. Math. Anal.}, 29(1):1--17, 1998.

\bibitem{Ka74}
A.~S. Kala\v{s}nikov.
\newblock The differential properties of the generalized solutions of equations
  of the type of nonstationary filtration.
\newblock {\em Vestnik Moskov. Univ. Ser. I Mat. Meh.}, 29(1):62--68, 1974.
\newblock Collection of articles dedicated to the memory of Ivan Georgievi\v{c}
  Petrovski\u{\i}. (Russian).

\bibitem{Landau}
E.~M. Lifshitz.
\newblock {\em Perspectives in theoretical physics}.
\newblock Pergamon Press, Oxford, 1992.
\newblock The collected papers of E. M. Lifshitz [E. M. Lifshits], Edited by L.
  P. Pitaevski\u{\i}, With an introduction by D. ter Haar, With a biography of
  Lifshitz by Ya. B. Zel'dovich and M. I. Kaganov, translated by J. B. Sykes.

\bibitem{Lions_MasGallic_2001}
P.-L. Lions and S.~Mas-Gallic.
\newblock Une m\'{e}thode particulaire d\'{e}terministe pour des \'{e}quations
  diffusives non lin\'{e}aires.
\newblock {\em C. R. Acad. Sci. Paris S\'{e}r. I Math.}, 332(4):369--376, 2001.

\bibitem{MMCS}
D.~Matthes, R.~McCann, and G.~Savar\'{e}.
\newblock A family of fourth order equations of gradient flow type.
\newblock {\em Comm. P.D.E.}, 34(11):1352--1397, 2009.

\bibitem{McC97}
R.~J. McCann.
\newblock A convexity principle for interacting gases.
\newblock {\em Advances in mathematics}, 128(1):153--179, 1997.

\bibitem{morale2005interacting}
D.~Morale, V.~Capasso, and K.~Oelschl{\"a}ger.
\newblock An interacting particle system modelling aggregation behavior: from
  individuals to populations.
\newblock {\em Journal of mathematical biology}, 50(1):49--66, 2005.

\bibitem{Oelschlger1990LargeSO}
K.~Oelschl{\"a}ger.
\newblock Large systems of interacting particles and the porous medium
  equation.
\newblock {\em Journal of Differential Equations}, 88(2):294--346, 1990.

\bibitem{Ol}
K.~Oelschl\"ager.
\newblock Large systems of interacting particles and the porous medium
  equation.
\newblock {\em J. Differential Equations}, 88(2):294--346, 1990.

\bibitem{oelschlaeger2001}
K.~Oelschl\"{a}ger.
\newblock A sequence of integro-differential equations approximating a viscous
  porous medium equation.
\newblock {\em Z. Anal. Anwendungen}, 20(1):55--91, 2001.

\bibitem{otto}
F.~Otto.
\newblock The geometry of dissipative evolution equations: the porous medium
  equation.
\newblock {\em Comm. Partial Differential Equations}, 26(1--2):101--174, 2001.

\bibitem{painter2002volume}
K.~Painter and T.~Hillen.
\newblock Volume-filling and quorum-sensing in models for chemosensitive
  movement.
\newblock {\em Canadian Applied Mathematics Quarterly}, 10(4):501--544, 2002.

\bibitem{Philipowski2007}
R.~Philipowski.
\newblock Interacting diffusions approximating the porous medium equation and
  propagation of chaos.
\newblock {\em Stochastic Process. Appl.}, 117(4):526--538, 2007.

\bibitem{RS}
R.~Rossi and G.~Savar\'{e}.
\newblock Tightness, integral equicontinuity and compactness for evolution
  problems in banach spaces.
\newblock {\em Ann. Sc. Norm. Super. Pisa Cl. Sci. (5)}, 2(2):395–--431,
  2003.

\bibitem{R88}
H.~L. Royden.
\newblock {\em Real analysis}.
\newblock Macmillan Publishing Company, New York, third edition, 1988.

\bibitem{Russo90}
G.~Russo.
\newblock Deterministic diffusion of particles.
\newblock {\em Comm. Pure Appl. Math.}, 43(6):697--733, 1990.

\bibitem{Sandier_Serfaty2004}
E.~Sandier and S.~Serfaty.
\newblock Gamma-convergence of gradient flows with applications to
  {G}inzburg-{L}andau.
\newblock {\em Comm. Pure Appl. Math.}, 57(12):1627--1672, 2004.

\bibitem{S}
F.~Santambrogio.
\newblock {\em Optimal Transport for Applied Mathematicians}, volume~87 of {\em
  Progress in Nonlinear Differential Equations and Their Applications}.
\newblock Birkh\"auser Verlag, Basel, 2015.

\bibitem{Serfaty_gammaconv_2011}
S.~Serfaty.
\newblock Gamma-convergence of gradient flows on {H}ilbert and metric spaces
  and applications.
\newblock {\em Discrete Contin. Dyn. Syst.}, 31(4):1427--1451, 2011.

\bibitem{TBL06}
C.~M. Topaz, A.~L. Bertozzi, and M.~A. Lewis.
\newblock A nonlocal continuum model for biological aggregation.
\newblock {\em Bull. Math. Biol.}, 68(7):1601--1623, 2006.

\bibitem{Vazquez92_PM_introduction}
J.~L. V\'{a}zquez.
\newblock An introduction to the mathematical theory of the porous medium
  equation.
\newblock In {\em Shape optimization and free boundaries ({M}ontreal, {PQ},
  1990)}, volume 380 of {\em NATO Adv. Sci. Inst. Ser. C: Math. Phys. Sci.},
  pages 347--389. Kluwer Acad. Publ., Dordrecht, 1992.

\bibitem{Vaz07}
J.~L. V{\'a}zquez.
\newblock {\em The porous medium equation}.
\newblock Oxford Mathematical Monographs. The Clarendon Press, Oxford
  University Press, Oxford, 2007.
\newblock Mathematical theory.

\bibitem{Veron_79}
L.~V\'{e}ron.
\newblock Effets r\'{e}gularisants de semi-groupes non lin\'{e}aires dans des
  espaces de {B}anach.
\newblock {\em Ann. Fac. Sci. Toulouse Math. (5)}, 1(2):171--200, 1979.

\bibitem{V1}
C.~Villani.
\newblock {\em Topics in optimal transportation}, volume~58 of {\em Graduate
  Studies in Mathematics}.
\newblock American Mathematical Society, Providence, RI, 2003.

\bibitem{V2}
C.~Villani.
\newblock {\em Optimal transport : old and new}.
\newblock Grundlehren der mathematischen Wissenschaften. Springer, Berlin,
  2009.

\bibitem{zel1950towards}
Y.~B. Zel’dovich and A.~Kompaneets.
\newblock Towards a theory of heat conduction with thermal conductivity
  depending on the temperature.
\newblock {\em Collection of Papers Dedicated to 70th Birthday of Academician
  AF Ioffe, Izd. Akad. Nauk SSSR, Moscow}, pages 61--71, 1950.

\end{thebibliography}
\bibliographystyle{abbrv}

\end{document}